\DeclarePairedDelimiter\floor{\lfloor}{\rfloor}
\theoremstyle{plain}
\newtheorem{theo}{Theorem}[section]
\newtheorem{lemm}{Lemma}[section]
\newtheorem{prop}{Proposition}[section]
\theoremstyle{definition}
\theoremstyle{remark}
\newtheorem{rema}{Remark}[section]
\numberwithin{equation}{section}
\newcommand{\R}{\mathbb{R}}
\newcommand{\D}{\mathbb{D}}
\newcommand{\N}{\mathbb{N}}
\newcommand{\HH}{\mathbb{H}}
\newcommand{\eps}{\varepsilon}
\newcommand{\ke}{\text{ker }}
\newcommand{\wt}{\widetilde}
\newcommand{\be}{\begin{equation}}
\newcommand{\ee}{\end{equation}}
\title[Boundary rigidity of negatively-curved asymptotically hyperbolic surfaces]{Boundary rigidity of negatively-curved asymptotically hyperbolic surfaces}
\author{Thibault Lefeuvre}
\address{Laboratoire de Mathématiques d’Orsay, Univ. Paris-Sud, CNRS, Université Paris-Saclay, 91405 Orsay, France}
\email{thibault.lefeuvre@u-psud.fr}
\begin{document}

\begin{abstract}
In the spirit of Otal \cite{jo} and Croke \cite{cc}, we prove that a negatively-curved asymptotically hyperbolic surface is \textit{boundary distance rigid}, where the distance between two points on the boundary at infinity is defined by a renormalized quantity.
\end{abstract}

\maketitle

\section{Introduction}

\subsection{Main result}

We consider $\overline{M}$ a smooth compact connected $n+1$-dimensional manifold with boundary. We say that $\rho : \overline{M} \rightarrow \R_+$ is a \textit{boundary defining function} on $\overline{M}$ if it is smooth and satisfies $\rho = 0$ on $\partial M$, $d\rho \neq 0$ on $\partial M$ and $\rho > 0$ on $M$. Let us fix such a function $\rho$. A metric $g$ on $M$ is said to be \textit{asymptotically hyperbolic} if
\begin{enumerate}
\item the metric $\overline{g} = \rho^2 g$ extends to a smooth metric on $\overline{M}$,
\item $|d\rho|_{\rho^2 g} = 1$ on $\partial M$.
\end{enumerate}
Note that these two properties are independent of the choice of $\rho$ because any other boundary function $\rho_0$ can be written $\rho_0 = e^f \rho$ and $\overline{g_0} = \rho_0^2 g = e^{2f} \rho^2 g$ also extends smoothly on $\partial M$ and satisfies on the boundary:
\[ \left| d(e^f \rho) \right|_{e^{2f} \rho^2 g} = e^{-f} \left| e^f d\rho \right|_{\rho^2 g} = |d\rho|_{\rho^2 g} = 1 \]
However, the extension of the metric $\rho^2 g$ on the boundary, that is $\rho^2 g|_{\partial M}$, is not independent of the choice of $\rho$ but its conformal class is. This conformal class of metrics on $\partial M$ is called the \textit{conformal infinity}. In the rest of the paper, $\overline{M}$ will be two-dimensional, so $\partial M$ will be one-dimensional and in this case, all the metrics are conformally equivalent. As a consequence, this statement is rather pointless but it takes another interest if the manifold has dimension superior or equal to three.

It can be proved that such a manifold admits a canonical product structure in the vicinity of the boundary $\partial M$ (see \cite{crg} for instance), that is, given a metric $h_0$ on $\partial M$, in the conformal infinity of $g$, there exists a smooth set of coordinates $(\rho, y)$ on $\overline{M}$ (where $\rho$ is a boundary defining function) such that $|d \rho|_{\rho^2 g} = 1$ in a neighborhood of $\partial M$ (and not only on $\partial M$), $\rho^2 g|_{T \partial M} = h$. Moreover, on a collar neighborhood near $\partial M$, the metric has the form
\be \label{eq:g} g = \dfrac{d \rho^2 + h(\rho)}{\rho^2}, ~~~ \text{on } (0, \eps) \times \partial M, \ee
for some $\eps > 0$ and where $h(\rho)$ is a smooth family of metrics on $\partial M$ such that $h(0) = h_0$. From this expression, one can prove that the sectional curvatures of $(M,g)$ all converge towards $-1$ as $\rho$ goes to $0$.

$M$ is not a compact manifold and the length of a geodesic $\alpha(x,x')$ joining two points $x$ and $x'$ on the boundary at infinity is clearly not finite. However, in \cite{ggsu}, a \textit{renormalized length} $L(\alpha(x,x'))$ for a geodesic $\alpha(x,x')$ is introduced, which roughly consists in the constant term in the asymptotic development of the length of $\alpha_\eps(x,x') := \alpha(x,x') \cap \left\{\rho \geq \eps\right\}$ as $\eps$ goes to $0$. This yields a new object characterized by the asymptotically hyperbolic manifold $(M,g)$ and one can actually wonder, as usual in inverse problem theory, up to what extent it conversely determines $(M,g)$. Notice that the renormalized length is not independent of the choice of the boundary defining function $\rho$, and thus, neither of the choice of the conformal representative $h_0$ in the conformal infinity.

From now on, we assume that $M$ is a surface with negative curvature. If $M$ is simply connected, then it is a well-known fact that there exists a unique geodesic between any pair of points $(x,x') \in \partial M \times \partial M$. The \textit{renormalized boundary distance} is defined as:
\[ D : \left| \begin{array}{l} \partial M \times \partial M \rightarrow \R \\ (x,x') \mapsto L(\alpha(x,x')) \end{array} \right., \]
where $L(\alpha(x,x'))$ denotes the renormalized length of the unique geodesic joining $x$ to $x'$. A preliminary version of our main theorem is the following result:
\begin{theo}
Assume $(M,g_1)$ and $(M,g_2)$ are two simply connected asymptotically hyperbolic surfaces of negative curvature. We suppose that for some choices $h_1$ and $h_2$ of conformal representatives in the conformal infinities of $g_1$ and $g_2$, the renormalized boundary distance agree for the two metrics, i.e. $D_1 = D_2$. Then, there exists a smooth diffeomorphism $\Phi : \overline{M} \rightarrow \overline{M}$ such that $\Phi^*g_2 = g_1$ on $M$ and $\Phi|_{\partial M} = \text{Id}$.
\end{theo}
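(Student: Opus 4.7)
The plan is to adapt Otal's strategy from \cite{jo}---a cross-ratio construction on the boundary at infinity---rather than Croke's direct intersection argument from \cite{cc}. Because each $(M,g_i)$ is simply connected and negatively curved, every geodesic of $g_i$ is uniquely determined by its pair of ideal endpoints, so endpoint-matching provides a canonical bijection $\alpha_1(x,x')\longleftrightarrow\alpha_2(x,x')$ between the two geodesic spaces, and the hypothesis $D_1=D_2$ says that paired geodesics have the same renormalized length. I would first introduce an ideal cross-ratio
\[
[x_1,x_2,x_3,x_4]_i := D_i(x_1,x_3)+D_i(x_2,x_4)-D_i(x_1,x_4)-D_i(x_2,x_3),
\]
which is independent of $i$ by assumption. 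In the asymptotically hyperbolic setting this quantity should carry the familiar Busemann-function meaning: it measures the signed offset between two synchronized parametrizations of asymptotic geodesics, taken relative to a horocycle at a common endpoint. This is the mechanism by which boundary data propagates into the interior.

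Using this, I would build the diffeomorphism $\Phi : M \to M$ geodesic by geodesic. For $p\in M_1$, pick any two $g_1$-geodesics $\alpha_1(x,x')$ and $\alpha_1(y,y')$ through $p$ and set $\Phi(p):=\alpha_2(x,x')\cap\alpha_2(y,y')$. Negative curvature guarantees that two non-asymptotic geodesics meet at most once, so this is at most a single point. The delicate step is independence of the chosen pair: one must show that if a third $g_1$-geodesic $\alpha_1(z,z')$ also passes through $p$, then the three $g_2$-geodesics $\alpha_2(x,x')$, $\alpha_2(y,y')$, $\alpha_2(z,z')$ concur at the same point. This is precisely what the equality of cross-ratios is used for in Otal's proof, and the same formalism should carry through here, provided the Busemann interpretation of the bracket above is rigorously established.

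Once $\Phi$ is well-defined, I would verify it is a smooth diffeomorphism on $M$: smoothness in the interior follows from the transverse intersection of $g_2$-geodesics together with the smoothness of the geodesic flows and an implicit function argument, and the fact that $\Phi|_{\partial M} = \mathrm{Id}$ comes from the construction (paired geodesics share endpoints) together with the chosen normalizations $h_1,h_2$. In dimension two, a boundary-fixing diffeomorphism that carries the geodesic foliation of $g_1$ to that of $g_2$ with matching renormalized parametrizations automatically satisfies $\Phi^*g_2=g_1$.

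The hardest part will be the cross-ratio step: extracting from the equality $D_1 = D_2$ a synchronization between geodesic parametrizations that is robust enough to force well-definedness of $\Phi$, even though the naive ``distances'' $d(p,x)$ and $d(p,x')$ from an interior point to a boundary point are both infinite and only finite \emph{differences} are meaningful. A secondary difficulty is the smoothness of $\Phi$ up to $\partial M$: near the ideal boundary the metric degenerates according to \eqref{eq:g}, so one must carefully exploit the collar normal form and the asymptotic expansion of the truncated geodesics $\alpha_i^\eps(x,x')$ as $\eps \to 0$ in order to control $\Phi$ uniformly and conclude that $\Phi$ extends as a smooth diffeomorphism of $\overline{M}$.
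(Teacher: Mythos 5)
Your overall architecture coincides with the paper's: the bracket $[x_1,x_2,x_3,x_4]_i$ you introduce is exactly the Liouville current of a box of geodesics (the paper's Lemma \ref{lem:long} proves $\eta(E\times F)=L(x_1,x_3)+L(x_2,x_4)-L(x_2,x_3)-L(x_1,x_4)$ via the horosphere/Busemann truncation you allude to), so $D_1=D_2$ does give $\eta_1=\eta_2$, and the map $\Phi$ is indeed built by intersecting the endpoint-matched $g_2$-geodesics, with the isometry property recovered from $d(p,q)=\tfrac12\eta(\mathcal{F}(p,q))$.

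The genuine gap is at the step you flag as "precisely what the equality of cross-ratios is used for in Otal's proof, and the same formalism should carry through." Equality of Liouville currents does \emph{not} directly force three concurrent $g_1$-geodesics to have concurrent $g_2$-partners; in Otal's argument this is the entire content of the theorem, obtained through the angle of deviation $f(x,\xi,\theta)$, its $\pi$-symmetry and superadditivity (Gauss--Bonnet applied to the image triangle in negative curvature), and the averaged Jensen inequality $\int_0^\pi J(\Theta(\theta))\sin\theta\,d\theta\le\int_0^\pi J(\theta)\sin\theta\,d\theta$, which together force the average angle $\Theta$ to be the identity and hence every image triangle to degenerate. You have not sketched any of this machinery, and, more importantly, it does not "carry through" verbatim here: the average over $S^*M$ diverges because the volume is infinite, so one must work with truncated averages $\Theta_\eps$ over $S^*M_\eps=\{\rho\ge\eps\}$, first normalize the metrics so that $g_1=g_2+\mathcal{O}(\rho^\infty)$ (using the boundary jet determination of Graham--Guillarmou--Stefanov--Uhlmann) in order to control the $\mathcal{O}(\eps^\infty)$ mismatch between the two families of truncated geodesics near infinity, and then prove a quantitative version of Otal's lemma with explicit stability exponents, since the $\Theta_\eps$ are not known to be uniformly Lipschitz. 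These truncation and stability estimates are the paper's actual new content, and without them the well-definedness of $\Phi$ --- and hence the whole proof --- remains unestablished.
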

In the terminology of \cite{ggsu}, such surfaces are called \textit{simple}: this definition naturally extends the notion of a simple manifold (compact manifold with boundary such that the exponential map is a diffeomorphism at each point) to the non-compact setting.

We are actually able to deal with the case of negatively-curved surfaces with topology. The natural object one has to consider this time is the \textit{renormalized marked boundary distance}. In this case, given two points $(x,x') \in \partial M \times \partial M$, there exists a unique geodesic in each homotopy class $[\gamma] \in \mathcal{P}_{x,x'}$ of curves joining $x$ to $x'$ ($\mathcal{P}_{x,x'}$ being the set of homotopy classes). We define
\[\mathcal{D} := \left\{ (x,x',[\gamma]), (x,x') \in \partial M \times \partial M \setminus \text{diag}, [\gamma] \in \mathcal{P}_{x,x'} \right\},\]
and introduce the renormalized marked boundary distance $D$ as:
\be \label{eq:ren} D : \left| \begin{array}{l} \mathcal{D} \rightarrow \R \\ (x,x',[\gamma]) \mapsto L(\alpha(x,x')) \end{array} \right., \ee
where $\alpha(x,x')$ is the unique geodesic in $[\gamma]$ joining $x$ to $x'$ and $L$ the renormalized length. Our main result is the following:

\begin{theo}
\label{th}
Assume $(M,g_1)$ and $(M,g_2)$ are two asymptotically hyperbolic surfaces of negative curvature. We suppose that for some choices $h_1$ and $h_2$ of conformal representatives in the conformal infinities of $g_1$ and $g_2$, the renormalized marked boundary distance agree for the two metrics, i.e. $D_1 = D_2$. Then, there exists a smooth diffeomorphism $\Phi : \overline{M} \rightarrow \overline{M}$ such that $\Phi^*g_2 = g_1$ on $M$ and $\Phi|_{\partial M} = \text{Id}$.
\end{theo}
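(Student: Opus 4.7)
The strategy is to follow the arguments of Otal and Croke for negatively curved surfaces, adapted to the asymptotically hyperbolic setting. Two main ingredients have to be put in place: first, a boundary determination step in which one recovers the Taylor jet of the metric at $\partial M$ from the renormalized marked boundary distance; second, a conjugacy step on the universal cover in which one uses negative curvature to promote the equality of renormalized lengths to a global isometry.

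The first step is to analyze the asymptotic expansion of the renormalized length $L(\alpha(x,x'))$ as $(x,x') \in \partial M \times \partial M$ approach each other. Working in the canonical collar form $g_i = \rho^{-2}(d\rho^2 + h_i(\rho))$ of \eqref{eq:g}, this expansion should have coefficients which are universal polynomials in the Taylor coefficients of $h_i(\rho)$ at $\rho = 0$. Matching the expansions for $g_1$ and $g_2$ and solving the resulting triangular system inductively should yield $h_1 \equiv h_2$ to infinite order at $\partial M$; in particular, the two conformal representatives agree on the boundary and the two metrics have the same infinite jet there.

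Next, I would lift to the universal cover $\widetilde{M}$ and use the marking to identify the Gromov boundaries of $(\widetilde{M}, \widetilde{g}_1)$ and $(\widetilde{M}, \widetilde{g}_2)$ with a common topological circle carrying the $\pi_1(M)$-action. Negative curvature guarantees that a geodesic is determined by its pair of endpoints at infinity, so the equality of renormalized marked distances yields a $\pi_1(M)$-equivariant bijection $\Psi$ between the spaces of oriented geodesics that intertwines the two geodesic flows once they are reparametrized by renormalized arc length. The boundary jet matching of the previous step allows one to choose horospherical references so that the renormalized distance between two compact transversals coincides with the genuine $g_i$-distance up to a common additive constant. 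Otal's geometric argument at a pair of geodesics crossing transversally at an interior point, together with strict negativity of the curvature, then forces $\Psi$ to be induced by a diffeomorphism $\Phi : M \to M$ with $\Phi^* g_2 = g_1$; the jet matching ensures that $\Phi$ extends smoothly to $\overline{M}$ with $\Phi|_{\partial M} = \mathrm{Id}$.

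The principal obstacle I foresee is the boundary jet recovery: one must expand the length of a geodesic in a non-compact geometry and verify that the triangular system linking the jet of $h(\rho)$ to the expansion of $L$ is non-degenerate at each order. The uniform convergence of the sectional curvatures to $-1$ near $\partial M$ gives a perturbative comparison with the hyperbolic model, but the bookkeeping is delicate. A secondary difficulty is controlling the asymptotic behavior of $\Psi$ as geodesics approach the boundary at infinity, which is required to promote the interior isometry to one smooth up to $\partial M$ with the claimed boundary behavior.
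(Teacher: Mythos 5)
Your outline correctly identifies the two pillars --- determination of the infinite jet of the metric at $\partial M$, followed by an Otal-type argument on the universal cover via the Liouville current --- and the first pillar is indeed available (the paper simply invokes \cite[Theorem 2]{ggsu} to reduce to $g_1 = g_2 + \mathcal{O}(\rho^\infty)$ after a boundary-fixing diffeomorphism, rather than re-deriving the triangular system you sketch). The genuine gap is in the second pillar, where you assert that ``Otal's geometric argument \ldots then forces $\Psi$ to be induced by a diffeomorphism.'' Otal's argument does not go through as stated: it requires averaging the angle of deviation $f(x,\xi,\theta)$ against the Liouville measure over the \emph{entire} unit cotangent bundle and then applying his rigidity lemma for increasing, $\pi$-symmetric, superadditive functions satisfying a Jensen-type inequality. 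Here $S^*M$ has infinite Liouville volume, so one is forced to work with truncated averages $\Theta_\eps$ over $S^*M_\eps = S^*M \cap \left\{\rho \geq \eps\right\}$; the truncation destroys the exact Jensen inequality (the correspondence between $g_1$- and $g_2$-geodesic chords of $M_\eps$ is only approximate, cf.\ Figure \ref{fig:defect}) and, more seriously, the equicontinuity of the family $\Theta_\eps$: because of the trapped set one cannot show a priori that the $\Theta_\eps$ are uniformly Lipschitz, which is exactly what Otal's original lemma consumes.

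The proof is therefore dominated by quantitative work that your proposal does not contain: the decay of the non-escaping mass function on $M_\eps$ (Lemma \ref{lem:escape}), Jacobi-field estimates giving exponential bounds on $\partial_\theta f$ and on the derivative of the exit time (Lemmas \ref{lem:df} and \ref{lem:dl}), the resulting H\"older-with-loss modulus $|\Theta_\eps(\theta_1)-\Theta_\eps(\theta_2)| \lesssim \eps^{-\beta'}|\theta_1-\theta_2|^{\beta} + \eps^{\delta}$ (Lemma \ref{lem:esttheta}), a Jensen inequality with an $\mathcal{O}(\eps^N)$ error (Lemmas \ref{lem:est4} and \ref{lem:convbien}), and finally a stability version of Otal's lemma (Proposition \ref{prop:ot2}) that converts these approximate hypotheses into $\Theta_\eps = \text{Id} + \mathcal{O}(\eps^{\gamma})$. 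Without a substitute for this chain of estimates you cannot conclude that the angle of deviation is additive, hence cannot saturate Gauss--Bonnet and produce $\Phi$. A smaller omission: you flag but do not resolve the smoothness of $\Phi$ up to $\partial M$; the paper settles this by an explicit analysis of the renormalized flow $\overline{\varphi}_\tau$ near the boundary in Section \ref{sect:end}.
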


This result can be seen as an analogue of \cite[Theorem 2]{gm} for the case of asymptotically hyperbolic surfaces. It is very likely that one can relax the assumption in Theorem \ref{th} so that only one of the two metrics has negative curvature (but still a hyperbolic trapped set). In the usual terminology, Theorem \ref{th} roughly says that an asymptotically hyperbolic surface with negative curvature is \textit{marked boundary distance rigid} among the class of surfaces having negative curvature. 

This result follows in spirit the ones proved independently by Otal \cite{jo} and Croke \cite{cc} establishing that two negatively-curved closed surfaces with same marked length spectrum are isometric. More recently, Guillarmou and Mazzucchelli \cite{gm} extended Otal's result to the case of two surfaces with strictly convex boundary without conjugate points and a trapped set of zero Liouville measure, one being of negative curvature. In both cases, the central object of interest is the \textit{Liouville current} $\eta$, which is the natural projection of the Liouville measure $\mu$ (initially defined on the unit tangent bundle $SM$) on the set of geodesics $\mathcal{G}$ of the manifold. Our arguments follow in principle the layout of proof of these articles, but we need to address new issues caused by the loss of the compactness assumption. The crucial step in our proof to deal with the infinite ends of the manifold is a version of Otal's lemma (see \cite[Lemma 8]{jo}) with a stability estimate (Proposition \ref{prop:ot2}). To the best of our knowledge, this bound had never been stated before in the literature.

As far as we know, this is also the first boundary rigidity result obtained in a non-compact setting. There is a long history of results regarding the boundary rigidity question on simple manifolds in the compact setting. We here mention the contributions of Gromov \cite{gr}, for regions of $\R^n$, the original paper of Michel \cite{rm} for subdomains of the open hemisphere and the Besson-Courtois-Gallot theorem \cite{bcg}, which implies the boundary rigidity for regions of $\HH^n$ (see also the survey of Croke \cite{cr2}). In the case of a manifold with trapping, the first general results where obtained by Guillarmou-Mazzucchelli \cite{gm} for surfaces, where the local boundary rigidity was established under suitable assumptions. Global boundary rigidity theorems have also recently been obtained by Stefanov-Uhlmann-Vasy \cite{suv} for simply connected non-positively curved manifolds with strictly convex boundary. Let us eventually mention that boundary rigidity questions appear naturally in the physics literature concerning the AdS/CFT duality and holography (see \cite{pr}, \cite{clms})

\subsection{Outline of the proof}

Section \ref{sect:geo} formally introduces the notion of renormalized length for a geodesic. We heavily rely on the cautious study made in \cite{ggsu} of the geodesic flow near the boundary at infinity. We also provide some examples of computations, and study the action of isometries on the renormalized length. In Section \ref{sect:liouv}, we recall the definition of the Liouville current $\eta$ on the space of geodesics of the universal cover $\widetilde{M}$ and prove that if the renormalized marked lengths agree, then the Liouville currents agree, juste like in the compact setting.

Section \ref{sect:dev} is devoted to the construction of an application of deviation $\kappa$. Like in \cite{jo}, we introduce \textit{the angle of deviation} $f$ between the two metrics on the universal cover $\widetilde{M}$. The idea is to make use of Gauss-Bonnet formula, in order to prove that this angle is the identity. This requires to introduce an \textit{average angle of deviation}. Since we are in a non-compact setting, technical issues arise from the fact that the volume is infinite. In particular, we need to consider this average (denoted by $\Theta_\eps$) on compact domains $\left\{\rho\geq\eps\right\}$ parametrized by $\eps$ and to study their limit as $\eps \rightarrow 0$.

Because of the possible existence of a \textit{trapped set}, we are unable to prove a priori that the averages $\Theta_\eps$ are $\mathcal{C}^1$ (or at least uniformly Lipschitz), which would truly simplify the proof. A cautious analysis of the derivative of the angle of deviation $f$ is needed to deal with these technical complications. Combined with a version of Otal's lemma with an estimate (see Proposition \ref{prop:ot2}), this allows to conclude that the average angle of deviation is the identity in the limit $\eps \rightarrow 0$, which itself implies that the angle of deviation $f$ is the identity.

We then conclude the proof by constructing a natural application $\Phi$ which is an isometry between $(M,g_1)$ and $(M,g_2)$. Eventually, a last difficulty comes from the fact that it is not immediate that the isometry obtained is $\mathcal{C}^\infty$ down to the boundary of $\overline{M}$.

If the reader is familiar with Otal's proof \cite{jo}, he will morally see the same features appear, but the novelty here is that we are able to deal with the asymptotic ends of the manifold. The price we have to pay is that this requires to compute tedious estimate in the limit $\eps \rightarrow 0$

\subsection{Acknowledgments} We thank Colin Guillarmou for suggesting this result and fruitful discussions.

\section{Geometric preliminaries}

\label{sect:geo}

This section is not specific to the two-dimensional case, so we state it in full generality. $(M,g)$ is only assumed to be an $n+1$-dimensional asymptotically hyperbolic manifold. In our setting, it will be more convenient to work on the unit cotangent bundle rather than on the unit tangent bundle, using the construction of Melrose \cite{me} of b-bundles.

\subsection{Geometry on the unit cotangent bundle}

\subsubsection{The b-cotangent bundle}

\label{sssect:b}

We define the unit cotangent bundle by
\be S^*M := \left\{(x,\xi) \in T^*M, x \in M, \xi \in T_x^*M, |\xi|^2_g = 1\right\}, \ee
and by $\pi_0 : S^*M \rightarrow M$ the projection on the base. The geodesic flow $(\varphi_t)_{t \in \R}$ is induced by the Hamiltonian vector field $X$, obtained from the Hamiltonian $H(x,\xi) = \frac{1}{2}|\xi|^2_g$, that is $\omega(X,\cdot)+dH=0$, where $\omega$ is the canonical symplectic form on $T^*M$. $\lambda$ will denote the canonical $1$-form such that $\omega = d\lambda$ and $\lambda(X)=1$. Note that $X$ is tangent to $S^*M$  insofar as a point $(x,\xi) \in T^*M$ flows by $X$ in the hypersurfaces $\left\{H=\text{cst}\right\}$. We will denote by $\flat : TM \rightarrow T^*M$ the Lagrange transform, that is the musical isomorphism between these two vector bundles and by $\sharp : T^*M \rightarrow TM$ its inverse. We stress that we will often drop the notation of these isomorphisms and identify (without mentioning it) a vector with its dual covector.

There exists a canonical splitting of $T(S^*M)$ according to:
\be T(S^*M) = \mathcal{H} \oplus \mathcal{V}, \ee
where $\mathcal{V} := \ke d\pi$ is the vertical bundle and $\mathcal{H} := \ke \mathcal{K}$ is the horizontal bundle. $\mathcal{K}$ is the connection map, defined such that, for $\zeta \in T(S^*M)$, $\mathcal{K}(\zeta) \in T_{\pi(\zeta)}M$ is the only vector such that the local geodesic $t \mapsto \gamma(t) \in SM$ starting from $(\pi(\zeta),\mathcal{K}(\zeta))$ satisfies $\dot{\gamma}(0) = \zeta^{\sharp}$ (see \cite{p} for a reference). The metric $g$ on $M$ induces a natural metric $G$ on $S^*M$, called the \textit{Sasaki metric} and defined by:
\be G(\xi,\zeta) := g(d\pi(\xi),d\pi(\zeta)) + g(\mathcal{K}(\xi),\mathcal{K}(\zeta)) \ee

Recall from \cite{me} that the \textit{b-tangent bundle} ${}^b T\overline{M}$ is defined to be the smooth vector bundle whose sections are vectors fields tangent to $\partial M$. Let $V$ be a smooth vector field on $M$. If $(\rho,y_1,...,y_n)$ denotes smooth local coordinates in a vicinity of $\partial M$, we can identify $V$ with the derivation
\[ V = a \partial_\rho + \sum_i b_i \partial_{y_i}, \]
for some smooth functions $a, b_i$. If $V$ vanishes on the boundary, then $a|_{\partial M}=0$, and we can write $a = \rho \alpha$ for some smooth function $\alpha$. In other words, in coordinates, $(\rho\partial_\rho,\partial_{y_i})$ is a basis for ${}^b T\overline{M}$ ($\rho \partial_\rho$ is a derivation down to the boundary $\left\{\rho=0\right\}$). Now, $\rho\partial_\rho$ is well defined on $\partial M$, independently of the choice of coordinates in a neighborhood of $\partial M$. Indeed, if $(\rho',y')$ denotes another choice of coordinates, then one can write $\rho' = \rho A(\rho,y), y'_i = Y_i(\rho,y)$ for some smooth functions such that $A(0,0)>0$, $J_{Y_i} \neq 0$ (where $J_{Y_i}$ denotes the Jacobian of $Y_i$) and one has
\[ \rho \partial_{\rho} = \left(1 + \dfrac{\rho}{A}\right) \rho' \partial_{\rho'} + \dfrac{\rho'}{A} \sum_j  \partial_\rho (Y_j) \partial_{y'_j}, \]
that is, both vector fields agree on the boundary $\left\{\rho'=0 \right\}$ as elements of ${}^{b} T \overline{M}$.

The \textit{b-cotangent bundle} ${}^{b} T^* \overline{M}$ is the vector bundle of linear forms on ${}^{b} T \overline{M}$. In coordinates, $(\rho^{-1} d\rho, dy_i)$ forms a basis of ${}^{b} T^* \overline{M}$ and $\rho^{-1} d\rho$ on $\partial M$ (the covector associated to $\rho \partial_\rho$) is independent of any choice of coordinates (and of the metric $g$). There are natural natural coordinates $(\rho,y,\xi =\xi_0 d\rho + \sum_{i} \eta_i d{y_i})$ on $T^*\overline{M}$ and we introduce on ${}^{b} T^* \overline{M}$ the smooth coordinates $(x,\xi) = (\rho,y,\overline{\xi_0},\eta)$, where $\xi_0=\overline{\xi_0} \rho^{-1}$, that is $\xi = \overline{\xi_0} \rho^{-1} d\rho + \sum_{i} \eta_i d{y_i}$. In particular, we see from the previous discussion that the function $\xi \mapsto \overline{\xi}_0$ on ${}^{b} T^* \overline{M}|_{\partial M}$ is intrinsic to the manifold, as well as the two subsets $\left\{\overline{\xi}_0 = \pm 1\right\}$ of ${}^{b} T^* \overline{M}|_{\partial M}$ (they do not depend on the choice of coordinate $(\rho,y)$, not even on the metric $g$).

As mentioned in the introduction, given a choice $h$ of conformal representative in the conformal infinity, there exist local coordinates $(\rho,y)$ in a vicinity of the boundary so that the metric has the form
\[ g = \dfrac{d \rho^2 + h_\rho}{\rho^2}, ~~~ \text{on } (0, \eps) \times \partial M, \]
Note that given $\xi = \overline{\xi_0} \rho^{-1} d\rho + \sum_{i} \eta_i d{y_i}$ in the b-cotangent bundle, one has:
\[ |\xi|^2_g = \overline{\xi}_0^2+\rho^2 |\eta|^2_{h_\rho}, \]
where, here, $h_\rho$ actually denotes the dual metric on $T^*\partial M$. We denote by:
\[ \overline{S^*M} = \left\{(x,\xi) \in {}^{b} T^* \overline{M}, |\xi|^2_g = 1\right\} \]
One has for $x \in \overline{M}$:
\[ \overline{S_x^*M} = \left\{(x,\xi) \in {}^{b} T^* \overline{M}, \overline{\xi}_0^2+\rho^2 |\eta|^2_{h_\rho} = 1\right\}\]
As a consequence, there is a splitting:
\[ \overline{S^*M} = S^*M \sqcup \partial_- S^*M \sqcup \partial_+ S^*M, \]
where $\partial_\pm S^*M = \left\{ (x,\xi), x \in \partial M, \overline{\xi_0}=\mp 1\right\}$ (which are independent of any choice). We see $\partial_- S^*M$ (resp. $\partial_+ S^*M$) as the \textit{incoming} (resp. \textit{outcoming}) boundary.

\begin{lemm}{\cite[Lemma 2.1]{ggsu}}
\label{lem:xbar}
There exists a smooth vector field $\overline{X}$ on $\overline{S^*M}$ which is transverse to the boundary $\partial \overline{S^*M} = \partial_- S^*M \sqcup \partial_+ S^*M$ and satisfies $X = \rho \overline{X}$ on $S^*M$. Moreover, in a vicinity of $\partial M$, one has $\overline{X} = \partial_\rho + \rho Y$, for some smooth vector field $Y$ on $\overline{S^*M}$.
\end{lemm}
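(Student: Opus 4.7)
The plan is to carry out a direct computation in the b-coordinates introduced above. In the local product coordinates $(\rho, y)$ provided by the normal form \eqref{eq:g} and their canonical lift $(\rho, y, \overline{\xi}_0, \eta)$ to ${}^{b}T^*\overline{M}$, the Hamiltonian of the geodesic flow reads
$$H = \tfrac{1}{2}|\xi|_g^2 = \tfrac{1}{2}\left(\overline{\xi}_0^{\,2} + \rho^2 |\eta|_{h_\rho}^{2}\right),$$
which is manifestly smooth on ${}^{b}T^*\overline{M}$ up to $\partial M$. The whole singularity of $g$ at the boundary is now hidden in the symplectic form: since $\overline{\xi}_0 = \rho \xi_0$ gives $d\xi_0 = \rho^{-1} d\overline{\xi}_0 - \rho^{-2}\overline{\xi}_0\, d\rho$, one obtains
$$\omega = d\rho \wedge d\xi_0 + \sum_i dy_i \wedge d\eta_i = \rho^{-1}\, d\rho \wedge d\overline{\xi}_0 + \sum_i dy_i \wedge d\eta_i.$$

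I would then solve $\iota_X \omega = dH$ by matching coefficients in the basis $(d\rho, d\overline{\xi}_0, dy_i, d\eta_i)$. The $d\overline{\xi}_0$-equation yields the $\partial_\rho$-component $\rho\overline{\xi}_0$; the $d\rho$-equation yields a $\partial_{\overline{\xi}_0}$-component of size $O(\rho^2)$ (the $\rho^{-1}$ in $\omega$ being tamed by the factor $\rho^2$ standing in front of $|\eta|^2_{h_\rho}$ inside $H$); the $dy_i$ and $d\eta_i$ equations yield $\partial_{\eta_i}$- and $\partial_{y_i}$-components that are also $O(\rho^2)$. Explicitly,
$$X = \rho \overline{\xi}_0\, \partial_\rho + \rho^2 h_\rho^{ij} \eta_j\, \partial_{y_i} - \rho^2 \!\left(|\eta|^2_{h_\rho} + \tfrac{\rho}{2}\partial_\rho |\eta|^2_{h_\rho}\right)\partial_{\overline{\xi}_0} - \tfrac{\rho^2}{2}\partial_{y_i}|\eta|^2_{h_\rho}\, \partial_{\eta_i}.$$
Every term is divisible by $\rho$, so setting $\overline{X} := \rho^{-1} X$ produces the desired smooth vector field on a neighborhood of $\partial M$ in $\overline{S^*M}$. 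On the interior, $X$ is intrinsically defined on $S^*M$ and $\rho > 0$, so $\rho^{-1} X$ extends to a globally smooth vector field on $\overline{S^*M}$ with no patching issue.

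For transversality, the constraint cutting out $\overline{S^*M}$ inside ${}^{b}T^*\overline{M}$ reads $\overline{\xi}_0^{\,2} + \rho^2 |\eta|^2_{h_\rho} = 1$, which at $\rho = 0$ forces $\overline{\xi}_0 = \pm 1$ on $\partial_{\mp} S^*M$. From the explicit formula $\overline{X}|_{\partial_{\mp} S^*M} = \pm\partial_\rho$, which is indeed transverse to $\partial\overline{S^*M}$. Finally, near $\partial_- S^*M$ we can solve $\overline{\xi}_0 = \sqrt{1 - \rho^2|\eta|^2_{h_\rho}} = 1 + O(\rho^2)$, so the $\overline{\xi}_0 \partial_\rho$ term rewrites as $\partial_\rho + O(\rho^2)\partial_\rho$, and all remaining terms of $\overline{X}$ are already of size $O(\rho)$; this yields $\overline{X} = \partial_\rho + \rho Y$ with $Y$ a smooth vector field (and analogously with $-\partial_\rho$ near $\partial_+ S^*M$). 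The only substantive input beyond bookkeeping is the availability of the normal form \eqref{eq:g}: it is exactly the absence of $d\rho \cdot dy_i$ cross terms in the metric that guarantees the coefficients computed above are genuinely smooth in $(\rho, y, \overline{\xi}_0, \eta)$ up to $\{\rho = 0\}$.
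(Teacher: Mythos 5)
The paper does not actually prove this lemma --- it is quoted from \cite[Lemma 2.1]{ggsu} --- so there is no internal proof to compare against; your direct computation of the Hamiltonian vector field in the b-coordinates $(\rho,y,\overline{\xi}_0,\eta)$ is correct (the coefficients $\rho\overline{\xi}_0$, $\rho^2 h_\rho^{ij}\eta_j$, etc.\ all carry the factor of $\rho$, and $d\rho(\overline{X})=\overline{\xi}_0=\pm1$ at the boundary gives transversality) and is essentially the argument of the cited reference. The only caveats are cosmetic: the normal form $\overline{X}=\partial_\rho+\rho Y$ holds with that sign only near $\partial_-S^*M$ (near $\partial_+S^*M$ the leading term is $-\partial_\rho$), as you note, and your closing remark slightly overstates the role of the product structure --- smoothness of $\overline{g}=\rho^2g$ up to $\partial M$ already makes every coefficient of $X$ divisible by $\rho$ even with $d\rho\cdot dy_i$ cross terms; the normal form is what pins down the leading term as exactly $\overline{\xi}_0\,\partial_\rho$.
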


The flow on $\overline{S^*M}$ induced by $\overline{X}$ will be denoted by $\overline{\varphi}_\tau$. For $z \in S^*M$, one has $\varphi_t(z) = \overline{\varphi}_\tau(z)$, for
\be \label{eq:xbar} t(\tau,z) = \int_0^\tau \dfrac{1}{\rho(\overline{\varphi}_s(z))} ds \ee

\subsubsection{Trapped set} \label{sssect:trapped}

The results of the following paragraph can be found in \cite[Section 2.1]{ggsu}. For $\eps > 0$ small enough, the compact surfaces $M_\eps := M \cap \left\{\rho\geq\eps\right\}$ are strictly convex with respect to the geodesic flow.

\begin{lemm}{\cite[Lemma 2.3]{ggsu}}
\label{lem:conv}
There exists $\eps > 0$ small enough so that for each $(x,\xi) \in S^*M$ with $\rho(x) < \eps$, $\xi = \xi_0 d\rho + \sum_{i=1}^{n-1} \xi_i dy_i$ and $\xi_0 \leq 0$, the flow trajectory $\varphi_t(x,\xi)$ converges to some point $z_+ \in \partial_+S^*M$ with rate $\mathcal{O}(e^{-t})$ as $t \rightarrow + \infty$ and $\rho(\varphi_t(x,\xi)) \leq \rho(x,\xi)$ for all $t \geq 0$. The same result holds with $\xi_0 \geq 0$ and negative time, with limit point $z_- \in \partial_-S^*M$.
\end{lemm}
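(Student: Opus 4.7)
The plan is to move into the b-framework of Lemma \ref{lem:xbar}, where the reparametrized flow $\overline{\varphi}_\tau$ generated by $\overline{X} = \rho^{-1} X$ is smooth down to $\partial \overline{S^*M}$ and transverse to $\partial_\pm S^*M$. In the b-coordinates $(\rho, y, \overline{\xi}_0, \eta)$ with $\overline{\xi}_0 = \rho \xi_0$, the energy shell $S^*M$ is cut out by $\overline{\xi}_0^2 + \rho^2 |\eta|^2_{h_\rho} = 1$ (where $h_\rho$ denotes the dual metric on $T^*\partial M$), and the hypothesis $\xi_0 \leq 0$ translates to $\overline{\xi}_0 \leq 0$. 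The whole analysis will then take place on the compact manifold with boundary $\overline{S^*M}$, where the ODE defined by $\overline{X}$ is well-behaved.

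First I would derive, from $\overline{X} = \rho^{-1} X$ and the Hamilton equations of $H = \tfrac{1}{2}\rho^2(\xi_0^2 + |\eta|^2_{h_\rho})$, the two identities
\[ \overline{X}\rho = \overline{\xi}_0, \qquad \overline{X}\overline{\xi}_0 = -\rho|\eta|^2_{h_\rho} - \tfrac{\rho^2}{2}\partial_\rho |\eta|^2_{h_\rho}, \]
the $-1/\rho$ singularity that naively appears in $\overline{X}\overline{\xi}_0$ being absorbed via the energy constraint $\rho^2(\xi_0^2 + |\eta|^2_{h_\rho}) = 1$. For $\eps$ small enough the correction $\tfrac{\rho^2}{2}\partial_\rho|\eta|^2_{h_\rho}$ is dominated by the main term $-\rho|\eta|^2_{h_\rho}$ throughout $\{\rho < \eps\}$, producing $\overline{X}\overline{\xi}_0 \leq 0$. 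Combined with $\overline{\xi}_0 \leq 0$ at the initial point, this gives $\overline{X}\rho = \overline{\xi}_0 \leq 0$ along the whole trajectory, so that both $\overline{\xi}_0$ and $\rho$ are monotonically nonincreasing under $\overline{\varphi}_\tau$. The latter already yields the claim $\rho(\varphi_t(x,\xi)) \leq \rho(x)$ for all $t \geq 0$.

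Next I would argue that $\overline{\varphi}_\tau(x,\xi)$ reaches $\partial \overline{S^*M}$ at some finite $\tau$-time $\tau_+$: if it stayed inside $\{\rho \geq \rho^\infty\}$ for some $\rho^\infty > 0$, then on this compact region the constraint forces $|\eta|^2_{h_\rho}$ to stay bounded below by a positive constant (unless $\overline{\xi}_0 \to -1$, which via $\overline{X}\rho = \overline{\xi}_0$ drives $\rho$ to $0$ anyway), giving $\overline{X}\overline{\xi}_0 \leq -c < 0$ eventually and forcing $\overline{\xi}_0 < -1$, an absurdity. Since $\overline{\xi}_0 \leq 0$ throughout, the landing point lies in $\partial_+ S^*M = \{\overline{\xi}_0 = -1\}$; call it $z_+$. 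Smoothness and transversality of $\overline{X}$ at $\partial_+ S^*M$ (Lemma \ref{lem:xbar}) then give
\[ \overline{\varphi}_\tau(x,\xi) - z_+ = \mathcal{O}(\tau_+ - \tau), \qquad \rho(\overline{\varphi}_\tau(x,\xi)) \sim \tau_+ - \tau \quad \text{as } \tau \to \tau_+. \]

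Finally I would translate this back to the geodesic time via \eqref{eq:xbar}: since $\rho(\overline{\varphi}_\tau) \sim \tau_+ - \tau$, the integral
\[ t(\tau) = \int_0^\tau \rho^{-1}(\overline{\varphi}_s(x,\xi))\, ds \sim -\log(\tau_+ - \tau) \quad \text{as } \tau \to \tau_+, \]
diverges, so that the full half-line $t \in [0,+\infty)$ is parametrized by $\tau \in [0, \tau_+)$ with $\tau_+ - \tau(t) = \mathcal{O}(e^{-t})$; composing with the smoothness bound above yields $\varphi_t(x,\xi) - z_+ = \mathcal{O}(e^{-t})$. The main technical obstacle is to make the bootstrap rigorous: one must choose $\eps$ small enough that the remainder $\tfrac{\rho^2}{2}\partial_\rho|\eta|^2_{h_\rho}$ is uniformly dominated by $-\rho|\eta|^2_{h_\rho}$, and must handle the degenerate direction $|\eta|_{h_\rho} \to 0$ separately — but in that direction $\overline{\xi}_0$ is already close to $\pm 1$ and the trajectory is effectively purely radial, so no genuine difficulty arises. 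The symmetric statement for $\xi_0 \geq 0$ and $t \to -\infty$ follows by reversing time.
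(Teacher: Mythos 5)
The paper does not prove this lemma: it is quoted verbatim from \cite[Lemma 2.3]{ggsu}, and your argument is a correct reconstruction of the proof given there — the same Hamiltonian computation in b-coordinates yielding $\overline{X}\rho=\overline{\xi}_0$ and $\overline{X}\overline{\xi}_0=-\rho|\eta|^2_{h_\rho}(1+\mathcal{O}(\rho))\leq 0$, the resulting monotonicity and bootstrap, finite $\tau$-time arrival at $\partial_+S^*M$ by transversality, and the reparametrization \eqref{eq:xbar} converting $\rho\sim\tau_+-\tau$ into the $\mathcal{O}(e^{-t})$ rate. The only point you leave implicit is that the $(y,\eta)$-components stay bounded and converge as $\tau\to\tau_+$ (so that the trajectory really lands at a single point $z_+$ where the smooth flow box applies); this follows since their $\tau$-derivatives are $\mathcal{O}(\rho)$ times quantities controlled on the energy shell, hence integrable up to $\tau_+$.
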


We define the \textit{tails} $\Gamma_\pm$: they consist of the points in $S^*M$ which are respectively trapped in the past or in the future:
\be S^*M \setminus \Gamma_\mp := \left\{ z \in S^*M, \rho(\varphi_t(z))_{t \rightarrow \pm \infty} \rightarrow 0 \right\} \ee
The \textit{trapped set} $K$ is defined by:
\be K := \Gamma_+ \cap \Gamma_- \ee
In particular, in negative curvature, the trapped set has zero Liouville measure. We can define the exit and enter maps $B_\pm : S^*M \setminus \Gamma_\mp \rightarrow \partial_\pm S^*M$ such that \be B_\pm(z) := \lim_{t \rightarrow \pm \infty} \varphi_t(z)\ee
These are smooth, well-defined map and they extend smoothly to $\overline{S^*M} \setminus \overline{\Gamma_\mp}$, where $\overline{\Gamma_\mp}$ is the closure of $\Gamma_\mp$ in $\overline{S^*M}$ (see \cite[Corollary 2.5]{ggsu}). There also exist smooth functions $\tau_\pm : \overline{S^*M} \setminus \overline{\Gamma_\mp} \rightarrow \R_\pm$ defined such that:
\be \overline{\varphi}_{\tau_\pm(z)}(z) = B_\pm(z) \in \partial_\pm S^*M \ee
Using the vector field $\overline{X}$, another way of describing the sets $\overline{\Gamma_\pm}$ is
\be \overline{\Gamma_\pm} = \left\{z \in \overline{S^*M}, \tau_\mp(z) = \pm \infty \right\} \ee
The \textit{scattering map} is the smooth map $\sigma : \partial_- S^*M \setminus \overline{\Gamma_-} \rightarrow \partial_+ S^*M \setminus \overline{\Gamma_+}$ defined by:
\be \label{eq:scat} \sigma(z) := B_+(z) = \overline{\varphi}_{\tau_+(z)}(z) \ee

\subsubsection{Hyperbolic splitting in negative curvature}

We assume in this section that $(M,g)$ is two-dimensional and has negative curvature $\kappa < 0$. Since the curvature at infinity converges towards $-1$, we know that $\kappa$ is pinched between two constants $-k_0^2 \leq \kappa < -k_1^2 < 0$. It is a classical fact that the geodesic flow on such a surface is Anosov (see \cite{pe}, \cite{wk}) in the sense that there exists some constants $C > 0$ and $\nu > 0$ (depending on the metric $g$) such that for all $z = (x,\xi) \in S^*M$, there is a continuous flow-invariant splitting
\be \label{eq:split} T_z(S^*M) = \R X(z) \oplus E_u(z) \oplus E_s(z), \ee
where $E_s(z)$ (resp. $E_u(z)$) is the \textit{stable} (resp. \textit{unstable}) vector space in $z$, which satisfy
\be \begin{array}{c} |d\varphi_t(z) \cdot \xi|_{\varphi_t(z)} \leq C e^{-\nu t} |\xi|_{z}, ~~ \forall t > 0, \xi \in E_s(z) \\
|d\varphi_t(z) \cdot \xi|_{\varphi_t(z)} \leq C e^{-\nu |t|} |\xi|_{z}, ~~ \forall t < 0, \xi \in E_u(z)\end{array} \ee
The norm, here, is given in terms of the Sasaki metric. The bundles $z \mapsto E_u(z),E_s(z)$ are (Hölder) continuous everywhere on $S^*M$. Moreover, the differential of the geodesic flow is governed uniformly by an exponential growth (see \cite[Chapter 3]{ro}) in the sense that there exists (other) constants $C, k > 0$ such that:
\be \label{eq:growth} |d\varphi_t(z) \cdot \xi|_{\varphi_t(z)} \leq C e^{k t} |\xi|_{z}, ~~ \forall t > 0, \forall \xi \in T_z(S^*M) \ee

\begin{wrapfigure}[11]{r}{5cm}
\label{fig:esc}
\includegraphics[width=7cm]{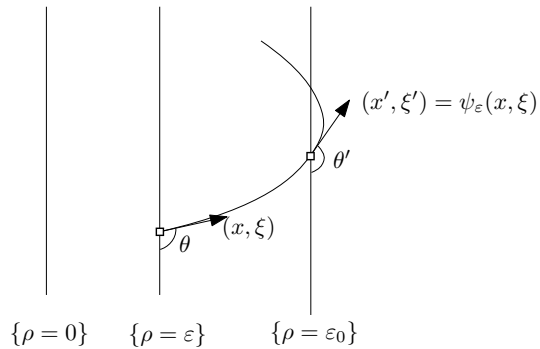}
\caption{The diffeomorphism $\psi_\eps$ in the proof of Lemma \ref{lem:escape}}
\end{wrapfigure}

Let us  now fix $\eps > 0$ small enough and consider $M_\eps := M \cap \left\{\rho\geq\eps\right\}$. Like in \cite{gu}, we define the \textit{non-escaping mass function} $V_\eps(T)$ of the domain $M_\eps$ by $V_\eps(T) := \mu\left(\left\{z \in S^*M_\eps, \forall s \in [0,T], \varphi_s(z) \in S^*M_\eps\right\}\right)$.
Since the trapping set is hyperbolic, there exists a constant $Q < 0$ such that $Q := \limsup_{T \rightarrow +\infty} \dfrac{\ln(V_\eps(T))}{T}$. Note that this constant is independent of $\eps$ (see \cite[Proposition 2.4]{gu}). In the rest of this paragraph, we fix some $\eps_0 > 0$ small enough. For $0 < \eps < \eps_0$, we want to link explicitly the decay of the non-escaping mass function $V_\eps$ to $V_{\eps_0}$.

\begin{lemm}
\label{lem:escape}
Let $\delta \in (Q,0)$. There exists a constant $C > 0$ and an integer $\wt{N} \in \N^*$, such that for all $T \geq -\wt{N} \ln(\eps)$,
\[ V_\eps(T) \leq C\eps^{-(1+4\delta)}e^{-\delta T} \]
\end{lemm}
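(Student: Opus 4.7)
\emph{Proof plan.} I would compare $V_\eps(T)$ against $V_{\eps_0}$ at a shifted time via a smooth self-diffeomorphism $\psi_\eps$ of $\overline M$. Take $\psi_\eps:\overline M\to\overline M$ (see Figure~\ref{fig:esc}) to be the identity outside $\{\rho\leq 2\eps_0\}$ and, in boundary product coordinates $(\rho,y)$, of the form $\psi_\eps(\rho,y)=(\phi_\eps(\rho),y)$ with $\phi_\eps:[\eps,2\eps_0]\to[\eps_0,2\eps_0]$ smooth and monotone, $\phi_\eps(\eps)=\eps_0$, and $\phi_\eps=\mathrm{id}$ near $\rho=2\eps_0$. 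I would lift to $\Psi_\eps:S^*M_\eps\to S^*M_{\eps_0}$ via the cotangent action and $g$-renormalization on fibres. Because $g=(d\rho^2+h_\rho)/\rho^2$ in the collar, the Jacobian of $\psi_\eps$ with respect to the $g$-volume has size $\phi_\eps'(\rho)(\rho/\phi_\eps(\rho))^{n+1}$, which is $O(\eps^2)$ at $\rho=\eps$ when $n+1=2$. Including the fibre factor yields a pointwise comparison
\[\mu(A)\ \leq\ C\,\eps^{-1}\,\mu\bigl(\Psi_\eps(A)\bigr),\qquad A\subset S^*M_\eps,\]
which accounts for the $\eps^{-1}$ factor in the final bound.

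The second step is a direct orbit analysis, since $\psi_\eps$ is not an isometry and $\Psi_\eps$ does not conjugate the geodesic flows. For $z\in V_\eps(T)$, I would decompose its $g$-orbit on $[0,T]$ into bulk pieces (inside $\{\rho\geq 2\eps_0\}$) and collar excursions (inside $\{\eps\leq\rho<2\eps_0\}$). Asymptotic hyperbolicity and the explicit local form $g=(d\rho^2+h_\rho)/\rho^2$ imply that a single non-escaping excursion with minimum depth $\rho_{\min}\geq\eps$ has geometric length at most $2\log(2\eps_0/\rho_{\min})+O(1)\leq 2|\log\eps|+C$; this bound is exact for the flat hyperbolic cylinder, with the $O(1)$ error absorbing the smooth variation $\rho\mapsto h_\rho$. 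Strict convexity from Lemma~\ref{lem:conv} forces each non-escaping excursion to return transversally through $\partial M_{\eps_0}$ in a single round-trip, and the uniform growth estimate~\eqref{eq:growth} applied to the tangent flow, together with the Anosov splitting~\eqref{eq:split}, bounds the number of effective excursions per surviving orbit by a constant. Hence the total collar time on a surviving orbit is at most $\Delta\leq 4|\log\eps|+C'$, and $\Psi_\eps(V_\eps(T))$ sits inside an enlargement of the $S^*M_{\eps_0}$-survival set of duration $T-\Delta$.

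To conclude, the definition of $Q$ yields $T_0,C_0>0$ such that $V_{\eps_0}(T')\leq C_0 e^{\delta T'}$ for all $T'\geq T_0$, and I would pick $\wt N\in\N^*$ so that the hypothesis $T\geq-\wt N\log\eps$ forces $T-\Delta\geq T_0$ uniformly in $\eps\in(0,\eps_0)$. Chaining the two steps gives
\[V_\eps(T)\ \leq\ C\,\eps^{-1}\,V_{\eps_0}(T-\Delta)\ \leq\ C'\,\eps^{-1}\,e^{\delta(T-\Delta)},\]
and absorbing $e^{-\delta\Delta}=\eps^{4\delta}\cdot O(1)$ into the $\eps$-prefactor recovers the form of the bound stated in the lemma. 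The main obstacle I anticipate is the second step: because $\psi_\eps$ is not an isometry, the collar time must be controlled by the direct dynamical argument above, balancing the per-excursion length bound (from the asymptotic hyperbolic form of $g$) against a uniform bound on the number of effective excursions (from Lemma~\ref{lem:conv} together with the Anosov hypotheses~\eqref{eq:split} and~\eqref{eq:growth}).
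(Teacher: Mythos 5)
There is a genuine gap at the heart of Step 2. The radial compression $\Psi_\eps$ is a static diffeomorphism that does not intertwine the geodesic flow with anything: for $z$ in the survival set $\{w:\varphi_s(w)\in S^*M_\eps,\ \forall s\in[0,T]\}$, the point of its orbit that actually lies in the $M_{\eps_0}$--survival set of duration $T-\Delta$ is $\varphi_{t_1}(z)$, where $t_1$ is the first entrance time into $\{\rho\ge\eps_0\}$ — not $\Psi_\eps(z)$, which in general has no dynamical relation to the orbit of $z$. So the claimed inclusion of $\Psi_\eps(V_\eps(T))$ in an (unquantified) ``enlargement'' of the $M_{\eps_0}$--survival set is exactly the point that needs proof, and as stated it does not hold. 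Two secondary problems: (i) the bound on the number of collar excursions does not come from the Anosov splitting or from \eqref{eq:growth} — those control the differential of the flow, not the itinerary of a single orbit. The correct (and much simpler) reason is convexity: by Lemma \ref{lem:conv}, an orbit entering $\{\rho<\eps_0\}$ from the bulk has $\overline{\xi}_0\le 0$ there and never returns, so it must cross $\{\rho=\eps\}$; hence a surviving orbit has at most one initial and one final collar segment, each of length $O(|\ln\eps|)$. (ii) The Jacobian bookkeeping is off: with $d\mathrm{vol}_g=\rho^{-2}\,d\rho\,d\mathrm{vol}_{h_\rho}$ on a surface and an angular fibre measure that is $\eps$-independent, the pointwise comparison at $\rho=\eps$ is of size $\eps^{-2}$, not $\eps^{-1}$, and ``including the fibre factor'' does not repair this.

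Your route is genuinely different from the paper's, and it is worth saying what each buys. The paper applies Santalò's formula, so that $V_\eps(T)$ becomes a boundary integral of $(l_{\eps_0,+}-(T-2T_\eps))_+$, and the comparison map between $\partial_-S^*M_\eps$ and $\partial_-S^*M_{\eps_0}$ is the flow $\overline\varphi_\tau$ itself; exit times then transform correctly by construction, the factor $\eps^{-1}$ comes from the density of $d\mu_{\nu,\eps}$, and $\eps^{-4\delta}$ from the time shift $2T_\eps=-4\ln\eps$. Your orbit decomposition can be salvaged without $\Psi_\eps$ at all: since $\mu$ is flow-invariant, the observation that $\varphi_{t_1}(z)$ survives in $M_{\eps_0}$ for time $T-\Delta$ with $t_1\le C|\ln\eps|$ gives an inclusion of the survival set in a union of $O(|\ln\eps|)$ translates $\varphi_{-j}$ of the $M_{\eps_0}$--survival set of duration $T-\Delta-1$, whence $V_\eps(T)\lesssim |\ln\eps|\,V_{\eps_0}(T-\Delta)$ — actually sharper than the stated bound. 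But as written, the central containment of your Step 2 is not established, and Step 1 is doing no legitimate work.
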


\begin{proof}

For $(x,\xi) \notin \Gamma_-$ we denote by $l_{\eps,+}(x,\xi)$ the exit time of the manifold $M_\eps$, that is the maximum time such that: $\forall t \in [0,l_{\eps,+}(x,\xi)], \varphi_t(x,\xi) \in S^*M_\eps$. By Santalò's formula, we can express $V_\eps(T)$ as:
\[ V_\eps(T) = \int_{\partial_-S^*M_\eps} \left(l_{\eps,+}(x,\xi) - T\right)_+ d\mu_{\nu,\eps} \]
where $x_+ = \sup(x,0)$, $d\mu_{\nu,\eps}(x,\xi) = |g(\xi,\nu)|i^*_{\partial S^*M_\eps}(d\mu)$\footnote{The metric $g$ here actually denotes the dual metric to $g$ which is usually written $g^{-1}$. As mentioned in the introduction, we do not bother with such notations in order to keep the reading affordable.}, $\nu$ is the normal unit outward covector to the boundary, $i^*_{\partial S^*M_\eps}(d\mu)$ is the restriction of the Liouville measure to the boundary (the measure induced by the Sasaki metric restricted to $\partial S^*M_\eps$). There exists a maximum time $T^*_\eps$, such that given any $(x,\xi) \in \partial_+S^*M_{\eps_0}$, $\varphi_{T_\eps}(x,\xi)$ has exited the manifold $M_\eps$. One can bound this time $T^*_\eps$ by $\ln(C\eps_0/\eps)$, where $C > 0$ is some constant independent of $(x,\xi)$ and $\eps$ (see the proof of \cite[Lemma 2.3]{ggsu}). We introduce $T_\eps:=-2\ln(\eps) > T^*_\eps$ for $\eps$ small enough. As a consequence, for $T \geq 2T_\eps$, one has:
\[ V_\eps(T) \leq \int_{\partial_-S^*M_\eps \cap D_\eps} \left(l_{\eps_0,+}(\psi_\eps(x,\xi))-(T-2T_\eps)\right)_+ d\mu_{\nu,\eps}, \]
where $\psi_\eps^{-1} : \partial_-S^*M_{\eps_0} \rightarrow \psi_\eps^{-1}(\partial_-S^*M_{\eps_0}) =: D_\eps \subset \partial_-S^*M_{\eps}$ is the diffeomorphism which flows backwards (by $\overline{\varphi}_\tau$) a point $(x,\xi) \in \partial_-S^*M_{\eps_0}$ to the boundary $\partial_-S^*M_{\eps}$ (see Figure (\ref{fig:esc})).

The dependence of $\psi_\eps^{-1}$ on $\eps$ is smooth down to $\eps=0$: this follows from the implicit function theorem. In the local product coordinates $(\rho,y)$, one can write $d\mu_{\nu,\eps} = 1/\eps \sin(\theta) h(\eps,y) dy d\theta$, where $[0,\pi] \ni \theta \mapsto \xi(\theta)$ parametrizes the cosphere fiber, $h$ is a smooth non-vanishing function down to $\eps=0$. The point $(x,\xi)$ corresponds to $(y,\theta)$ in these coordinates and we write $(y',\theta') = \psi_\eps(y,\theta)$. If $T$ is large enough, for the integrand not to vanish, one has to require that the angle $\theta'(\psi_\eps(y,\theta))$ is uniformly contained in a compact interval of $]0,\pi[$. In other words, if we fix some constant $c > 0$, there exists an integer $\wt{N} \geq 2$ large enough (independent of $\eps$) such that for $T \geq -\wt{N}\ln(\eps)$, if $\theta'(\psi_\eps(y,\theta)) \in [0,c] \cup [\pi-c,\pi]$, it will satisfy $\left(l_{\eps_0,+}(\psi_\eps(y,\theta))-(T-2T_\eps)\right)_+ =0$. We can now make a change of variable in the previous integral by setting $(y',\theta') = \psi_\eps(y,\theta)$. Since the dependence of $\psi_\eps^{-1}$ is smooth in $\eps$ (down to $\eps=0$) and $[0,\eps_0] \times \left\{\rho=\eps_0\right\}$ is compact, $|\det(\psi_\eps^{-1}(y',\theta'))|$ is bounded independently of $(y',\theta')$ and $\eps$. We get for $T \geq -\wt{N} \ln(\eps)$:
\[ \begin{split}  \int_{\partial_-S^*M_\eps \cap D_\eps} & \left(l_{{\eps_0,+}}(\psi_\eps(x,\xi))-(T-2T_\eps)\right)_+ d\mu_{\nu,\eps} \\ 
& = \int_{\partial_-S^*M_\eps \cap D_\eps} \left(l_{{\eps_0,+}}(\psi_\eps(y,\theta))-(T-2T_\eps)\right)_+ \sin(\theta) h(\eps,y) \dfrac{dy d\theta}{\eps} \\
& = \int_{\partial_-S^*M_{\eps_0}} \left(l_{\eps_0,+}(y',\theta')-(T-2T_\eps)\right)_+ \sin\left(\theta (\psi_\eps^{-1}(y',\theta'))\right) \\
& \hspace{3cm} h\left(\eps,y(\psi_\eps^{-1}(y',\theta'))\right) |\det(\psi_\eps^{-1}(y',\theta'))| \dfrac{d\theta' dy'}{\eps}  \\
& \leq C \int_{\partial_-S^*M_{\eps_0}} \left(l_{\eps_0,+}(y',\theta')-(T-2T_\eps)\right)_+ \dfrac{d\theta' dy'}{\eps} \\
& \leq  C \eps^{-1} \int_{\partial_-S^*M_{\eps_0,+}} \left(l_{\eps_0}(y',\theta')-(T-2T_\eps)\right)_+ h(\eps_0,y) \sin(\theta')\dfrac{d\theta' dy'}{\eps_0}  \\
& \leq C \eps^{-1} V_{\eps_0}(T-2T_\eps), \end{split} \]
for some constant $C > 0$ (which may be different from one line to another) and where the penultimate inequality follows from the uniform bound on the angle (i.e. $\sin(\theta') \in [\sin(c),1]$). But we know that for any $\delta \in (Q,0)$, there exists an (other) constant $C > 0$ such that for all $T \geq 0$, $V_{\eps_0}(T) \leq Ce^{-\delta T}$. Thus, for $T \geq -\wt{N} \ln(\eps)$
\[ V_\eps(T) \leq C\eps^{-1}e^{-\delta(T-2T_\eps)}  \leq C\eps^{-(1+4\delta)}e^{-\delta T}  \]
\end{proof}

\subsection{The renormalized length}

\subsubsection{Definition} \label{sssect:def} This paragraph follows the definition of \cite[Section 4.1]{ggsu}. Given a smooth compact curve $\gamma$ in $M$, we denote by $l(\gamma)$ its length with respect to the metric $g$. Let $\alpha(x,x')$ denote a geodesic in $M$ joining two points at infinity $x,x' \in \partial M$ (we assume that $x \neq x'$). For the sake of simplicity, we will only write $\alpha$ in this paragraph, instead of $\alpha(x,x')$.

\begin{lemm}
The renormalized length of the geodesic $\alpha$ is the real number defined by:
\be L(\alpha) := \lim_{\eps \rightarrow 0} l(\alpha \cap \left\{\rho \geq \eps\right\}) + 2\ln(\eps) \ee
\end{lemm}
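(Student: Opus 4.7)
The plan is to parametrize $\alpha$ using the smooth compactified flow $\overline{\varphi}_\tau$ of Lemma \ref{lem:xbar} on $\overline{S^*M}$, which trades the infinite Riemannian time along $\alpha$ for a finite b-time interval, and then to extract the logarithmic divergence from the change of variables \eqref{eq:xbar}.

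Lift $\alpha$ to a flow line of $\overline{X}$ in $\overline{S^*M}$ and parametrize it in b-time. Because $\overline{X}$ is smooth and complete on $\overline{S^*M}$ and $\alpha$ joins two points of $\partial M$, the lifted curve corresponds to a \emph{finite} interval $\tau\in[\tau_-,\tau_+]$, with endpoints on $\partial_-S^*M$ and $\partial_+S^*M$ respectively, satisfying $\rho(\tau_\pm)=0$ and $\rho>0$ on the interior. By the strict convexity of $M_\eps$ given by Lemma \ref{lem:conv}, for $\eps$ small enough $\alpha\cap\{\rho\geq\eps\}$ is the single arc parametrized by $[\tau_\eps^-,\tau_\eps^+]$ where $\rho(\tau_\eps^\pm)=\eps$. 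Using the local form of $\overline{X}$ near $\partial M$ provided by Lemma \ref{lem:xbar}, the derivative $\overline{X}\rho$ equals $\pm 1$ at the respective boundary components, so smooth Taylor expansion yields
\[ \rho(\tau)=(\tau-\tau_-)(1+a_-(\tau))\quad\text{near }\tau_-,\qquad \rho(\tau)=(\tau_+-\tau)(1+a_+(\tau))\quad\text{near }\tau_+, \]
with $a_\pm$ smooth and vanishing at $\tau_\pm$; in particular, $\tau_\eps^--\tau_-=\eps+O(\eps^2)$ and $\tau_+-\tau_\eps^+=\eps+O(\eps^2)$ as $\eps\to 0$.

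By \eqref{eq:xbar}, the Riemannian length reads
\[ l(\alpha\cap\{\rho\geq\eps\})=\int_{\tau_\eps^-}^{\tau_\eps^+}\frac{d\tau}{\rho(\tau)}. \]
Fix interior times $\tau_0^-<\tau_0^+$ where the local expansions above are valid and split the integral into three pieces. On the left collar, decompose $\tfrac{1}{\rho(\tau)}=\tfrac{1}{\tau-\tau_-}+r_-(\tau)$ with $r_-$ bounded and continuous on $[\tau_-,\tau_0^-]$; then
\[ \int_{\tau_\eps^-}^{\tau_0^-}\frac{d\tau}{\rho(\tau)}=-\ln(\tau_\eps^--\tau_-)+\ln(\tau_0^--\tau_-)+\int_{\tau_\eps^-}^{\tau_0^-}r_-(\tau)\,d\tau=-\ln\eps+C_-+o(1), \]
and the symmetric estimate gives $-\ln\eps+C_++o(1)$ on the right collar, while the middle piece is a fixed finite constant. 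Summing, $l(\alpha\cap\{\rho\geq\eps\})=-2\ln\eps+C+o(1)$, and adding $2\ln\eps$ produces the limit $L(\alpha)\in\R$. The only non-routine ingredient is the asymptotic $\rho(\tau)\sim|\tau-\tau_\pm|$ near each boundary time, which is inherited directly from the smoothness of $\overline{X}$ down to $\partial M$ in Lemma \ref{lem:xbar} together with transversality; the rest is elementary bookkeeping of the divergent $\ln\eps$ terms.
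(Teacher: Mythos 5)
Your proof is correct and follows essentially the same route as the paper's: both parametrize $\alpha$ by the b-time of the compactified flow $\overline{\varphi}_\tau$, write the length as $\int d\tau/\rho(\tau)$ via \eqref{eq:xbar}, split off two boundary collars, and use $\overline{X}=\partial_\rho+\rho Y$ from Lemma \ref{lem:xbar} to see that $\rho(\tau)\sim|\tau-\tau_\pm|$ there. The only difference is bookkeeping: the paper performs the change of variable $u=\rho(\overline{\varphi}_s(z))$ to reduce the collar integrals to $\int_\eps(\tfrac1u+\mathcal{O}(1))\,du$, whereas you subtract the explicit singularity $1/(\tau-\tau_\pm)$ in the $\tau$ variable — the two computations are equivalent.
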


\begin{proof}[Proof]
Let $z=(x,\xi) \in \partial_- S^*M$ be the point on the boundary generating $\alpha$. Note that since $\overline{X}$ is transverse to the boundary, $s \mapsto \rho(\overline{\varphi}_s(z))$ is a local diffeomorphism for $s$ in a vicinity of $0$ or $\tau_+(z)$. Let us consider $\delta > \eps > 0$, small enough. According to the expression (\ref{eq:xbar}), one has:
\[ l(\alpha \cap \left\{\rho \geq \eps\right\}) = \int_{\tau^1_\eps}^{\tau^2_\eps} \dfrac{1}{\rho(\overline{\varphi}_s(z))} ds, \]
where $\tau^1_\eps < \tau^2_\eps$ are defined to be the two unique times in $[0,\tau_+(z)]$ such that $\rho(\overline{\varphi}_{\tau^i_\eps}(z))=\eps$ (note that they are well-defined by the convexity property stated in Lemma \ref{lem:conv}). Splitting the integral, and using the smooth change of variable $u = \rho(\overline{\varphi}_s(z))$ on the two extremal intervals, we obtain:
\[ \begin{split} l(\alpha \cap \left\{\rho \geq \eps\right\}) & = \int_{\tau^1_\eps}^\delta \dfrac{ds}{\rho(\overline{\varphi}_s(z))} + \int_{\delta}^{\tau_+(z)-\delta} \dfrac{ds}{\rho(\overline{\varphi}_s(z))} + \int_{\tau_+(z)-\delta}^{\tau^2_\eps} \dfrac{ds}{\rho(\overline{\varphi}_s(z))}   \\
& = \int_{\eps}^{u(\delta)} \left(\dfrac{1}{u} + \mathcal{O}(1)\right) du + \int_{\eps}^{u(\tau_+(z)-\delta)} \left(\dfrac{1}{u} + \mathcal{O}(1)\right) du + \text{cst} \\
& = - 2\ln(\eps) + \text{cst}(\eps), \end{split} \]
where we used the fact that $\partial_s \left(\rho(\overline{\varphi}_s(z))\right)|_{s=0} = 1$, according to Lemma \ref{lem:xbar}. Here, $\delta$ is chosen independent of $\eps$ and $\text{cst}(\eps)$ admits a finite limite as $\eps \rightarrow 0$.
\end{proof}

In other words, we have $l(\alpha \cap \left\{\rho \geq \eps\right\}) = -2\ln(\eps) + L(\alpha) + o(1)$, that is $L(\alpha)$ is the first finite term in the asymptotic expansion of the length of the geodesic segment $\alpha_\eps:=\alpha \cap \left\{\rho \geq \eps\right\}$ as $\eps \rightarrow 0$, as explained in the introduction. In the following, we will indifferently write $L(\alpha)$, $\alpha$ being a geodesic joining two points of the boundary at infinity, or $L(z)$, for $z \in \partial_-S^*M \setminus \overline{\Gamma}_-$ generating the geodesic $\alpha$.

\begin{rema}
In \cite{ggsu}, another definition of the renormalized length is provided, using the X-ray transform defined on functions on $M$ (see \cite[Section 3.2]{ggsu}):
\be L(\alpha) = \left.\left(I_0(\rho^\lambda)(z)-2/\lambda\right)\right|_{\lambda=0}, \ee
where $z \in \partial_-SM \setminus \overline{\Gamma}_-$ is the point generating $\alpha$. Actually, $\lambda \mapsto I_0(\rho^\lambda)(z)$ is a meromorphic function on $\text{Re}(z) > -1$ with only a simple pole at $\lambda=0$ and $2$ for residue.
\end{rema}

Note that there is \textit{a priori} no canonical choice of the renormalized length $L$ insofar as it depends on the choice of the boundary defining function $\rho$. Thus, it is interesting to see how $L$ is modified when $\rho$ changes. One can actually prove that if $\hat{\rho} = e^\omega \rho$ is another choice, then (see \cite[Equation (4.2)]{ggsu}):
\[ \label{eq:conf} \hat{L}(z) - L(z) = \omega(\pi(z))+\omega(\pi(\sigma(z))) \]

\begin{rema} \label{rem:psi} As a consequence, if two defining functions induce the same representative for the conformal infinity, then they induce the same renormalized lengths. Thus, if $\psi : \overline{M} \rightarrow \overline{M}$ is a diffeomorphism which preserves the boundary, $\rho \circ \psi$ and $\rho$ induce the same representative for the conformal infinity, so $L_g = L_{\psi^*g}$, where both renormalized lengths are computed with respect to $\rho$. \end{rema}

\subsubsection{An example : the hyperbolic disk}

Let us consider the hyperbolic disk $(\D, \frac{4|dz|^2}{(1-|z|^2)^2})$. The set of geodesics on $\D$ can be naturally identified with $\partial \D \times \partial \D \setminus \text{diag}$ insofar as there exists a unique geodesic joining to points on the ideal boundary. There is a natural choice for the boundary defining function which is given by $\rho(z) := \frac{1}{2}(1-|z|^2)$.

\begin{prop}
Let $\xi, \zeta \in \partial \D$ be two points on the boundary. Then:
\be \label{eq:lendisk} L(\xi,\zeta) = 2 \ln(|\xi-\zeta|) \ee
\end{prop}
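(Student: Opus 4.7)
The plan is to first do a direct calculation for a single, well-chosen geodesic (the real diameter), and then use the transitivity of the hyperbolic isometry group on pairs of distinct boundary points together with the conformal transformation law for $L$ to deduce the general formula.

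\textbf{Step 1 (the diameter $\alpha(-1,1)$).} I would parametrize the geodesic joining $-1$ and $1$ by $z(t)=t\in(-1,1)$. With $\rho=(1-|z|^2)/2$, the condition $\rho(z)\geq\eps$ becomes $|t|\leq\sqrt{1-2\eps}$, and
\[ l(\alpha(-1,1)\cap\{\rho\geq\eps\})=\int_{-\sqrt{1-2\eps}}^{\sqrt{1-2\eps}}\frac{2\,dt}{1-t^2}=2\ln\frac{1+\sqrt{1-2\eps}}{1-\sqrt{1-2\eps}}. \]
Expanding as $\eps\to0$ gives $l=-2\ln\eps+2\ln 2+o(1)$, so $L(\alpha(-1,1))=2\ln 2=2\ln|1-(-1)|$, in agreement with \eqref{eq:lendisk}.

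\textbf{Step 2 (reduction by an isometry).} Given $\xi,\zeta\in\partial\D$ distinct, choose a hyperbolic isometry $\psi(z)=\frac{az+b}{\bar b z+\bar a}$, with $|a|^{2}-|b|^{2}=1$, that sends $(-1,1)$ to $(\zeta,\xi)$. Because $\psi$ is an isometry of $g$, $\psi^{-1}(\alpha(\xi,\zeta))=\alpha(-1,1)$ and
\[ l_g\bigl(\alpha(\xi,\zeta)\cap\{\rho\geq\eps\}\bigr)=l_g\bigl(\alpha(-1,1)\cap\{\psi^{*}\rho\geq\eps\}\bigr). \]
Letting $\eps\to0$ this identifies $L_\rho(\alpha(\xi,\zeta))=L_{\psi^{*}\rho}(\alpha(-1,1))$. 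Writing $\psi^{*}\rho=e^{\omega}\rho$, the conformal transformation rule \eqref{eq:conf} gives
\[ L_\rho(\alpha(\xi,\zeta))=L_\rho(\alpha(-1,1))+\omega(1)+\omega(-1)=2\ln 2+\omega(1)+\omega(-1). \]

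\textbf{Step 3 (compute the conformal factor).} The standard Möbius identity
\[ 1-|\psi(z)|^{2}=\frac{1-|z|^{2}}{|\bar b z+\bar a|^{2}} \]
(verified by a direct expansion using $|a|^{2}-|b|^{2}=1$) yields $\rho\circ\psi=\rho/|\bar b z+\bar a|^{2}$, hence $\omega(z)=-2\ln|\bar b z+\bar a|$. At $z=\pm1$ this gives
\[ \omega(1)+\omega(-1)=-2\ln\bigl(|\bar a+\bar b|\,|\bar a-\bar b|\bigr)=-2\ln|a^{2}-b^{2}|. \]
On the other hand a short computation shows
\[ \psi(1)-\psi(-1)=\frac{a+b}{\bar a+\bar b}-\frac{-a+b}{-\bar a+\bar b}=\frac{2(|a|^{2}-|b|^{2})}{\bar a^{2}-\bar b^{2}}=\frac{2}{\bar a^{2}-\bar b^{2}}, \]
so $|\xi-\zeta|=2/|a^{2}-b^{2}|$, i.e.\ $|a^{2}-b^{2}|=2/|\xi-\zeta|$. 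Substituting,
\[ \omega(1)+\omega(-1)=2\ln|\xi-\zeta|-2\ln 2. \]

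\textbf{Step 4 (conclude).} Plugging into the formula of Step 2,
\[ L(\xi,\zeta)=2\ln 2+2\ln|\xi-\zeta|-2\ln 2=2\ln|\xi-\zeta|, \]
which is \eqref{eq:lendisk}. The only real subtlety is the bookkeeping in Step 3: one must keep track of the fact that the renormalized length does not come from the isometry alone but carries a correction term that is exactly controlled by the boundary values of the conformal factor $\omega$. Once one has the clean transformation rule \eqref{eq:conf}, the rest is a short computation with Möbius maps.
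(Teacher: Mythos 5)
Your proof is correct, but it takes a genuinely different route from the paper's. The paper treats an arbitrary pair $(\xi,\zeta)$ directly: it writes the hyperbolic distance between the two cut-off points via the cross-ratio, $d(p_\eps,q_\eps)=\ln\bigl(|\xi-q_\eps||\zeta-p_\eps|/(|\xi-p_\eps||\zeta-q_\eps|)\bigr)$, and then only needs the asymptotics $|\xi-p_\eps|=\eps(1+o(1))$, which it leaves as an elementary geometric exercise. You instead compute the single easy case of the diameter $\alpha(-1,1)$ explicitly and transport it to all pairs by an isometry $\psi$, absorbing the fact that $\psi$ does not preserve $\rho$ into the conformal transformation law $\hat L-L=\omega(x)+\omega(x')$ for $\hat\rho=e^{\omega}\rho$. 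This is arguably more conceptual: your $\omega(z)=-2\ln|\bar bz+\bar a|=\ln|\psi'(z)|$ is exactly the Jacobian factor in the paper's lemma on the action of isometries (with $n=1$), so your Steps 2--3 in effect reprove, for the disk, the identity $L(\gamma(\xi),\gamma(\zeta))=L(\xi,\zeta)+\ln(|\gamma'(\xi)||\gamma'(\zeta)|)$ that the paper records just after the proposition. The trade-off is that you must invoke the conformal change rule quoted from Graham--Guillarmou--Stefanov--Uhlmann, which the paper states without proof, so your argument is slightly less self-contained than the paper's bare-hands computation; on the other hand, you avoid the unproved estimate $|\xi-p_\eps|=\eps(1+o(1))$. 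One transcription slip in Step 3: $\psi(-1)=\frac{-a+b}{\bar a-\bar b}$, not $\frac{-a+b}{-\bar a+\bar b}$; with the denominator as you wrote it, the difference $\psi(1)-\psi(-1)$ would evaluate to $4i\,\mathrm{Im}(\bar ab)/(\bar a^{2}-\bar b^{2})$ rather than $2/(\bar a^{2}-\bar b^{2})$. The final value $2/(\bar a^{2}-\bar b^{2})$ you state is the correct one, so the conclusion is unaffected, but the intermediate formula should be fixed.
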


\begin{proof}[Proof]
We denote by $\alpha$ the geodesic joining $\xi$ to $\zeta$. For $\eps > 0$, we denote by $p_\eps$ and $q_\eps$ the points of intersection of $\alpha$ with $\left\{\rho=\eps\right\}$ in a respective vicinity of $\xi$ and $\zeta$. We have:
\[ d(p_\eps,q_\eps) = \ln \left( \dfrac{|\xi-q_\eps||\zeta-p_\eps|}{|\xi-p_\eps||\zeta-q_\eps|} \right) = 2 \ln(|\xi-q_\eps|) - 2\ln(|\xi-p_\eps|),\]
by symmetry. As $\eps \rightarrow 0$, $|\xi-q_\eps| \rightarrow |\xi-\zeta|$ and, using elementary arguments of geometry, one can prove that $|\xi-p_\eps| = \eps(1+o(1))$. Thus:
\[ \begin{split} d(p_\eps,q_\eps) & = 2 \ln(|\xi-\zeta|) -2\ln(\eps) - 2\ln(1+o(1))\end{split} \]
\end{proof}

\begin{rema}
In the model of the hyperbolic plane $(\HH, \frac{dx^2+dy^2}{y^2})$, if one takes the boundary defining function $\rho(x,y) = y$, then given two points $\xi, \zeta$ on the real line, one can check that:
\[ L(\xi,\zeta) = 2\ln(|\xi-\zeta|) \]
\end{rema}

We see in particular that the renormalized length is not a proper "length" according to the usual terminology insofar as it can be negative, and we even have $L(\xi,\zeta) \rightarrow - \infty$ as $\xi \rightarrow \zeta$. This is not specific to the hyperbolic disk and can be proved in the general frame. Moreover, we see from the expression (\ref{eq:lendisk}) that the renormalized length is not invariant by the isometries of the disk
\[ \gamma : z \mapsto e^{i\theta} \dfrac{z+c}{cz+1}, \]
if $c \neq 0$, but:
\[ \begin{split} L(\gamma(\xi),\gamma(\zeta)) & = 2\ln(|\gamma(\xi)-\gamma(\zeta)|)\\
& = 2 \ln(|\xi-\zeta||\gamma'(\xi)|^{1/2}|\gamma'(\zeta)|^{1/2}) \\
& = L(\xi,\zeta) + \ln(|\gamma'(\xi)||\gamma'(\zeta)|) \end{split} \]

\subsubsection{Action of isometries on the renormalized length} 

\label{sssect:isom}

Recall that a point on the boundary $\partial M$ is identified with the set of geodesics that asymptotically converge towards this point: two geodesics $\alpha_1$ and $\alpha_2$ induce the same point on the ideal boundary if there exists a constant $C>0$ (depending both on $\alpha_1$ and $\alpha_2$) such that for all $t \geq 0, d(\alpha_1(t), \alpha_2(t)) \leq C$. If $\gamma$ is an isometry on $M$, then one has $d(\gamma \circ \alpha_1 (t), \gamma \circ \alpha_2 (t)) \leq C$ for $t \geq 0$, which means that $\gamma \circ \alpha_1$ and $\gamma \circ \alpha_2$ represent the same point on the ideal boundary. In other words, the action of $\gamma$ on $M$ can be naturally extended to $\partial M$ and $\gamma : \overline{M} \rightarrow \overline{M}$ is at least continuously differentiable (see \cite[Proposition 2.11]{aw} for instance).

\begin{lemm}
Let $\alpha$ be a geodesic joining two points $x, x' \in \partial M$. We have:
\[ L(\gamma \circ \alpha) = L(\alpha) + \frac{1}{n} \ln(|d\gamma_x| |d\gamma_{x'}|), \]
where $|d\gamma_x|$ is the Jacobian of $\gamma|_{\partial M}$ in $x$ with respect to the metric $h$, $n+1$ being the dimension of $M$.
\end{lemm}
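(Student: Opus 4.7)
The strategy is to reduce the claim to the conformal transformation rule for the renormalized length (the displayed identity preceding Remark \ref{rem:psi}), after rewriting $L(\gamma\circ\alpha)$ via the isometry $\gamma$. First I would observe that $\tilde\rho := \rho\circ\gamma$ is itself a boundary defining function on $\overline M$: it is smooth (at least $C^1$) on $\overline M$, vanishes on $\partial M$ because $\gamma$ preserves $\partial M$, has non-vanishing differential on $\partial M$ since $d\gamma$ is invertible and $d\rho\neq 0$, and is positive on $M$. Therefore one may write $\tilde\rho=e^\omega\rho$ for a (positive) smooth function $e^\omega$ on $\overline M$.

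Next I would exploit the isometry property. Since $\gamma^*g=g$, the map $\gamma$ preserves lengths of curve segments, so
\[
 l\bigl((\gamma\circ\alpha)\cap\{\rho\geq\eps\}\bigr) \;=\; l\bigl(\alpha\cap\{\rho\circ\gamma\geq\eps\}\bigr)
 \;=\; l\bigl(\alpha\cap\{\tilde\rho\geq\eps\}\bigr).
\]
Taking the renormalized limit defined in Section \ref{sssect:def}, this yields $L_\rho(\gamma\circ\alpha)=L_{\tilde\rho}(\alpha)$, where the subscript records which boundary defining function is used to renormalize. Applying the conformal transformation rule $\hat L(z)-L(z)=\omega(\pi(z))+\omega(\pi(\sigma(z)))$ with $\hat\rho=\tilde\rho=e^\omega\rho$ to the geodesic $\alpha$ joining $x$ and $x'$ then gives
\[
 L(\gamma\circ\alpha)-L(\alpha) \;=\; \omega(x)+\omega(x').
\]

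It remains to identify $\omega|_{\partial M}$ with $\tfrac{1}{n}\ln|d\gamma|$. For this I would pull back the identity $\tilde\rho^2 g=e^{2\omega}\rho^2 g$ on $\overline M$ and restrict to the boundary. Since $\gamma^*g=g$ and $\tilde\rho=\rho\circ\gamma$, we have $\gamma^*(\rho^2 g)=(\rho\circ\gamma)^2\,\gamma^*g=e^{2\omega}(\rho^2 g)$; restricting to $T\partial M$ and using $\rho^2 g|_{T\partial M}=h$ yields the conformal identity
\[
 (\gamma|_{\partial M})^*h \;=\; e^{2\omega|_{\partial M}}\, h
\]
on the $n$-dimensional boundary. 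The Jacobian of a conformal map in dimension $n$ with factor $e^{2\omega}$ is $e^{n\omega}$, so $|d\gamma_x|=e^{n\omega(x)}$ and $\omega(x)=\tfrac{1}{n}\ln|d\gamma_x|$ for every $x\in\partial M$. Substituting at $x$ and $x'$ in the preceding displayed formula gives the claimed identity.

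The main conceptual point (rather than a technical obstacle) is recognizing that an isometry acts on the \emph{choice of boundary defining function} rather than on the geodesic itself in terms of the renormalization, so that the entire discrepancy is governed by the conformal change formula. The only mild point to check is the regularity of $\omega$ down to $\partial M$, which follows from the fact that $\rho$ and $\rho\circ\gamma$ both vanish to first order on $\partial M$ (so their ratio extends smoothly and positively to $\overline M$), together with the regularity of $\gamma$ recalled at the start of Section \ref{sssect:isom}.
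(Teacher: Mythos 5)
Your proof is correct, and it reaches the formula by a genuinely different (and cleaner) route than the paper's. The paper argues directly on the asymptotics: it sets $\eps' := \rho(\gamma(x_\eps))$, writes $l(\alpha_\eps)+\ln\eps = \bigl(l(\gamma(\alpha_\eps))+\ln\eps'\bigr) - \ln(\eps'/\eps)$, computes $\lim_{\eps\to0}\eps'/\eps = d\rho_{\gamma(x)}\bigl(d\gamma_x(\partial_\rho)\bigr) = \lambda(x)$ by differentiating along the flow of $\overline{X}$, and finally identifies $\lambda(x)^n$ with the Jacobian $|d\gamma_x|$ using $\gamma^*g=g$. You instead observe that precomposing with the isometry is the same as renormalizing $\alpha$ with the boundary defining function $\tilde\rho=\rho\circ\gamma=e^\omega\rho$, and then invoke the conformal change rule $\hat L(z)-L(z)=\omega(\pi(z))+\omega(\pi(\sigma(z)))$ quoted from \cite[Equation~(4.2)]{ggsu} just before Remark~\ref{rem:psi}. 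The identification of the boundary value of the conformal factor is in substance the same computation in both proofs: your $e^{\omega(x)}=\lim\tilde\rho/\rho$ is exactly the paper's $\lim\eps'/\eps=\lambda(x)$, and your $(\gamma|_{\partial M})^*h=e^{2\omega}h$ is the paper's $h(d\gamma_x\eta_i,d\gamma_x\eta_j)=\lambda^2\delta_{ij}$. What differs is how one reduces to that computation: the paper redoes the length asymptotics by hand (self-contained modulo Section~\ref{sssect:def}), whereas you outsource it to the quoted conformal change formula; your route is shorter, makes the structural point (isometries act through the choice of $\rho$) explicit, and recovers Remark~\ref{rem:psi} as the special case $\omega|_{\partial M}=0$. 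The one point to be explicit about — and it affects the paper's proof equally — is the regularity of $\gamma$ up to $\overline{M}$: the paper only records that $\gamma$ extends as a $\mathcal{C}^1$ map of $\overline{M}$, so $\tilde\rho$ is a priori only $\mathcal{C}^1$ and $\omega=\ln(\tilde\rho/\rho)$ only continuous up to $\partial M$; since the conformal change formula uses nothing beyond the boundary values of $\omega$ (equivalently, the first-order vanishing of $\tilde\rho$ along $\partial M$), this is harmless, but you should say so rather than asserting smoothness of $\tilde\rho$.
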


\begin{proof}[Proof]
We denote by $z=(x,\xi)$ the point in $\partial_- S^*M$ generating $\alpha$. We fix some $\delta > 0$: in the following, we will only study the half-line $\widetilde{\alpha} := \alpha \cap \left\{\rho\leq\delta\right\}$ (the other part of $\alpha$ can be studied in the same exact fashion). Let $x_\eps := \widetilde{\alpha} \cap \left\{\rho=\eps\right\}$ and $\alpha_\eps := \widetilde{\alpha} \cap \left\{\rho\geq\eps\right\}$. We define $\eps' := \rho(\gamma(x_\eps))$. We have:
\[ \label{eq:eps2} \begin{split} l(\alpha_\eps) + \ln(\eps) = \left(l(\gamma(\alpha_\eps)) + \ln(\eps')\right) - \ln(\eps'/\eps) \end{split} \]
As $\eps \rightarrow 0$, the left-hand side converges to $L(\widetilde{\alpha})$ whereas the term between parenthesis on the right-hand side goes to $L(\gamma(\widetilde{\alpha}))$, so all is left to compute is the limit of $\eps'/\eps$ as $\eps \rightarrow 0$. We write $\eps' = \rho(\gamma(\pi_0(\overline{\varphi}_{\tau_\eps}(z))))$, where $\tau_\eps$ is defined to be the unique time such that $\rho(\overline{\varphi}_{\tau_\eps}(z)) = \eps$. By the implicit function theorem, $\eps \mapsto \tau_\eps$ is a smooth function of $\eps$ and it satisfies: $\rho(\overline{\varphi_{\tau_\eps}}(z)) = \eps = \tau_{\eps} + \mathcal{O}(\tau_\eps^2)$. Thus $\left.\dfrac{d\tau_\eps}{d\eps}\right|_{\eps=0} = 1$ and:
\[ \begin{split} \lim_{\eps \rightarrow 0} \eps'/\eps & = d\rho_{\gamma(x)}\left(d\gamma_x\left(d{\pi_0}_z\left(\left.\dfrac{d\tau_\eps}{d\eps}\right|_{\eps=0} \dfrac{\partial\overline{\varphi}_{\tau_\eps}}{\partial \tau_\eps}(z)\right)\right)\right) \\
& = d\rho_{\gamma(x)}\left(d\gamma_x(d{\pi_0}_z(\overline{X}(z)))\right) \\
& = d\rho_{\gamma(x)}\left(d\gamma_x(\partial_\rho(x))\right)  \end{split}\]
Remark that $d\gamma_x(\partial_\rho(x)) = \lambda(x) \partial_\rho(\gamma(x))$ for some real number $\lambda$ depending on $x$, since $\gamma$ sends geodesics on geodesics. If $\eta_1, ..., \eta_{n} \in T_x(\partial M)$ is an orthonormal basis for the metric $h$, one can prove that $h(d\gamma_x(\eta_i),d\gamma_x(\eta_j)) = \lambda^2(x) \delta_{ij}$ by using the fact that $\gamma^*g =g$. As a consequence, the Jacobian of $\gamma|_{\partial M}$ in $x$ with respect to the metric $h$ is $\lambda^{n}(x)$. Thus:
\[ \lim_{\eps \rightarrow 0} \eps'/\eps = |d\gamma_x|^{\frac{1}{n}}\]
Replacing this in (\ref{eq:eps2}), and adding the other part of the geodesic, we find the sought result.
\end{proof}

\subsection{Homotopy on an asymptotically hyperbolic manifold}

$\overline{M}$ is a smooth compact manifold with boundary. Let us denote by $\pi : \widetilde{\overline{M}} \rightarrow \overline{M}$ its universal cover, which is a smooth non-compact surface with boundary $\partial \widetilde{\overline{M}}$: its boundary is a countable union of connected components which project down to $\partial M$ through $\pi$. The interior of $\widetilde{\overline{M}}$, that is $\widetilde{\overline{M}} \setminus \partial \widetilde{\overline{M}}$ is actually $\widetilde{M}$, the universal cover of $M$. In the sequel, we will rather use the notation $\partial \widetilde{M}$ to denote $\partial \widetilde{\overline{M}}$, which we see as the ideal boundary for the asymptotic manifold $\widetilde{M}$. The metric $g$ can be pulled back to a metric $\widetilde{g} = \pi^* g$ on $\widetilde{M}$ so that $\pi$ is a local isometry. In the same fashion, the boundary defining function $\rho$ can be pulled back via $\pi$ on $\widetilde{\overline{M}}$ and $(\widetilde{M},\widetilde{g})$ thus almost has the structure of an asymptotically hyperbolic manifold, in the sense that for each point $p \in \partial \widetilde{M}$, one can find coordinate charts so that the metric $\widetilde{g}$ has the form (\ref{eq:g}).

The unit cotangent bundle $S^*\widetilde{M}$ of $\widetilde{M}$ is also a cover of $S^*M$ (it is not its universal cover though) and we will still denote by $\pi : S^*\widetilde{M} \rightarrow S^*M$ the covering map. The geodesic flow $\varphi_t$ on $SM$ lifts to $\widetilde{\varphi}_t$ on $\widetilde{M}$, or equivalently, the vector field $X$ lifts to $\widetilde{X}$ on $S^*\wt{M}$, which is nothing but the geodesic vector field induced by the metric $\widetilde{g}$. Note that $\overline{X} = X/\rho$ also lifts to $S^*M$, as the vector field $\widetilde{\overline{X}} = \widetilde{X}/\widetilde{\rho}$ which is smooth down to $\partial \widetilde{M}$.

Given two points $(x,x') \in \partial M \times \partial M$, there exists in each class of homotopy $[\gamma]$ joining these two points a unique geodesic $\alpha$ (seen as an unparametrized curve). This geodesic lifts by the isometry $\pi$ to a geodesic in $\widetilde{M}$ (for the metric $\widetilde{g}$) joining two pre-images of $x$ and $x'$. Now, the converse is also true: given two pre-images of $x$ and $x'$ on the boundary of $\widetilde{M}$, there exists a unique geodesic $\widetilde{\alpha}$ joining them and it projects down on $M$ to a geodesic joining $x$ to $x'$ in a certain homotopy class.

One can check that by construction, the renormalized length of the geodesic $\widetilde{\alpha}$ between two boundary points on $\widetilde{M}$ is equal to that of its projection $\alpha = \pi(\widetilde{\alpha})$ on $M$. In other words, the knowledge of the renormalized marked boundary distance on the boundary of $M$ is equivalent to the knowledge of the renormalized boundary distance function on $\partial \widetilde{M} \times \partial \widetilde{M}$.

\section{The Liouville current}

Like in the previous paragraphs, we denote by $\widetilde{M}$ the universal cover of $M$. It is a topological disk on which we fix an orientation. As explained previously, all the objects ($g, \rho, X$, ...) lift to $\wt{M}$ and their corresponding object in the universal cover is invariant by the action of the fundamental group $\pi_1(M)$. Since we will only work on $\wt{M}$ in the following, for the reader's convenience, we will often drop the notation $\wt{\cdot}$ when the context is clear, except for the universal cover itself $\wt{M}$. We define
\[ \mathcal{G} := (\partial \widetilde{M} \times \partial \widetilde{M}) \setminus \text{diag}, \]
which can be naturally identified with the set of untrapped geodesics (neither in the future nor the past) on $\wt{M}$ insofar as there exists a unique geodesic joining two boundary points. $\mathcal{M}$ is the set of Borel measures on $\mathcal{G}$ which are invariant by the flip.

\label{sect:liouv}

\subsection{The Liouville current in coordinates}

We define the diffeomorphism $\psi : \mathcal{G} \times \R \rightarrow S^*\widetilde{M} \setminus (\Gamma_- \cup \Gamma_+)$ according to the expression:
\[ \psi(x,x',t) := \alpha(t), \]
where $\alpha$ is the unique geodesic joining $x$ to $x'$ parametrized in the following way: if $z = (x,\xi) \in \partial_- S^* \widetilde{M}$ denotes the point generating $\alpha$, then we parametrize the geodesic by $\alpha(t) = \varphi_t(m(z))$, where $m(z) = \overline{\varphi}_{\tau_+(z)/2}(z)$ is the middle point (this is a smooth map according to Section \ref{sssect:trapped}).

By construction, we have $\psi_* \partial_t = X$. The Liouville volume form $\mu$ is pulled back by via $\psi$ on a volume form $\omega= f dx \wedge dx' \wedge dt$ (for some smooth and non-vanishing function $f$) on $\mathcal{G} \times \R$ which satisfies:
\[ i_{\partial_t} \omega = \psi^* \left( i_{X} (\lambda \wedge d\lambda) \right) = \psi^*(d \lambda) \]
Since $i_{X}(d\lambda) = 0$, we also have:
\[ \mathcal{L}_{\partial_t}  \psi^*(d \lambda) = d(i_{\partial_t} \psi^*(d \lambda)) = d(\psi^*(i_X(d\lambda))) = 0 \]
As a consequence, the function $f$ does not depend on $t$ and we can write
\[ d\left|\psi^* \mu\right| = d \eta \otimes dt, \]
for some measure $\eta \in \mathcal{M}$ which we call the Liouville current on $\mathcal{G}$. 


Let $\gamma(t) := \pi_0(\alpha(t))$. We define
\be \label{eq:v} V := \left\{ (\tau, \theta) \in \R \times (0, \pi), (\gamma(\tau), R_\theta \dot{\gamma}(\tau)) \notin \Gamma_- \cup \Gamma_+ \right\}, \ee
where $R_\theta$ is the rotation by a positive angle $\theta$ in the fibers of $S^* \widetilde{M}$. We denote by $\mathcal{F}(x,x') \subset \mathcal{G}$ the open subsets of points $(y,y') \in \mathcal{G}$ such that the geodesic joining $y$ to $y'$ has a transverse and positive (with respect to the orientation) intersection with the geodesic $\alpha$ in $\widetilde{M}$. Now, consider the diffeomorphism $\phi : V \mapsto \mathcal{F}(x,x')$ defined by $\phi(\tau,\theta) = (y,y')$, the two points in $\partial \widetilde{M}$ such that the geodesic connecting them passes through the point $(\gamma(\tau), R_\theta \dot{\gamma}(\tau)) \in S^*\widetilde{M}$. The following lemma is a well-known fact (see \cite[Lemma 3.1]{gm} for instance) and we do not provide its proof.

\begin{lemm}
\[ \phi^* \eta = \sin(\theta) d \theta d\tau \]
\end{lemm}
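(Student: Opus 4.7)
The plan is to introduce the auxiliary parametrization
\[
\Psi : V \times \R \longrightarrow S^*\widetilde{M}, \qquad \Psi(\tau,\theta,t) := \varphi_t\bigl(\gamma(\tau),\, R_\theta \dot{\gamma}(\tau)\bigr),
\]
and to evaluate $\Psi^* d\mu$ in two different ways. For any $(\tau,\theta)\in V$ the two curves $t\mapsto \Psi(\tau,\theta,t)$ and $s\mapsto \psi(\phi(\tau,\theta),s)$ parametrize the same geodesic with the same orientation, so there is a smooth function $c:V\to\R$ (essentially the flow-time between the middle point of that geodesic and the lift $\iota(\tau,\theta):=(\gamma(\tau),R_\theta\dot{\gamma}(\tau))$) with $\Psi(\tau,\theta,t) = \psi(\phi(\tau,\theta),\, t + c(\tau,\theta))$. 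Pulling back $d\mu$ using the factorization $\psi^* d\mu = d\eta\otimes dt$, the shift contributes only an extra term of the form $\phi^*\eta\wedge dc$, which vanishes for degree reasons since $\phi^*\eta$ is already top-degree on the two-dimensional base $V$. Hence
\[
\Psi^* d\mu \;=\; \phi^*\eta \wedge dt,
\]
and the problem reduces to computing the left-hand side directly.

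For the direct computation I would work in the Sasaki-orthonormal frame $\{X,X_\perp,V\}$ on $T(S^*\widetilde{M})$, where $X_\perp$ is the horizontal lift of $v^\perp:=R_{\pi/2}v$ at $(p,v)$ and $V$ generates rotations in the cotangent fiber; the Liouville volume agrees, up to sign, with the dual top form $X^*\wedge X_\perp^*\wedge V^*$. Since $\varphi_t^* d\mu = d\mu$, the Jacobian of $\Psi$ is independent of $t$ and it suffices to compute it at $t=0$, where $\Psi$ specializes to the embedding $\iota$. There one has $d\Psi(\partial_t)=X$, $d\iota(\partial_\theta)=V$ (purely vertical, as the basepoint is fixed), and $d\iota(\partial_\tau)$ is purely horizontal: indeed $\dot{\gamma}$ is parallel along $\gamma$, and rotation by the fixed angle $\theta$ commutes with parallel transport in a two-dimensional oriented Riemannian manifold, so $R_\theta\dot{\gamma}$ is parallel along $\gamma$ as well, whence $\mathcal{K}(d\iota(\partial_\tau))=0$. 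Writing $\dot{\gamma}=R_{-\theta}v=\cos(\theta)v-\sin(\theta)v^\perp$ in the frame adapted to $(p,v)$ then gives $d\iota(\partial_\tau)=\cos(\theta)X-\sin(\theta)X_\perp$. Assembling these three vectors in $\{X,X_\perp,V\}$ produces a Jacobian of absolute value $\sin(\theta)$ on $(0,\pi)$, hence
\[
\Psi^* d\mu \;=\; \sin(\theta)\, d\tau\wedge d\theta\wedge dt
\]
as a positive measure.

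Comparing the two expressions yields $\phi^*\eta = \sin(\theta)\, d\theta\, d\tau$, which is the claim. The only non-routine step is the Jacobian computation at $t=0$, and in particular the identification of the covariant derivative of $\tau\mapsto R_\theta\dot{\gamma}(\tau)$ as being zero; the remaining work is a check of orientation conventions and of the vanishing of the $dc$ contribution, both of which fall out immediately from dimensional considerations.
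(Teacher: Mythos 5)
Your proof is correct, and it is essentially the standard argument: the paper itself omits the proof and refers to \cite[Lemma 3.1]{gm}, where the same computation (pulling back the Liouville volume through the map $(\tau,\theta,t)\mapsto\varphi_t(\gamma(\tau),R_\theta\dot\gamma(\tau))$ and evaluating the Jacobian in the Sasaki frame $\{X,X_\perp,V\}$ at $t=0$) is carried out. The two non-trivial points — that the time-shift $c(\tau,\theta)$ relating your parametrization to the middle-point parametrization $\psi$ contributes nothing for degree reasons, and that $\tau\mapsto R_\theta\dot\gamma(\tau)$ is parallel because rotation by a fixed angle commutes with parallel transport on an oriented surface — are both handled correctly.
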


\begin{rema} \label{rem:rem} In negative curvature, the tails $\Gamma_- \cup \Gamma_+$ have zero Liouville measure. This implies that the set ${}^c V \subset \R \times (0,\pi)$ has zero measure in $\R \times (0,\pi)$ (for the measure $\sin(\theta) d \theta d\tau$). In particular, we will not have to bother with trapped geodesics in the computations of the integrals of Section \ref{ssect:av}. \end{rema}

From the previous expression in coordinates, we recover the classical formula for $(x,x') \in \widetilde{M} \times \widetilde{M}$ (see \cite{jo}):
\be \label{eq:d} \eta\left(\mathcal{F}(x,x')\right) = \int_0^\pi \int_0^{d(x,x')} \sin(\theta) d\theta d\tau = 2 d(x,x'), \ee
where $d(\cdot,\cdot)$ denotes the Riemannian distance between two points. Let us now consider $(x,x') \in \partial \widetilde{M} \times \partial \widetilde{M}$. For $\eps > 0$ small enough, we denote by $x_\eps$ and $x'_\eps$ the two intersections of $\alpha$ (the geodesic joining $x$ to $x'$) with $\left\{ \rho = \eps \right\}$ in a respective neighborhood of $x$ and $x'$. We have:
\[ \begin{split} \eta(\mathcal{F}(x_\eps,x'_\eps)) + 4 \ln \eps & = 2 \left( d(x_\eps,x'_\eps) + 2 \ln \eps\right) \\ &= 2 \left( l(\alpha \cap \left\{\rho > \eps \right\}) + 2\ln \eps \right) \\ & \rightarrow_{\eps \rightarrow 0} 2 L(\alpha) \end{split} \]

\subsection{Liouville current and boundary distance}

Let $g_1$ and $g_2$ be two negatively-curved metrics such that their renormalized length agree. We denote by $\eta_1$ and $\eta_2$ their respective Liouville current.

\begin{lemm}
\label{lem:agree}
$\eta_1=\eta_2$
\end{lemm}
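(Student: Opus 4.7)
The plan is to adapt the classical cross-ratio argument of Otal \cite{jo} and Croke \cite{cc} to the renormalized setting, by showing that the Liouville current of a "cross-rectangle" of boundary arcs is expressible as an alternating sum of renormalized lengths.

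First I would use the coordinate expression $\phi^*\eta = \sin\theta \, d\theta \, d\tau$ from the preceding lemma to derive the Santal\'o-type identity $\eta(\mathcal{F}(\beta)) = 2\, l(\beta)$ for any smooth geodesic segment $\beta \subset \widetilde{M}$, where $\mathcal{F}(\beta) \subset \mathcal{G}$ is the set of non-trapped boundary geodesics crossing $\beta$ transversely (the trapped set contributes nothing by Remark \ref{rem:rem}). Applying this to the truncated geodesic $\alpha^\eps := \alpha(x,x') \cap \{\rho \geq \eps\}$ joining two ideal boundary points, together with the definition of the renormalized length, yields
\be \eta(\mathcal{F}(\alpha^\eps)) = 2\, l(\alpha^\eps) = -4\ln\eps + 2 L(\alpha) + o(1), \quad \eps \to 0. \ee

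Next, for four distinct boundary points $a, b, c, d \in \partial\widetilde{M}$ in cyclic order, I would check by direct case analysis (examining which side of each of the four chords $\alpha_{ac}, \alpha_{bd}, \alpha_{ad}, \alpha_{bc}$ a given pair $(y, y')$ lies) the pointwise combinatorial identity
\be \mathbb{1}_{\mathcal{F}(\alpha_{ac})} + \mathbb{1}_{\mathcal{F}(\alpha_{bd})} - \mathbb{1}_{\mathcal{F}(\alpha_{ad})} - \mathbb{1}_{\mathcal{F}(\alpha_{bc})} = 2\, \mathbb{1}_{([a,b]\times[c,d])\,\cup\,([c,d]\times[a,b])}. \ee
Since each set on the left has infinite $\eta$-measure, I would apply the identity with the $\eps$-truncated segments $\alpha_{xy}^\eps$ in place of the full geodesics. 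By the previous step, each Santal\'o term carries a $-4\ln\eps$ divergence which cancels exactly in the alternating sum, leaving the finite contribution $2[L(a,c)+L(b,d)-L(a,d)-L(b,c)] + o(1)$. Integrating the right-hand side of the combinatorial identity against $\eta$ yields $4\,\eta([a,b]\times[c,d])$, using flip-symmetry of $\eta$; the cross-rectangle sits at positive distance from $\{a,b,c,d\}$, so the $\eps$-truncation eventually affects only a collar piece absorbed into the divergences. Passing to the limit $\eps \to 0$ gives the cross-ratio formula
\be \eta([a,b]\times[c,d]) = \tfrac{1}{2}\bigl[L(a,c)+L(b,d)-L(a,d)-L(b,c)\bigr]. \ee

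Since the cross-rectangles with $a,b,c,d \in \partial\widetilde{M}$ in cyclic order form a $\pi$-system generating the Borel $\sigma$-algebra on $\mathcal{G}$, and the above formula expresses $\eta$ on every such set purely in terms of $L$, the hypothesis $L_1 = L_2$ yields $\eta_1 = \eta_2$. The hard part will be the limit argument in the previous paragraph: each $\mathcal{F}(\alpha_{xy})$ has infinite Liouville current, so the inclusion-exclusion must be carried out at the $\eps$-truncated level, carefully matching the four logarithmic "collar" divergences near the endpoints $\{a,b,c,d\}$ so that they cancel in the alternating combination. The entire point of the renormalization is that these collar divergences organize themselves into a cancelling pattern, leaving a finite cross-ratio identity that determines $\eta$ from $L$.
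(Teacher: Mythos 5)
Your overall strategy is the same as the paper's: reduce the lemma to the cross-ratio identity expressing $\eta(E\times F)$ as the alternating sum $L(x_1,x_3)+L(x_2,x_4)-L(x_2,x_3)-L(x_1,x_4)$, which is exactly the content of Lemma \ref{lem:long}. Two issues, one minor and one substantive. The minor one: your combinatorial identity is stated with the \emph{oriented} sets $\mathcal{F}(\alpha)$ (positive transverse intersections) on the left but an orientation-independent right-hand side; at a point of $E\times F$ the left side is not $2$ for every consistent choice of orientations. Working with unoriented crossing numbers (each full crossing set having measure $4\ell$, twice that of $\mathcal{F}$) removes the inconsistency and also removes the spurious factor $\tfrac12$ in your final formula --- the correct constant is $1$, as in Lemma \ref{lem:long}. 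This does not affect the conclusion $\eta_1=\eta_2$, since any fixed constant would do.

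The substantive gap is precisely the step you defer as ``the hard part.'' Carrying out the inclusion--exclusion at the level of the truncations $\alpha^\eps_{xy}=\alpha_{xy}\cap\{\rho\geq\eps\}$ is not a matter of ``matching logarithmic divergences'': the discrepancy $N_\eps-N$ between the truncated and ideal alternating crossing counts is supported on the set of geodesics meeting one of the eight removed end-pieces, and each such end-piece has \emph{infinite} length, hence the corresponding crossing set has infinite $\eta$-measure. So there is no domination, and one must exhibit a quantitative cancellation between, say, the geodesics crossing the tail of $\alpha_{ac}$ near $a$ and those crossing the tail of $\alpha_{ad}$ near $a$ --- two asymptotic but distinct curves. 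The paper's device for this is to truncate instead along four horospheres $H_i(\eps)$ based at the ideal vertices: for horospherical truncation the alternating sum of lengths is \emph{exactly} independent of the truncation parameter and equals $\eta(\Omega)$ (the classical ideal-quadrilateral identity, cf.\ \cite{aw}), so all that remains is the one-dimensional estimate that the horosphere truncation and the $\{\rho=\eps\}$ truncation differ along each geodesic by a quantity $\delta_i(\eps)\to 0$; this last point is proved using the product form (\ref{eq:g}) of the metric and the fact that the geodesics hit the boundary transversally with tangencies of order $\rho^2$. Without this (or an equivalent) mechanism, your limit argument does not close.
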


\begin{proof}[Proof]
We recall that $\partial \widetilde{M}$ is a countable union of real lines embedded in the circle $\mathbb{S}^1$. The topology on $\partial \widetilde{M}$ is that naturally induced by the topology on $\mathbb{S}^1$. It is sufficient to prove the result for $\Omega = E \times F$, where $E$ is an interval with extremal points $x_1, x_2$ and $F$ an interval with extremal points $x_3,x_4$, such that $\overline{E} \cap \overline{F} = \emptyset$. Indeed, if $\overline{E} \cap \overline{F} \neq \emptyset$, then by the following computation and the fact that $L(\xi,\zeta) \rightarrow_{\xi \rightarrow \zeta} - \infty$, one gets that $\eta_1(\Omega) = \eta_2(\Omega)=+\infty$. We actually prove:

\begin{lemm}
\label{lem:long}
For $\Omega = E\times F$, a product of two disjoint intervals:
\be \eta(\Omega) = L(x_1,x_3) + L(x_2,x_4) - L(x_2,x_3) - L(x_1,x_4)\ee
\end{lemm}

Given some $\eps > 0$, we introduce the four horospheres $H_i(\eps), i \in \left\{1,...,4\right\}$ such that $H_i(\eps)$ intercepts $x_i$ and the point defined as the intersection of the geodesic $\alpha(x_i,x_{i+2})$ ($i+2$ is taken modulo $4$) with $\left\{\rho = \eps \right\}$ in a vicinity of $x_i$.

\begin{figure}[h!]
\begin{center}

\includegraphics[scale=0.6]{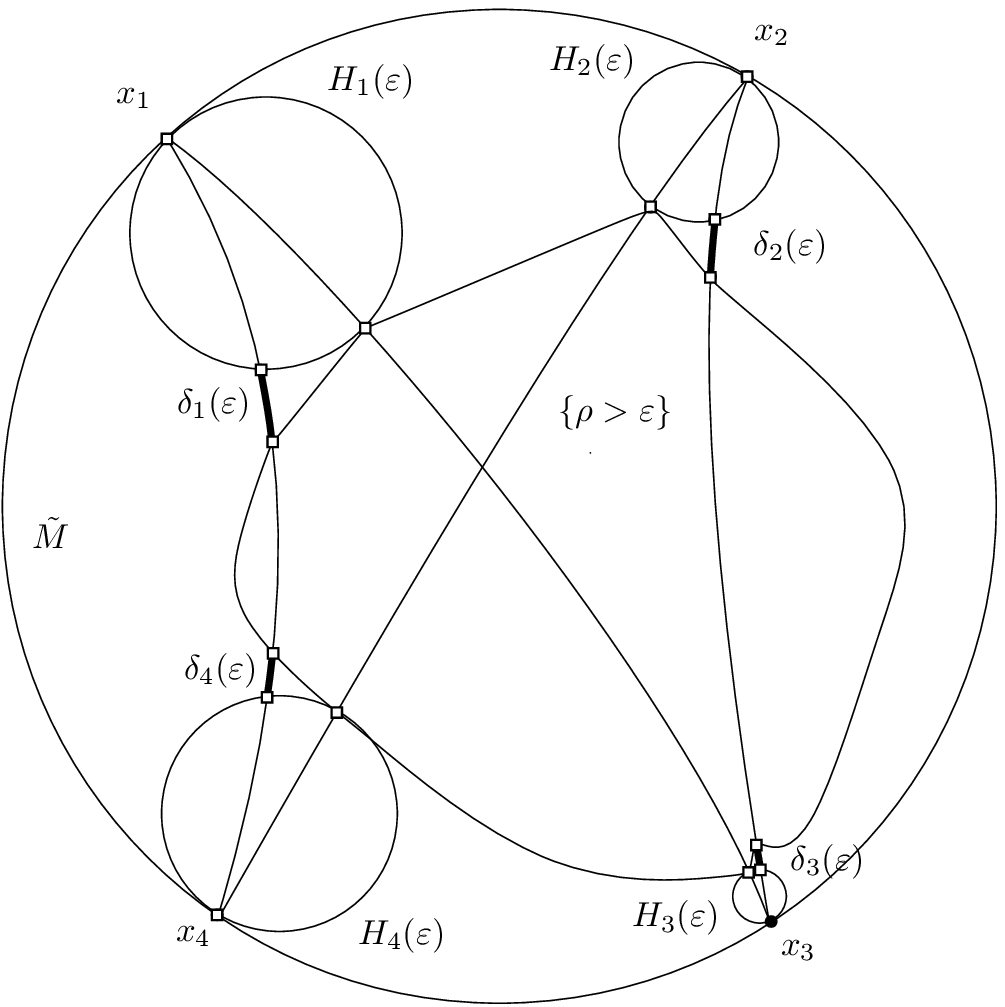}  \hfill
\includegraphics[scale=0.8]{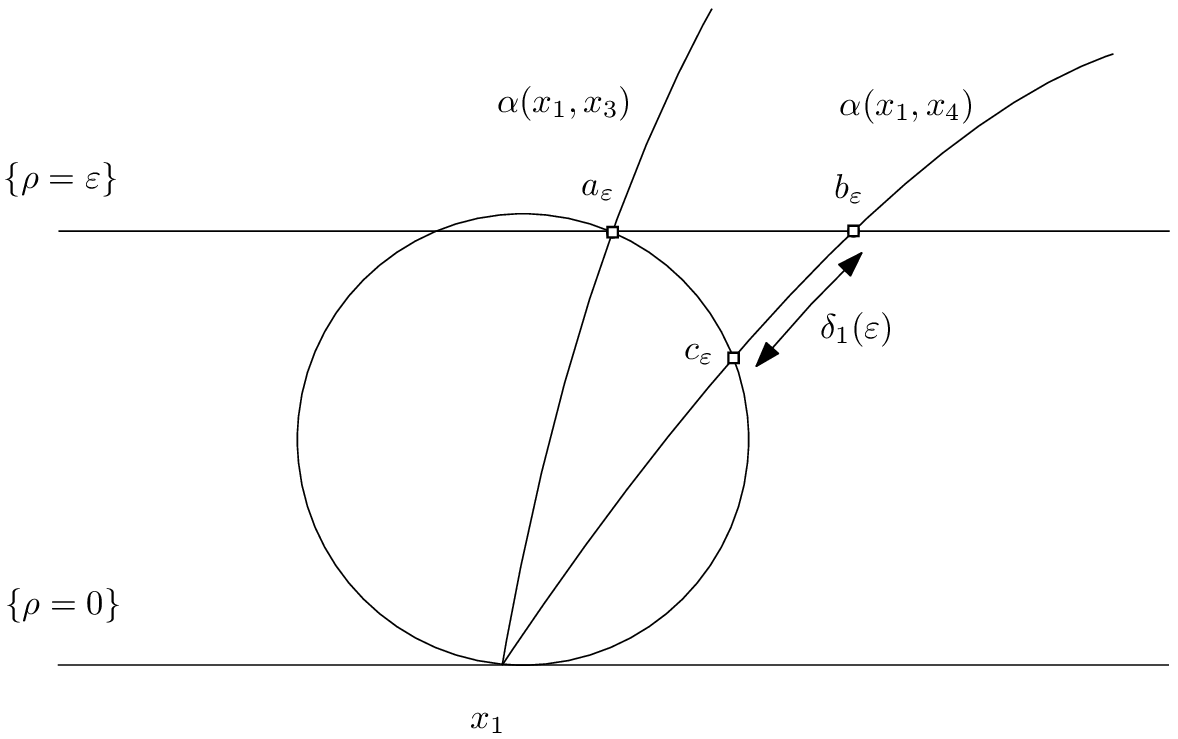} 
\caption{Left: The four horospheres and the lengths $\delta_i(\eps)$. Right: The horosphere $H_1(\eps)$}

\label{fig:horo}

\end{center}
\end{figure}

We have:
\[ \begin{split} &  L(x_1,x_3) + L(x_2,x_4) - L(x_2,x_3) - L(x_1,x_4) \\
& = \lim_{\eps \rightarrow 0} l(\alpha(x_1,x_3) \cap \left\{ \rho > \eps \right\}) + 2\ln \eps + l(\alpha(x_2,x_4) \cap \left\{ \rho > \eps \right\}) + 2\ln \eps \\  & \hspace{3cm}  - l(\alpha(x_2,x_3) \cap \left\{\rho > \eps \right\}) - 2\ln \eps   - l(\alpha(x_1,x_4) \cap \left\{\rho > \eps \right\}) -  2\ln \eps \\ & = \lim_{\eps \rightarrow 0} l(\alpha(x_1,x_3) \cap \left\{\rho > \eps \right\}) + l(\alpha(x_2,x_4) \cap \left\{\rho > \eps \right\})- l(\alpha(x_2,x_3) \cap \left\{\rho > \eps \right\})   \\ & \hspace{3cm} - l(\alpha(x_1,x_4) \cap \left\{\rho > \eps \right\}) \\
& = \lim_{\eps \rightarrow 0} l(\alpha(x_1,x_3) \cap H_{\text{ext}}(\eps)) + l(\alpha(x_2,x_4) \cap H_{\text{ext}}(\eps)) - l(\alpha(x_2,x_3) \cap H_{\text{ext}}(\eps)) \\ &  \hspace{3cm}  - l(\alpha(x_1,x_4) \cap H_{\text{ext}}(\eps)) - \delta_1(\eps) - \delta_2(\eps) - \delta_3(\eps) - \delta_4(\eps), \end{split} \]
where $\delta_i(\eps)$ is the algebraic distance on the geodesic between its intersection with $H_i(\eps)$ and $\left\{\rho=\eps\right\}$, positively counted from $x_i$, and $H_{\text{ext}}(\eps) := \wt{M} \setminus \cup_{i=1}^4 H_i(\eps)$. Now, we know that the quantity
\[ \begin{split}  l(\alpha(x_1,x_3) \cap H_{\text{ext}}(\eps)) + l(\alpha(x_2,x_4) \cap H_{\text{ext}}(\eps)) - l(\alpha(x_2,x_3) \cap H_{\text{ext}}(\eps)) - l(\alpha(x_1,x_4) \cap H_{\text{ext}}(\eps)) \end{split} \]
is actually independent of $\eps$ (it is easy to check) and amounts to $\eta(\Omega)$ (see \cite{aw} for instance). It is thus sufficient to prove that $\delta_i(\eps) \rightarrow 0$ as $\eps \rightarrow 0$. Let us consider $\delta_1(\eps)$ for instance and $\eps$ small enough so that we can work in the coordinates where the metric $g$ can be written in the form
\[ g = \dfrac{d\rho^2 + h^2(\rho,y) dy^2}{\rho^2}, \]
for some smooth positive function $h^2$ (down to the boundary).





We have:
\[ \delta_1(\eps) = d(c_\eps, b_\eps) \leq d(c_\eps, a_\eps) + d(a_\eps, b_\eps) \leq d(c_\eps, a_\eps) + l([a_\eps, b_\eps]), \]
where the points $a_\eps, b_\eps, c_\eps, d_\eps$ are introduced in Figure \ref{fig:horo}, $[a_\eps, b_\eps]$ denotes the euclidean segment joining $a_\eps$ to $b_\eps$. Note that by construction $d(c_\eps, a_\eps) \rightarrow 0$ as $\eps \rightarrow 0$ (the points are on the same family of shrinking horospheres).

The two geodesics $\alpha(x_1,x_3)$ and $\alpha(x_1,x_4)$ with endpoint $x_1$, seen as curves in $\widetilde{M}$, can be locally parametrized by the respective smooth functions $(\rho, y_3(\rho))$ and $(\rho,y_4(\rho))$, according to the implicit function theorem since the geodesics intersect transversally the boundary (see Lemma \ref{lem:xbar}). One has by derivating at $\rho = 0$ that $\lambda_i \partial_{\rho} = \partial_{\rho} + y_i'(0) \partial_y$ for some constant $\lambda_i$, that is $y_i'(0) = 0$ and $\lambda_i=1$. In other words, we can parametrize locally both geodesics by $(\rho, y_0 + \mathcal{O}(\rho^2))$, where $y_0$ is some constant depending on the choice of coordinates. Thus $|y(a_\eps)-y(b_\eps)| = \mathcal{O}(\eps^2)$. If we choose a parametrization $\gamma(t) = (\eps, y(a_\eps) + t(y(b_\eps) - y(a_\eps)))$, for $t \in [0,1]$, of the euclidean segment $[a_\eps, b_\eps]$, then one has:
\[ l([a_\eps, b_\eps]) = \int_0^1 g(\dot{\gamma}(t), \dot{\gamma}(t))^{1/2} dt = \dfrac{|y(b_\eps) - y(a_\eps)|}{\eps} \int_0^1 h(\gamma(t)) dt, \]
where the integral is uniformly bounded with respect to $\eps$. Thus, by the previous remarks, $l([a_\eps, b_\eps]) = \mathcal{O}(\eps)$, which concludes the proof.

\end{proof}

\section{Construction of the deviation $\kappa$}

\label{sect:dev}

In this section, for the sake of simplicity, we will sometimes write $A = \mathcal{O}(\eps^\infty)$ in order to denote the fact that for all $n \in \N^*$, there exists $C_n > 0, \eps_n > 0$ such that: $\forall \eps \leq \eps_n, |A| \leq C_n \eps^n$.

\subsection{Reducing the problem}

Suppose $g_1$ and $g_2$ are two asymptotically hyperbolic metrics like in the setting of Theorem \ref{th}, that is they are either both negatively-curved and their renormalized distance coincide for some choices of conformal representatives.

According to \cite[Theorem 2]{ggsu} we know that there exists a smooth diffeomorphism $\psi : \overline{M} \rightarrow \overline{M}$ which is the identity on $\partial M$ and such that $\hat{g}_1 := \psi^* g_1 = g_2 + \mathcal{O}(\rho^\infty)$ (in the sense that the conformal representative of the two metrics coincide to infinite order on the boundary), where $\rho$ is the boundary defining function induced by the choice of $h_2$.

Notice that $(M,\hat{g}_1)$ satisfies the same assumption as $(M,g_1)$ and that their renormalized lengths agree since $\psi$ restricts to the identity on the boundary, i.e. $L_{g_1} = L_{\hat{g}_1}$, where both lengths are computed with respect to the same representative for the conformal infinity (this follows from Remark \ref{rem:psi}). In the following, for the sake of simplicity, we will actually denote $\hat{g}_1$ by $g_1$ and argue on this new metric. Thus, in these new notations, we have $g_1 = g_2 + \mathcal{O}(\rho^\infty)$.

\begin{rema} \label{rem:infty} In particular, this implies that the respective renormalized vector fields satisfy $\overline{X}_1 = \overline{X}_2 + \mathcal{O}(\rho^\infty)$, that is their $\mathcal{C}^\infty$-jet coincide on the boundary. By Duhamel's formula (see \cite[Lemma 2.2]{suv2} for instance) this implies that on the boundary $\partial_- S^*M$, for any $k \geq 0$, one has $||\overline{\varphi}^1_\tau - \overline{\varphi}^2_\tau||_{\mathcal{C}^k} = \mathcal{O}(\tau^\infty)$. \end{rema}

\subsection{The diffeomorphism $\kappa$}

\label{ssect:kappa}

We denote by $M_\eps := M \cap \left\{ \rho \geq \eps\right\}$ and by $\widetilde{M}_\eps$ its lift to the universal cover. Like in the previous section, all the objects are lifted on the universal cover. Unless it is mentioned, we will drop the notation $\wt{\cdot}$, except for the universal cover itself. $S^*\wt{M}_i$ will denote the unit cotangent bundle with respect to the metric $g_i$. $\mathcal{G}_1$ and $\mathcal{G}_2$ denote the set of geodesics connecting points on the ideal boundary $\partial \wt{M}$, with respect to the metrics $g_1$ and $g_2$. They will sometimes be identified with $\partial \wt{M} \times \partial \wt{M} \setminus \text{diag}$.

Given $(x,\xi) \in S^*\widetilde{M}_1 \setminus \Gamma^1_-\cup \Gamma^1_+$, we denote by $(z,z') \in \partial \wt{M} \times \partial \widetilde{M}$ (resp. $(y,y') \in \partial \wt{M} \times \partial \widetilde{M}$) the two points on the ideal boundary induced by the geodesic carrying the point $(x,\xi)$ (resp. $(x,R_\theta \xi)$ if $\theta \in (0,\pi)$ and $(x,R_\theta \xi) \in S^*\widetilde{M}_1 \setminus \Gamma^1_-\cup \Gamma^1_+$). This defines a map:
\[ \kappa_1 : \left| \begin{array}{l} \wt{W}_1 \rightarrow \mathcal{G}_1 \times \mathcal{G}_1 \setminus \text{diag} \\ (x,\xi,\theta) \mapsto (z,z',y,y') \end{array} \right., \]
where
\[ \wt{W}_1 := \left\{(x,\xi,\theta) \in S^*\wt{M}_1 \times (0,\pi), (x,\xi), (x,R_\theta \xi) \notin (\Gamma^1_- \cup \Gamma^1_+)\right\} \]
The map $\kappa_1$ is clearly bijective. It is smooth because each of the coordinates $(z,z',y,y')$ is smooth. Indeed, one has for instance
\[ z(x,\xi,\theta) = \pi_0(\overline{\varphi}^1_{\tau_-(x,\xi)}(x,\xi)), \]
and this is a smooth application according to Section \ref{sssect:trapped}.

The $g_2$-geodesics with endpoints $(z,z')$ and $(y,y')$ intersect in a single point $(\wt{\mathrm{x}}(x,\xi,\theta), \wt{\Xi}(x,\xi,\theta))$ (where $\wt{\Xi}$ is the covector on the $g_2$-geodesic with endpoints $(z,z')$) and form an angle $\wt{f}(x,\xi,\theta)$, which we call the \textit{angle of deviation}. This defines a map
\be \label{eq:kappa} \wt{\kappa} := \kappa_2^{-1} \circ \kappa_1 : \left| \begin{array}{l} \wt{W}_1 \rightarrow \wt{W}_2 \\
(x,\xi,\theta) \mapsto (\wt{\mathrm{x}}(x,\xi,\theta), \wt{\Xi}(x,\xi,\theta), \wt{f}(x,\xi,\theta)) \end{array}\right. \ee
where $\wt{W}_2$ is defined in the same fashion as $\wt{W}_1$. By the implicit function theorem, one can prove that $\kappa^{-1}_2$ is smooth and thus $\wt{\kappa}$ too. It is a bijective map whose inverse $\wt{\kappa}^{-1} = \kappa^{-1}_1 \circ \kappa_2$ is smooth by the same arguments. As a consequence, $\wt{\kappa}$ is a smooth diffeomorphism. Moreover, it is invariant by the action of the fundamental group and thus descends to the base as an application $\kappa : (x,\xi,\theta) \mapsto (\mathrm{x},\Xi,f)$.

\subsection{Scattering on the universal cover}

On the universal cover $\wt{M}$, the renormalized distance can actually be extended outside the boundary, namely we can set:
\[ D(p,q) := d(p,q)+\ln(\rho(p))+\ln(\rho(q))\]
$D$ is clearly smooth on $\wt{M} \times \wt{M} \setminus \text{diag}$ and using the fact that there exists a unique geodesic connecting two points, one can prove like in \cite[Proposition 5.15]{ggsu}, that the extension of $D$ to $\wt{\overline{M}} \times \wt{\overline{M}} \setminus \text{diag}$ is smooth. Now, as established in \cite[Proposition 5.16]{ggsu} the renormalized distance on the boundary actually determines the scattering map $\sigma$ (defined in (\ref{eq:scat})), that is:

\begin{prop}
\label{prop:scat}
If $L_1 = L_2$, then $\sigma_1 = \sigma_2$.
\end{prop}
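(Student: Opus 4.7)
The plan is to recover the scattering map directly from the renormalized boundary length by a first-variation-of-length argument; the hypothesis $L_1=L_2$ will then force $\sigma_1=\sigma_2$. Using the canonical b-cotangent identifications of section \ref{sssect:b}, the boundary sets $\partial_\pm S^*M$ are both identified with $T^*\partial M$ (reading off the tangential component $\eta$ of a unit b-covector $\mp\rho^{-1}d\rho+\eta$ on the boundary). Writing $\sigma(x,\eta)=(x',\eta')$, the target is the pair of derivative formulas
\[
d_{x'}L(\alpha(x,x'))\big|_{T_{x'}\partial\wt M}=\eta',\qquad d_{x}L(\alpha(x,x'))\big|_{T_x\partial\wt M}=\eta
\]
valid for each metric $g_i$; equality of $L_i$ on $\partial\wt M\times\partial\wt M\setminus\text{diag}$ then immediately yields $(\eta_1,\eta_1')=(\eta_2,\eta_2')$ and hence $\sigma_1=\sigma_2$.

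To prove the first derivative formula, I would pick a smooth curve $s\mapsto x'(s)\in\partial\wt M$ with $\dot x'(0)=V$ and consider the family of geodesics $\alpha_s:=\alpha(x,x'(s))$. From the definition $L(\alpha_s)=\lim_{\epsilon\to 0}(\ell(\alpha_s\cap\{\rho\geq\epsilon\})+2\ln\epsilon)$ and the first variation of length applied to each finite-length piece $\alpha_s\cap\{\rho\geq\epsilon\}$, the derivative in $s$ at $0$ reduces to the difference of the two endpoint contributions at the intersection points $p_\epsilon^{(1)}(s),p_\epsilon^{(2)}(s)$ with $\{\rho=\epsilon\}$ close to $x$ and to $x'(s)$. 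Using the product form (\ref{eq:g}) of $g$ near $\partial\wt M$ together with the smooth extension of $\overline\varphi_\tau$ up to the boundary provided by Lemma \ref{lem:xbar}, one checks that the contribution at $x'$ converges to the intrinsic b-pairing $\langle V,\eta'\rangle$, while the contribution at $x$ vanishes in the limit, since all the geodesics $\alpha_s$ are pinned at the common point $x$. The symmetric argument for $d_{x}L$ then gives the second formula.

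The main obstacle is the asymptotic analysis of these two boundary contributions: because $g$ blows up as $\rho^{-2}$ near the boundary, each $g$-pairing has an a priori $\epsilon^{-1}$ growth, and the finite limits arise only after cancellations which are morally the geometric content of the $2\ln\epsilon$ renormalization. At $x'$, the convergence rate $\dot p_\epsilon^{(2)}(0)\to V$ dictated by the transversality of $\overline X$ to $\partial\wt M$ precisely compensates the growth of $\dot\alpha_0$ in the Sasaki metric, leaving the finite pairing $\eta'(V)$ computed in the conformal representative $h_0$; at $x$, the convergence rate of $p_\epsilon^{(1)}(s)\to x$ (uniform in $s$ to infinite order by the smoothness of the $\overline\varphi_\tau$-flow on $\overline{S^*\wt M}$) kills the whole contribution. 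Once these cancellations are made rigorous, the two derivative formulas above hold and the equality $\sigma_1=\sigma_2$ follows from $L_1=L_2$.
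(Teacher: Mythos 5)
Your proposal is correct and follows essentially the same route as the paper: in both arguments the key identity is that the boundary differential $d_{x'}L(x,\cdot)$ restricted to $T_{x'}\partial\wt{M}$ recovers the tangential component of the exit momentum (the direction of the geodesic as it leaves $\wt{M}$), so $L_1=L_2$ forces the incoming and outgoing data, hence the scattering maps, to coincide. The only cosmetic difference is that the paper obtains the differentiability and the gradient formula by first extending $D(p,q)=d(p,q)+\ln\rho(p)+\ln\rho(q)$ smoothly to $\overline{M}\times\overline{M}\setminus\mathrm{diag}$ (following \cite{ggsu}) and projecting its interior gradient onto $T_{q}\partial\wt{M}$, whereas you compute the first variation on the $\eps$-truncated geodesics and pass to the limit directly.
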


The proof also applies here, in the universal cover. It is a standard computation since we know that $D$ is differentiable, which relies on the fact that the gradient of $q \mapsto L_i(p,q)$ (for $p, q \in \partial \wt{M}$) is the projection on the tangent space $T_q \partial \wt{M}$ of the gradient of $q \mapsto D_i(p,q)$, which precisely corresponds to the direction of the geodesic joining $p$ to $q$ when it exits $\wt{M}$.

\label{ssect:psi}

We fix $\eps > 0$ and define $S^*\wt{M}^i_\eps := S^*\wt{M}_i \cap \left\{\rho\geq\eps\right\}$. For $i \in \left\{1,2\right\}$, given $(x,\xi) \in \partial_-S^*\widetilde{M}^i_{\eps}$ we can represent the vector $\xi=\xi(\omega)$ by the angle $\omega \in [0,\pi]$ such that $\sin \omega = |g_i(\nu_i(x),\xi)|$, where $\nu_i$ stands for the unit outward normal covector to $\left\{\rho=\eps\right\}$ (with respect to the metric $g_i$).

\begin{lemm}
\label{lem:est1}
There exists an angle $\omega_\eps$ (only depending on $\eps$), such that for all $(x, \xi(\omega)) \in \partial_-S^*\widetilde{M}^1_{\eps} \setminus \Gamma_-^1$, given by an angle $\omega \in [\omega_\eps, \pi-\omega_\eps]$, if $\alpha_1(p,q)$ denotes the $g_1$-geodesic generated by $(x,\xi)$, with endpoints $(p, q) \in \partial \widetilde{M} \times \partial \wt{M}$, then the $g_2$-geodesic $\alpha_2(p,q)$ with endpoints $p$ and $q$ intercepts the set $\left\{\rho>\eps\right\}$. Moreover, for any $N \in \N^*$, we can take $\omega_\eps =\eps^N$.
\end{lemm}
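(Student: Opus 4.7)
The plan is to realize $\alpha_2(p,q)$ as an $\overline{X}_2$-trajectory emanating from the same boundary data as $\alpha_1(p,q)$, and then exploit the $\mathcal{O}(\rho^\infty)$-closeness of the two flows to transfer a quantitative lower bound on the penetration depth from $g_1$ to $g_2$.

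First, since $(x,\xi(\omega)) \notin \Gamma^1_-$, flowing backward under $\overline{X}_1$ produces a point $(p,\xi_-) \in \partial_- S^*\widetilde{M}$. The hypothesis $g_1 = g_2 + \mathcal{O}(\rho^\infty)$ forces $g_1|_{\partial\widetilde{M}} = g_2|_{\partial\widetilde{M}}$, so $\xi_-$ is also an incoming $g_2$-unit covector. Applying Proposition \ref{prop:scat} on the universal cover gives $\sigma_1(p,\xi_-) = \sigma_2(p,\xi_-)$, which shows that the $g_2$-trajectory $\overline{\varphi}^2_\tau(p,\xi_-)$ exits $\widetilde{M}$ precisely at $q$; by uniqueness of the $g_2$-geodesic joining $p$ and $q$, this trajectory is $\alpha_2(p,q)$. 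Setting $\rho^i_{\max} := \sup_\tau \rho(\overline{\varphi}^i_\tau(p,\xi_-))$, the lemma reduces to showing $\rho^2_{\max} > \eps$.

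The next step is a sharp lower bound on $\rho^1_{\max} - \eps$. Writing $\xi = \overline{\xi}_0\rho^{-1}d\rho + \eta$ in b-cotangent coordinates and using the metric form \eqref{eq:g} near $\partial\widetilde{M}$, the unit-cotangent constraint reads $\overline{\xi}_0^2 + \rho^2|\eta|^2_{h_\rho} = 1$; in particular $\overline{\xi}_0|_{(x,\xi(\omega))} = \sin\omega$, so $|\eta|^2_{h_\eps} = \cos^2\omega/\eps^2$, and at the apex $\rho^1_{\max}|\eta|_{h_{\rho^1_{\max}}} = 1$. Hamilton's equations yield $\partial_\tau\log|\eta|^2_{h_\rho} = -(\partial_\rho h/h)\,\overline{\xi}_0$; integrating from $\rho = \eps$ to $\rho = \rho^1_{\max}$ with $\partial_\rho h/h = \mathcal{O}(1)$, the logarithmic variation is controlled by $\mathcal{O}(\rho^1_{\max}-\eps)$. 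Since this variation is itself of order $\eps\omega^2$, the quantity $|\eta|^2_{h_\rho}$ changes only by the \emph{relative} factor $1 + \mathcal{O}(\eps\omega^2)$. Inverting the apex relation,
\[ \rho^1_{\max} - \eps = \eps(\sec\omega - 1)\bigl(1 + \mathcal{O}(\eps)\bigr) \geq \tfrac{1}{4}\eps\sin^2\omega \]
for $\omega$ and $\eps$ small (the regime $\omega$ bounded away from $0,\pi$ being easier, as $\rho^1_{\max}-\eps$ is then bounded below independently of $\eps$).

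Finally, the total transit time satisfies $\tau_+^1 = \mathcal{O}(\eps)$ and the trajectory stays in $\{\rho \leq C\eps\}$. Remark \ref{rem:infty} combined with Duhamel's formula (using that $(\overline{X}_1 - \overline{X}_2)(\overline{\varphi}^2_\tau(p,\xi_-)) = \mathcal{O}(\rho^\infty) = \mathcal{O}(\eps^\infty)$ along the arc) gives
\[ \sup_{\tau\in[0,\tau_+^1]}\bigl|\overline{\varphi}^1_\tau(p,\xi_-) - \overline{\varphi}^2_\tau(p,\xi_-)\bigr| \leq C_M\eps^M, \quad \forall M \in \N^*, \]
so in particular $|\rho^1_{\max} - \rho^2_{\max}| = \mathcal{O}(\eps^M)$ for every $M$. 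Given $N$, setting $\omega_\eps := \eps^N$ and choosing $M = 2N+2$ yields $\rho^2_{\max} \geq \eps + \tfrac{1}{4}\eps^{2N+1} - C_{2N+2}\eps^{2N+2} > \eps$ for $\eps$ sufficiently small; the symmetric range $\omega \in [\pi/2, \pi-\omega_\eps]$ is handled identically by reversing the geodesic. The main obstacle is the sharp penetration estimate of the third paragraph: the mere fact $(x,\xi(\omega)) \in \partial_- S^*\widetilde{M}^1_\eps$ only guarantees $\rho^1_{\max} \geq \eps$, which cannot beat the $\mathcal{O}(\eps^M)$ comparison error, so one must track the \emph{relative} smallness of the variation of $|\eta|^2_{h_\rho}$ along the short arc to produce a lower bound polynomial in both $\omega$ and $\eps$.
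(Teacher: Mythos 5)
Your proposal is correct and follows the same overall architecture as the paper's proof: flow $(x,\xi(\omega))$ backwards to the common boundary point $(p,\zeta)\in\partial_-S^*\widetilde M$, use Proposition \ref{prop:scat} to identify $\alpha_2(p,q)$ with the $\overline{\varphi}^2$-trajectory of that same point, establish a lower bound on the penetration depth of $\alpha_1(p,q)$ past $\{\rho=\eps\}$ that is \emph{polynomial} in $\eps$, and beat it against the $\mathcal{O}(\eps^\infty)$ (in fact $\mathcal{O}(\tau^\infty)$ with $\tau=\mathcal{O}(\eps)$) closeness of the two renormalized flows. The one step you carry out genuinely differently is the penetration estimate. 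The paper reads off $\dot\rho(0)=\rho^2\xi_0=\eps\sin\omega\geq \tfrac{2}{\pi}\eps^{N+1}$ from Hamilton's equations and propagates this radial velocity over a short time interval $[0,\delta]$, obtaining a penetration $\gtrsim\delta\,\eps^{N+1}$, i.e.\ linear in $\sin\omega$. You instead use the apex relation $\rho_{\max}^2\,|\eta|^2_{h_{\rho_{\max}}}=1$ together with the near-conservation of $|\eta|^2_{h_\rho}$ (whose logarithmic $\tau$-derivative is $-2\overline{\xi}_0\,\partial_\rho h/h$, so that its total variation from entry to apex is controlled by $\rho_{\max}-\eps$ itself, which is the self-consistency point you rightly flag), obtaining $\rho_{\max}-\eps\geq\tfrac14\eps\sin^2\omega\gtrsim\eps^{2N+1}$. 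This is quadratic in $\sin\omega$, hence weaker than the paper's bound, but entirely sufficient since the comparison error can be taken $\mathcal{O}(\eps^{2N+2})$; the paper's route is a bit more elementary and avoids the bootstrapping on the variation of $|\eta|^2_{h_\rho}$, while yours has the minor virtue of identifying the leading-order apex height $\eps\sec\omega$ exactly. (Two cosmetic slips that do not affect the argument: for $\omega$ bounded away from $0$ and $\pi$ the penetration is of order $\eps$, not bounded below independently of $\eps$; and for $\omega\in(\pi/2,\pi)$ one should read $|\cos\omega|$ in the apex relation, which your reversal of the geodesic handles.)
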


\begin{proof}[Proof]
Let $(x,\xi) \in \partial_-S^*\wt{M}_\eps^1$. We set ourselves in the coordinates $(\rho,y)$ induced by the conformal representative $h$. The trajectory
\[ t \mapsto (\rho(t),y(t),{\xi_0}(t),\eta(t)) \in S^*\wt{M} \]
of the point $(x,\xi)$ under the flow $X$ is given by Hamilton's equation (see \cite[Equation (2.8)]{ggsu}). Flowing backwards in time with $\varphi_t$, we know that $(x,\xi)$ converges exponentially fast towards a point $(p,\zeta) \in \partial_-S^*\wt{M}$ (see \cite[Equation (2.11)]{ggsu}) in the sense that there exists a constant $C$ (uniform in the choice of points) such that:
\[ \forall t \leq 0, ~~\rho(t)\leq C\rho(0)e^{-|t|} = \eps C e^{-|t|} \]
In particular, the time $\tau_-(x,\xi)$ taken by the point $(x,\xi)$ to reach $(p,\zeta)$ with the flow $\overline{\varphi}_\tau^1$ is (see equation (\ref{eq:xbar})):
\[ \tau_-(x,\xi) = \int_{-\infty}^0 \rho(t) dt \leq C \eps \]

We also know, according to Hamilton's equations (see (2.8) in \cite{ggsu}) that
\[\dot{\rho}(0) = \rho^2(0) \xi_0(0) = \eps \sin(\omega),\]
where $\omega$ satisfies $\overline{\xi}_0(0) = \rho \xi_0(0) = \sin(\omega) = |g_1(\xi,\nu_1(x))|$. Let us fix an integer $N > 0$ and assume that $\eps^N \leq \omega \leq \pi-\eps^N$. Then $\dot{\rho}(0) \geq 2/\pi \cdot \eps^{N+1}$ so there exists an interval $[0,\delta]$ such that for $t \in [0,\delta]$:
\[  \eps + t/\pi \cdot \eps^{N+1} \leq \eps + t/2 \cdot \dot{\rho}(0) \leq \rho(t) \leq 2\eps\]
In particular, $\rho(\delta) \geq \eps + \delta/\pi \cdot \eps^{N+1}$.

We go back to the flow $\overline{\varphi}_\tau^1$. By our previous remark, we know that there exists a time
\[ \tau_0 \leq C\eps+\int_0^\delta \rho(t)dt \leq C' \eps, \]
such that $\rho(\overline{\varphi}_{\tau_0}^1(p,\zeta)) \geq \eps + \delta/\pi \cdot \eps^{N+2}$. But since $g_1 = g_2 + \mathcal{O}(\rho^\infty)$, we know that $X_1 = X_2 + \mathcal{O}(\rho^\infty)$ and $\overline{X}_1 = \overline{X}_2 + \mathcal{O}(\rho^\infty)$. Moreover, since the scattering maps agree according to Proposition \ref{prop:scat}, we know that the two geodesics $\alpha_1(p,q)$ and $\alpha_2(p,q)$ are both generated by $(p,\zeta)$. As a consequence, one has: $\rho(\overline{\varphi}_\tau^1(p,\zeta)) = \rho(\overline{\varphi}_\tau^2(p,\zeta))+\mathcal{O}(\tau^\infty)$ (the remainder being independent of $(p,\zeta)$). In particular, since $\tau_0 \leq C'\eps$, there exists a constant $C'' > 0$ such that
\[ |\rho(\overline{\varphi}_{\tau_0}^1(p,\zeta)) - \rho(\overline{\varphi}_{\tau_0}^2(p,\zeta))|\leq C''\eps^{N+2}\]
Thus:
\[ \rho(\overline{\varphi}_{\tau_0}^2(p,\zeta)) \geq \eps + \frac{\delta}{\pi}\eps^{N+1}-C''\eps^{N+2} > \eps, \]
if $\eps$ is small enough.  

\end{proof}

\begin{wrapfigure}[14]{r}{5cm}
\includegraphics[width=5cm]{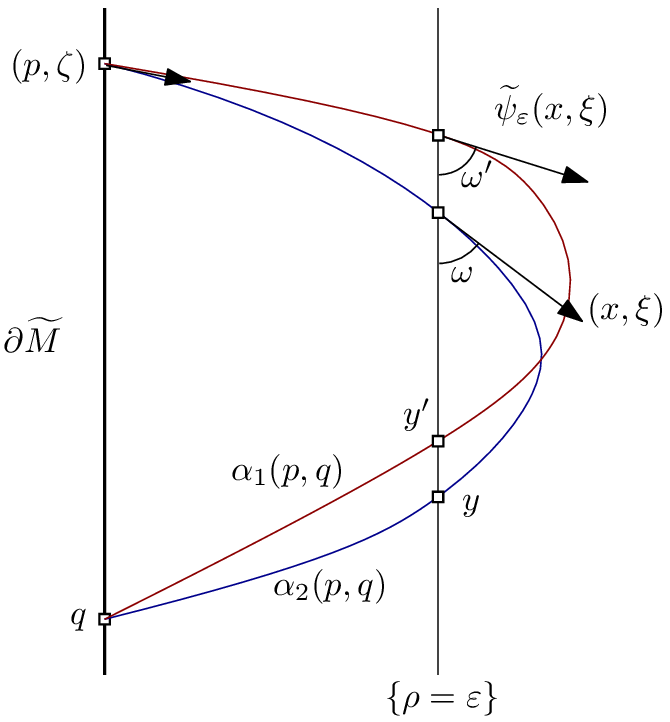}
\caption{The diffeomorphism $\wt{\psi}_\eps$}
\end{wrapfigure}
In the following, we assume that such an integer $N$ is fixed (and taken large enough) and we apply the previous lemma with $N+1$, that is $\omega_\eps = \eps^{N+1}$. \\

This allows us to define a map $\wt{\psi}$ on $\mathcal{U} := \left\{ (x,\xi(\omega)) \in S^*\widetilde{M}^1,\overline{\xi}_0 \geq 0, \omega \in [\rho(x)^{N+1}, \pi-\rho(x)^{N+1}] \right\}$, in the following way: to a point $(x,\xi) \in \mathcal{U}$, which we see as a boundary point $(x,\xi(\omega)) \in \partial_-S^*\widetilde{M}^1_{\eps}$ for $\eps = \rho(x)$, we associate the boundary point $(x',\xi') = \wt{\psi}(x,\xi)$ such that $\wt{\psi}(x,\xi) \in \partial_-S^*\widetilde{M}^2_{\eps}$ is the point on the $g_2$-geodesic connecting $p$ to $q$. A formal way to define $\wt{\psi}$ is to introduce another diffeomorphism $\mathrm{p}_1 : \mathcal{U} \rightarrow \partial_-S^*\wt{M} \times [0,\infty)$ such that $\mathrm{p}_1(x,\xi) = \left(\overline{\varphi}^1_{\tau_-(x,\xi)}(x,\xi),\rho(x)\right)$ and to set
\be \wt{\psi}(x,\xi) = \mathrm{p}^{-1}_2 \circ \mathrm{p}_1 (x,\xi) = \overline{\varphi}^2_{\tau_\rho}\left(\overline{\varphi}^1_{\tau_-(x,\xi)}(x,\xi)\right),\label{eq:p1p2}\ee
where $\mathrm{p}_2$ is defined in the same fashion and $\tau_\rho$ is the time taken to reach the hypersurface $\left\{\rho=\rho(x)\right\}$. Note that $\wt{\psi}(x,\xi)$ exists according to the previous lemma and this point is well-defined (it is unique) according to Lemma \ref{lem:conv}. Moreover, it is smooth on $\mathcal{U}$ thanks to the results of Section \ref{sssect:trapped} (this mainly follows from the implicit function theorem). Eventually, it is invariant by the action of the fundamental group and descends on the base as a map $\psi$. We write $\mathcal{U}_\eps := \mathcal{U} \cap \left\{\rho=\eps\right\}$. What we need, is to prove that $\wt{\psi}$ is the identity plus a small remainder or, more precisely, if we denote by $\wt{\psi}_\eps = \wt{\psi}|_{\mathcal{U}_\eps}$, that $\wt{\psi}_\eps = \text{Id} + \mathcal{O}(\eps^\infty)$ in the $\mathcal{C}^1$-topology.




\begin{lemm}\label{lem:est2}
$||\wt{\psi}_\eps-\text{Id}~ ||_{\mathcal{C}^1} = \mathcal{O}(\eps^\infty)$.
\end{lemm}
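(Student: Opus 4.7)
The plan is to unpack the explicit formula \eqref{eq:p1p2},
\[ \wt{\psi}(x,\xi) = \overline{\varphi}^2_{\tau_\rho}\bigl(\overline{\varphi}^1_{\tau_-(x,\xi)}(x,\xi)\bigr), \]
and compare it with the tautological identity $(x,\xi) = \overline{\varphi}^1_{-\tau_-(x,\xi)}\bigl(\overline{\varphi}^1_{\tau_-(x,\xi)}(x,\xi)\bigr)$. Setting $p := \overline{\varphi}^1_{\tau_-(x,\xi)}(x,\xi) \in \partial_-S^*\wt{M}$ and $\tau := -\tau_-(x,\xi) > 0$, the problem reduces to showing
\[ \overline{\varphi}^2_{\tau_\rho}(p) - \overline{\varphi}^1_{\tau}(p) = \mathcal{O}(\eps^\infty) \text{ in } \mathcal{C}^1(\mathcal{U}_\eps). \]
Two inputs drive the argument: on the one hand Remark \ref{rem:infty} gives $\|\overline{\varphi}^1_t - \overline{\varphi}^2_t\|_{\mathcal{C}^k(\partial_- S^*M)} = \mathcal{O}(t^\infty)$ for every $k$; on the other hand, the exponential convergence of backward trajectories to the boundary used in the proof of Lemma \ref{lem:est1} yields $\tau = \mathcal{O}(\eps)$ uniformly on $\mathcal{U}_\eps$.

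I would first split the difference as
\[ \overline{\varphi}^2_{\tau_\rho}(p) - \overline{\varphi}^1_{\tau}(p) = \bigl[\overline{\varphi}^2_{\tau_\rho}(p) - \overline{\varphi}^2_{\tau}(p)\bigr] + \bigl[\overline{\varphi}^2_{\tau}(p) - \overline{\varphi}^1_{\tau}(p)\bigr]. \]
The right bracket is $\mathcal{O}(\eps^\infty)$ by Remark \ref{rem:infty}, since $p \in \partial_-S^*\wt{M}$ and $\tau = \mathcal{O}(\eps)$. For the left bracket it suffices to control $\tau_\rho - \tau$. Composing Remark \ref{rem:infty} with the smooth function $\rho$ and using $\rho(\overline{\varphi}^1_\tau(p)) = \eps$ gives $\rho(\overline{\varphi}^2_{\tau}(p)) = \eps + \mathcal{O}(\eps^\infty)$; since $\partial_\tau \rho(\overline{\varphi}^2_s(p)) = 1 + \mathcal{O}(\eps)$ by Lemma \ref{lem:xbar}, the implicit function theorem applied to the defining equation $\rho(\overline{\varphi}^2_{\tau_\rho}(p)) = \eps$ produces $\tau_\rho - \tau = \mathcal{O}(\eps^\infty)$, and the left bracket is therefore also $\mathcal{O}(\eps^\infty)$ at the $\mathcal{C}^0$ level.

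The $\mathcal{C}^1$ upgrade follows by differentiating each of the above steps with respect to $(x,\xi) \in \mathcal{U}_\eps$. The implicitly-defined times $\tau_-(x,\xi)$ and $\tau_\rho(x,\xi)$ depend smoothly on their arguments, with derivatives bounded uniformly on $\mathcal{U}_\eps$: the angular cutoff $\omega \geq \eps^{N+1}$ from Lemma \ref{lem:est1} keeps us away from glancing, and Lemma \ref{lem:xbar} ensures transversality of $\overline{X}_i$ to $\{\rho=\eps\}$. Since Remark \ref{rem:infty} is stated in arbitrary $\mathcal{C}^k$ norms, the chain-rule expansion of $\wt{\psi}_\eps - \text{Id}$ only produces factors that are either $\mathcal{O}(1)$ or $\mathcal{O}(\eps^\infty)$, and the claim follows.

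The main obstacle here is not conceptual but quantitative: one must verify that all implicitly-defined quantities and their derivatives remain uniformly bounded as $\eps \to 0$ on $\mathcal{U}_\eps$. This uniformity rests simultaneously on the angular cutoff built into $\mathcal{U}$ and on the infinite-order jet agreement of the extended vector fields $\overline{X}_1$ and $\overline{X}_2$ at the boundary; without both ingredients, the propagation of $\mathcal{O}(\eps^\infty)$ through the chain rule would break down.
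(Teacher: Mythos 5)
Your reduction to the formula \eqref{eq:p1p2} and your treatment of the $\mathcal{C}^0$ estimate follow the paper's route and are fine. The gap is in the $\mathcal{C}^1$ upgrade. What must be shown is that $\partial_z\bigl(\tau_\rho + \tau_-\bigr) = \mathcal{O}(\eps^\infty)$ for $z$ ranging over the coordinates on $\mathcal{U}_\eps$; your proposal only offers that $\tau_\rho$ and $\tau_-$ "depend smoothly with derivatives bounded uniformly on $\mathcal{U}_\eps$," and then asserts that the chain rule produces only $\mathcal{O}(1)$ and $\mathcal{O}(\eps^\infty)$ factors. But uniform boundedness of the two derivatives separately gives only $\partial_z(\tau_\rho+\tau_-) = \mathcal{O}(1)$, which is useless here: in the expansion of $d\wt{\psi}_\eps - d(\mathrm{Id})$ the term $\frac{\partial\tau_\rho}{\partial z}\overline{X}_2 + \frac{\partial\tau_-}{\partial z}\overline{X}_1$ is not a product of an $\mathcal{O}(\eps^\infty)$ factor with something bounded — it is a \emph{sum} whose smallness rests entirely on a cancellation between the two time-derivatives that you never establish. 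Moreover, the uniform boundedness you invoke is itself delicate: the implicit equation $\rho(\overline{\varphi}^2_{\tau_\rho}(p,\zeta))=\eps$ has transversality factor $d\rho(\overline{X}_2)=\sin\omega'$, which on $\mathcal{U}_\eps$ is only bounded below by roughly $\eps^{N+1}$, so "Lemma \ref{lem:xbar} ensures transversality" does not give a uniform constant.

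The missing step (which is the actual content of the paper's proof) is to differentiate the two implicit equations $\rho(\overline{\varphi}^1_{-\tau_-}(p,\zeta)) = \eps = \rho(\overline{\varphi}^2_{\tau_\rho}(p,\zeta))$ simultaneously in $z$. The terms $d\rho\bigl(d\overline{\varphi}^1_{-\tau_-}(d_z(p,\zeta))\bigr)$ and $d\rho\bigl(d\overline{\varphi}^2_{\tau_\rho}(d_z(p,\zeta))\bigr)$ agree to $\mathcal{O}(\eps^\infty)$ by Remark \ref{rem:infty}, as do $d\rho(\overline{X}_1)$ and $d\rho(\overline{X}_2)$, which yields $\bigl(\partial_z\tau_- + \partial_z\tau_\rho\bigr)\,d\rho(\overline{X}_1) = \mathcal{O}(\eps^\infty)$. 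One then divides by $d\rho(\overline{X}_1)\geq \eps^{N}$ — small, but only polynomially small thanks to the angular cutoff — and the quotient is still $\mathcal{O}(\eps^\infty)$. So the cutoff's role is not to make derivatives uniformly bounded, but to ensure that the degenerate transversality factor is at worst polynomial in $\eps$, so that it is absorbed by the $\mathcal{O}(\eps^\infty)$ numerator. Without this cancellation-then-divide argument your $\mathcal{C}^1$ claim does not follow.
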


\begin{proof}
Since the two trajectories are $\mathcal{O}(\eps^\infty)$ close, so will be the times $\tau_\rho$ and $-\tau_-(x,\xi)$ by which the $g_1$- and $g_2$-geodesics generated by $(p,\zeta)$ hit $\left\{\rho=\eps\right\}$ (this can be proved by contradiction for instance, like in the proof of Lemma \ref{lem:est1}), which implies that $\wt{\psi}_\eps(x,\xi)=(x,\xi) + \mathcal{O}(\eps^\infty)$, where the remainder is uniform in $(x,\xi)$. To obtain a bound on the derivatives, we see from the expression (\ref{eq:p1p2}) and the fact that the two flows are $\mathcal{O}(\eps^\infty)$ close in the $\mathcal{C}^1$-topology (Remark \ref{rem:infty}), that it is sufficient to show that the times satisfy $\tau_\rho(x,\xi) = -\tau_-(x,\xi) + \mathcal{O}(\eps^\infty)$ in the $\mathcal{C}^1$-topology with a uniform remainder. Let $(p,\zeta) = \overline{\varphi}^1_{\tau_-(x,\xi)}(x,\xi)$. We have
\[ \rho(\overline{\varphi}^1_{-\tau_-(x,\xi)}(p,\zeta)) = \eps = \rho(\overline{\varphi}^2_{\tau_\rho}(p,\zeta)) \]
We are interested in the variations of $x$ along $\left\{\rho=\eps\right\}$ and of the angle $\xi(\omega)$. If we denote by $z$ any of these two parameters, we get by derivating the previous equality:
\[ -\dfrac{\partial \tau_-}{\partial z} d\rho(\overline{X}_1) + d\rho(d\overline{\varphi}^1_{-\tau_-}(d_z(p,\zeta))) = \dfrac{\partial \tau_\rho}{\partial z} d\rho(\overline{X}_2) + d\rho(d\overline{\varphi}^2_{\tau_\rho}(d_z(p,\zeta)))\]
The two terms containing the differential of the flow coincide to order $\mathcal{O}(\eps^\infty)$ and we also have $d\rho(\overline{X}_2) = d\rho(\overline{X}_1) + \mathcal{O}(\eps^\infty)$ by Remark \ref{rem:infty}. Thus:
\[ \left( -\dfrac{\partial \tau_-}{\partial z} - \dfrac{\partial \tau_\rho}{\partial z}\right)d\rho(\overline{X}_1) = \mathcal{O}(\eps^\infty) \]
But $d\rho(\overline{X}_1)$ is precisely the sine of the angle with which the geodesic generated by $(p,\zeta)$ enters the set $\left\{\rho\geq\eps\right\}$ and this angle is contained in $[\eps^N,\pi-\eps^N]$ by construction of the set $\mathcal{U}$, so $d\rho(\overline{X}_1) \geq \eps^N$. By dividing by $d\rho(\overline{X}_1)$, this term is swallowed in the $\mathcal{O}(\eps^\infty)$, which provides the sought result.
\end{proof}

Given $(x, \xi) \in \partial_-S^*\wt{M}^i_\eps$, we denote by $l^i_{\eps,+}(x,\xi)$ the length of the geodesic generated by this point in $\wt{M}_\eps$. Note that by strict convexity of the sets $\left\{\rho\geq\eps\right\}$ the intersections of the geodesics (for both metrics) with $\wt{M}_\eps$ have a single connected component, so this length is well-defined.

\begin{lemm}
\label{lem:est3}
$||l^1_{\eps,+} - l^2_{\eps,+}\circ\wt{\psi}_\eps||_{\mathcal{C}^0} =\mathcal{O}(\eps^\infty)$, where the sup is computed over $\partial_-S^*\widetilde{M}^1_{\eps} \setminus \Gamma_-^1$.
\end{lemm}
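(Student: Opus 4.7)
The plan is to use the identity of renormalized lengths $L_1 = L_2$, combined with the infinite-order agreement $g_1 = g_2 + \mathcal{O}(\rho^\infty)$ near the boundary, to reduce the difference to a remainder whose Taylor coefficients at $\eps = 0$ depend only on the $\mathcal{C}^\infty$-jet of the metric on $\partial M$, which is the same for both metrics.

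First I would invoke Proposition \ref{prop:scat} together with the discussion of Section \ref{ssect:psi}: since the scattering maps satisfy $\sigma_1 = \sigma_2$, the points $z_1 = (x,\xi) \in \partial_- S^*\wt{M}^1_\eps$ and $z_2 := \wt{\psi}_\eps(z_1) \in \partial_- S^*\wt{M}^2_\eps$ lie on $g_i$-geodesics sharing a common boundary generator $(p,\zeta) \in \partial_- S^*\wt{M}$ and the same pair of endpoints $(p,q) \in \partial \wt{M} \times \partial \wt{M}$. With $(p,\zeta)$ fixed, set
\[
R_i(\eps, (p,\zeta)) := l^i_{\eps,+}(z_i) + 2\ln \eps - L_i(p,q).
\]
Section \ref{sssect:def} only gives $R_i(\eps, \cdot) \to 0$ as $\eps \to 0$; I would make this quantitative by redoing that computation. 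Splitting the integral $\int d\tau/\rho(\overline{\varphi}^i_\tau(p,\zeta))$ into a near-past piece, a middle piece of fixed length, and a near-future piece, and changing variable $u = \rho(\overline{\varphi}^i_\tau(p,\zeta))$ near each endpoint, one uses that $\overline{X}_i = \partial_\rho + \rho Y$ on the boundary (Lemma \ref{lem:xbar}) to write the integrand as $1/u$ plus a smooth function of $u$. Integration then produces $-\ln \eps + \ln \rho(\delta)$ plus a smooth function of $\eps$ vanishing at $0$; summing over the two endpoint caps and absorbing the (constant, $\eps$-independent) middle into $L_i(p,q)$ shows that $R_i(\cdot,(p,\zeta))$ extends smoothly to $\eps = 0$ with $R_i(0,(p,\zeta)) = 0$, and that its Taylor series at $\eps = 0$ is a universal polynomial in the Taylor jet of $\tau \mapsto \rho(\overline{\varphi}^i_\tau(p,\zeta))$ at $\tau = 0$ (and, after time reversal from $(q,\zeta')$, at $\tau = 0$ again for the future endpoint, where $(q,\zeta')$ is independent of $i$ by Proposition \ref{prop:scat}).

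By Remark \ref{rem:infty}, $\overline{\varphi}^1_\tau - \overline{\varphi}^2_\tau = \mathcal{O}(\tau^\infty)$ in every $\mathcal{C}^k$-norm on $\partial_\pm S^*\wt{M}$, so the Taylor jets of $\tau \mapsto \rho(\overline{\varphi}^i_\tau(p,\zeta))$ at the two endpoints coincide between $i = 1, 2$. Consequently the full Taylor series of $R_1$ and $R_2$ at $\eps = 0$ agree term by term, so
\[
R_1(\eps, (p,\zeta)) - R_2(\eps, (p,\zeta)) = \mathcal{O}(\eps^\infty).
\]
Uniformity in $(p,\zeta)$ comes from the fact that all the constructions are $\pi_1(M)$-equivariant and smooth down to the boundary, so the Taylor remainders descend to and are controlled on the compact quotient $\partial_- S^*M$. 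Using $L_1(p,q) = L_2(p,q)$ (after the reduction of Section \ref{ssect:kappa}), this yields $l^1_{\eps,+}(z_1) - l^2_{\eps,+}(\wt{\psi}_\eps(z_1)) = \mathcal{O}(\eps^\infty)$ uniformly in $z_1$.

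The main obstacle is justifying that $R_i$ is really a smooth function of $\eps$ at $\eps = 0$ whose Taylor coefficients are determined purely by the boundary data shared by the two metrics. This is essentially a more careful bookkeeping of the asymptotics in the proof of Section \ref{sssect:def}; once the change of variable $u = \rho(\overline{\varphi}^i_\tau(p,\zeta))$ has cleanly isolated the $-2\ln \eps$ singularity, everything else becomes a smooth function of $\eps$ whose full Taylor expansion is the same for both metrics by Remark \ref{rem:infty}, and the $\mathcal{O}(\eps^\infty)$ conclusion is then immediate.
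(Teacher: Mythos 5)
Your proposal is correct and follows essentially the same route as the paper: the paper likewise uses $\sigma_1=\sigma_2$ to put both geodesics on the common generator $(p,\zeta)$, decomposes $D_i(p,q)$ into the $M_\eps$-segment plus two endpoint caps, and shows the caps agree to $\mathcal{O}(\eps^\infty)$ via the change of variable $u=\rho(\overline{\varphi}^i_s(z))$ and the fact that $\psi_1^{-1}(u)=\psi_2^{-1}(u)+\mathcal{O}(u^\infty)$, which is exactly your Taylor-jet comparison of the remainders $R_i$. The only cosmetic difference is that the paper phrases the cap comparison as $D_1(p,x)=D_2(p,x')+\mathcal{O}(\eps^\infty)$ using the extended renormalized distance rather than your $R_i$ notation.
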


\begin{proof}[Proof]
Recall that $(p,\zeta) \in \partial_-S^*\wt{M}$ is the point obtained by flowing backwards $(x,\xi)$ down to the boundary. If $D_i$ denotes the renormalized distance for both metrics, then we have:
\[ D_1(p,x) = D_2(p,x'(x,\omega)) + \mathcal{O}(\eps^\infty), \]
where the remainder is independent of $(x,\xi)$. Indeed, considering $0 < \eps' < \eps$, and denoting by $\alpha_1(p,x)$ the $g_1$-geodesic joining $p$ to $x$, one has:
\[ \begin{split} l_1(\alpha_1(p,x) \cap \left\{\rho > \eps'\right\}) + \ln \eps' & = \int_{\tau^1_{\eps'}}^{\tau^1_\eps} \dfrac{ds}{\rho(\overline{\varphi}^1_s(z))} +  \ln \eps' \\ & = \int_{\eps'}^\eps \dfrac{(\psi_1^{-1})'(u)du}{u} + \ln \eps', \end{split} \]
where $\tau^1_\eps$ and $\tau^1_{\eps'}$ are defined such that $\rho(\overline{\varphi}^1_{\tau^1_\eps}(z))=\eps, \rho(\overline{\varphi}^1_{\tau^1_{\eps'}}(z))=\eps'$, and $\psi_1 : s \mapsto \rho(\overline{\varphi}^1_s(z))$ is a diffeomorphism. Note that $\psi_1(0)=0, \psi_1'(0)=1$. By assumption, the two metrics are close, thus $\psi_1(s) = \psi_2(s) + \mathcal{O}(s^\infty)$ and one can check (by induction) that this implies that $(\psi_1^{-1})^{(k)}(0)=(\psi_2^{-1})^{(k)}(0)$ for all $k \in \N$, that is $\psi_1^{-1}(u) = \psi_2^{-1}(u) + \mathcal{O}(u^{\infty})$. Inserting this into the previous integral expression, we get the claimed result.

The same occurs for the other bits of the geodesics: namely, if $y$ and $y'$ denote the exit points of $\alpha_1(p,q)$ and $\alpha_2(p,q)$ in $\wt{M}_\eps$, then $D_1(q,y) = D_2(q,y') + \mathcal{O}(\eps^\infty)$. Now, using the fact that the renormalized lengths agree on the boundary, we obtain:
\[ \begin{split} D_1(p,q) & = D_1(p,x) + d_1(x,y) + D_1(y,q) \\
& = D_1(p,x) + l_{\eps,+}^1(x,\xi) + D_1(y,q) \\
& = D_2(p,q) \\
& = D_2(p,x') + l_{\eps,+}^2(\wt{\psi}_\eps(x,\xi)) + D_2(y',q) \end{split} \]
Thus: $l^1_{\eps,+}(x,\xi) = l^2_{\eps,+}(\wt{\psi}_\eps(x,\xi)) + \mathcal{O}(\eps^\infty)$.
\end{proof}

\subsection{The average angle deviation}

\label{ssect:av}

The angle of deviation $\wt{f}$ satisfies two elementary properties:

\begin{lemm}
\label{lem:f}
\begin{enumerate}
\item It is $\pi$-symmetric, that is, for almost all $(x,\xi) \in S^*\widetilde{M}_1, \theta \in [0,\pi]$,
\be \label{eq:pisym} \wt{f}(x,\xi,\theta) = \pi - \wt{f}(x,R_\theta \xi,\pi-\theta) \ee
\item It is superadditive in the sense that, for almost all $(x,\xi) \in S^*\widetilde{M}_1, \theta_1, \theta_2 \in [0,\pi]$ such that $\theta_1 + \theta_2 \in [0,\pi]$,
\be \label{eq:superadd} \wt{f}(x,\xi,\theta_1) + \wt{f}(x,R_{\theta_1}\xi,\theta_2) \leq \wt{f}(x,\xi,\theta_1+\theta_2) \ee
\end{enumerate}
\end{lemm}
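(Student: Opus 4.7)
The plan is to unwind the definitions carefully for part (1), and to carry out a Gauss--Bonnet argument on a $g_2$-geodesic triangle for part (2).

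For (1), start from the triple $(x, R_\theta \xi, \pi - \theta)$. The two $g_1$-directions at $x$ it produces are $R_\theta \xi$ and $R_{\pi - \theta}(R_\theta \xi) = R_\pi \xi = -\xi$, so the two unoriented $g_1$-geodesics it determines are exactly those attached to $(x,\xi,\theta)$, with the geodesic carrying $\xi$ now traversed in the opposite direction. The two associated $g_2$-geodesics are therefore the same two curves, and intersect at the same point. By construction the deviation angle is the (counterclockwise) angle measured from the first $g_2$-geodesic to the second one, so reversing the orientation of the first curve replaces $\wt{f}$ by its supplement, which is exactly (\ref{eq:pisym}).

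For (2), consider the three $g_1$-geodesics through $x$ with $g_1$-directions $\xi$, $R_{\theta_1}\xi$, $R_{\theta_1 + \theta_2}\xi$. They are concurrent at $x$ in $(\wt{M}, \wt{g}_1)$, which forces their six endpoints on $\partial \wt{M}$ to be cyclically interleaved, say in cyclic order $p_1, p_2, p_3, q_1, q_2, q_3$ (where $q_i$ is the forward endpoint of the $i$-th geodesic and $p_i$ the backward one). Since this interleaving only depends on the cyclic order on $\partial \wt{M}$ and not on the metric, the three corresponding $g_2$-geodesics $\beta_1, \beta_2, \beta_3$ still pairwise intersect in the simply connected negatively curved surface $(\wt{M}, \wt{g}_2)$, producing a (possibly degenerate) geodesic triangle $T$ with vertices $P_{12}, P_{23}, P_{13}$.

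By the Gauss--Bonnet formula and the strict negativity of the curvature of $\wt{g}_2$, the sum of the interior angles of $T$ is at most $\pi$, with equality if and only if $T$ is degenerate, i.e.\ the three $\beta_i$ are concurrent. The remaining step is to identify these interior angles with the deviation angles appearing in the statement: tracking the cyclic order $p_1, p_2, p_3, q_1, q_2, q_3$ together with the orientation convention used in (\ref{eq:kappa}) to define $\wt{f}$, one checks that the interior angles of $T$ at $P_{12}$, $P_{23}$ and $P_{13}$ are respectively $\wt{f}(x,\xi,\theta_1)$, $\wt{f}(x, R_{\theta_1}\xi, \theta_2)$ and $\pi - \wt{f}(x, \xi, \theta_1 + \theta_2)$. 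Plugging these into the Gauss--Bonnet inequality and rearranging yields exactly (\ref{eq:superadd}).

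The main obstacle is precisely this last identification: at each vertex of $T$ the interior angle is either the corresponding $\wt{f}$ or its supplement $\pi - \wt{f}$, and exactly one of the two choices is realised at each vertex, determined by which sides of $\beta_i$ the triangle lies on. Once this bookkeeping is settled --- with the sanity check that in the degenerate limit ($g_1=g_2$, $T$ collapsing to $x$) one recovers $\theta_1 + \theta_2 + (\pi - (\theta_1 + \theta_2)) = \pi$ --- the superadditivity drops out of Gauss--Bonnet.
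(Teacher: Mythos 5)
Your proposal is correct and follows essentially the same route as the paper: part (1) by unwinding the definition, and part (2) by applying Gauss--Bonnet to the $g_2$-geodesic triangle cut out by the three image geodesics, identifying its interior angles with the deviation angles (the paper writes the third angle as $\wt{f}(x,R_{\theta_1+\theta_2}\xi,\pi-\theta_1-\theta_2)$ and then invokes $\pi$-symmetry, which is exactly your $\pi - \wt{f}(x,\xi,\theta_1+\theta_2)$). Your added remarks on the interleaving of endpoints forcing the pairwise intersections, and on the angle-versus-supplement bookkeeping, are details the paper leaves implicit.
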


We will denote by $\mathrm{h} : \mathcal{G}_1 \rightarrow \mathcal{G}_2$ the map that associates to a $g_1$-geodesic with endpoints $z, z' \in \widetilde{M}$ the $g_2$-geodesic with same endpoints. Note that when $\mathcal{G}_1$ and $\mathcal{G}_2$ are identified with $\partial \wt{M} \times \partial \wt{M}$, $\mathrm{h}$ is simply the identity, but we will rather see $\mathcal{G}_i$ as the set of geodesics connecting two boundary points. 

\begin{proof}[Proof]
The $\pi$-symmetry is obtained from the very definition of $\wt{f}$. As to the superadditivity, it follows from Gauss-Bonnet formula in negative curvature. Indeed, consider the three geodesics $\alpha_1,\beta_1,\gamma_1$ of $\widetilde{M}_1$, respectively carried by the points $(x,\xi), (x,R_{\theta_1}\xi), (x,R_{\theta_1+\theta_2}\xi)$. Their image by $\mathrm{h}$ (that is the corresponding $g_2$-geodesics with same endpoints) are three geodesics $\alpha_2 = h(\alpha_1), \beta_2 = h(\beta_2), \gamma_2 = h(\gamma_2)$, forming a geodesic triangle which we denote by $T$, with angles 
\[\wt{f}(x,\xi,\theta_1), \wt{f}(x,R_{\theta_1}\xi,\theta_2), \wt{f}(x,R_{\theta_1+\theta_2}\xi,\pi-\theta_1-\theta_2)\]
Now, we have by Gauss-Bonnet formula:
\be 0 \geq \int_T \kappa ~~ d\text{vol}_g = \wt{f}(x,\xi,\theta_1) + \wt{f}(x,R_{\theta_1}\xi,\theta_2) + \wt{f}(x,R_{\theta_1+\theta_2}\xi,\pi-\theta_1-\theta_2)  - \pi \ee
Using $\pi$-symmetry, we obtain inequality (\ref{eq:pisym}). 
\end{proof}

Note that the inequality (\ref{eq:superadd}) is saturated if and only if the geodesic triangle is degenerate, that is it is reduced to a single point, since the curvature is negative. As mentioned previously, $\wt{f}$ descends on the base as a function $f$ which also satisfies the properties of Lemma \ref{lem:f}.

One of the ideas of Otal was to introduce the \textit{average angle of deviation}. Since we work in a non-compact setting, we are forced to consider partial averages depending on $\eps$. We define for fixed $\eps > 0$:
\be \Theta_\eps(\theta) := \dfrac{1}{\text{vol}_{g_1}(S^*M^1_\eps)} \int_{S^*M^1_\eps} f(x,\xi,\theta) d\mu_1(x,\xi) \ee
It also satisfies
\be \label{eq:bord} \Theta_\eps(0) = 0, \Theta_\eps(\pi)=\pi \ee
Since the rotations $R_\theta$ preserve the Liouville measure, by integrating over $S^*M^1_\eps$ the relations (\ref{eq:pisym}) and (\ref{eq:superadd}) given in Lemma \ref{lem:f}, we see that $\Theta_\eps$ also satisfies the $\pi$-symmetry: 
\be \label{eq:pisym2} \forall \theta \in [0,\pi], ~~~ \Theta_\eps(\theta) = \pi - \Theta_\eps(\pi-\theta), \ee
and the superadditivity:
\be \label{eq:superadd2} \forall \theta_1, \theta_2 \in [0,\pi], \text{ s.t. } \theta_1 + \theta_2 \in [0,\pi], ~~~ \Theta_\eps(\theta_1) + \Theta_\eps(\theta_2) \leq \Theta_\eps(\theta_1 + \theta_2) \ee

We now show that $\Theta_\eps$ satisfies the following

\begin{lemm}
\label{lem:est4}
Let $J : [0, \pi] \rightarrow \R$ be a convex continuous function. Then:
\be \label{eq:conv} \int_0^\pi J(\Theta_\eps(\theta)) \sin(\theta) d\theta \leq \int_0^\pi J(\theta) \sin(\theta) d\theta + ||J||_{L^\infty} \mathcal{O}(\eps^N), \ee
where the remainder only depends on $\eps$, $N$ is fixed by Lemma \ref{lem:est1}.
\end{lemm}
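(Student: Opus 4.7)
The plan is to first apply Jensen's inequality to the convex function $J$ and the probability measure $V_\eps^{-1} d\mu_1$ on $S^*M_\eps^1$, where $V_\eps := \mathrm{vol}_{g_1}(S^*M_\eps^1)$. Pointwise in $\theta$:
\[
J(\Theta_\eps(\theta)) \leq \frac{1}{V_\eps} \int_{S^*M_\eps^1} J(f(x,\xi,\theta))\, d\mu_1(x,\xi),
\]
and integrating against $\sin(\theta)\, d\theta$ yields
\[
\int_0^\pi J(\Theta_\eps(\theta))\sin\theta\, d\theta \leq \frac{1}{V_\eps} \int_{A_\eps} J(f)\sin\theta\, d\mu_1\, d\theta, \qquad A_\eps := S^*M_\eps^1 \times (0,\pi).
\]

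The central identity is then the Crofton-type pullback formula $\kappa^*(d\mu_2 \sin f\, df) = d\mu_1 \sin\theta\, d\theta$ on $S^*M \times (0,\pi)$. This is obtained by lifting to $\widetilde{M}$: the parametrization $\Psi_i : (x,\xi,\theta) \mapsto (\alpha_1^i,\alpha_2^i)$ of ordered intersecting pairs of $g_i$-geodesics satisfies $\Psi_i^*(d\eta_i \otimes d\eta_i) = d\mu_i \sin\theta_i\, d\theta_i$ (a direct consequence of the identity $\phi^*\eta_i = \sin\theta\, d\theta\, d\tau$ of Section \ref{sect:liouv} combined with the product decomposition $d\mu_i = d\eta_i \otimes dt$); since $\Psi_2 \circ \kappa = \Psi_1$ after identifying $\mathcal{G}_1$ and $\mathcal{G}_2$ with $(\partial\widetilde{M})^2 \setminus \mathrm{diag}$, and $\eta_1 = \eta_2$ by Lemma \ref{lem:agree}, the pullback formula follows and descends to $M$. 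Writing $B_\eps := S^*M_\eps^2 \times (0,\pi)$, Fubini gives $\int_{B_\eps} J(f)\sin f\, d\mu_2\, df = V_\eps^2 \int_0^\pi J(\theta)\sin\theta\, d\theta$ where $V_\eps^2 := \mathrm{vol}_{g_2}(S^*M_\eps^2)$, and the relation $V_\eps - V_\eps^2 = \mathcal{O}(\eps^\infty)$ follows from $g_1 = g_2 + \mathcal{O}(\rho^\infty)$. Decomposing $\kappa(A_\eps) = (\kappa(A_\eps)\cap B_\eps) \sqcup (\kappa(A_\eps)\setminus B_\eps)$ and undoing the change of variables on the second piece, we arrive at
\[
\int_{\kappa(A_\eps)} J(f)\sin f\, d\mu_2\, df \leq V_\eps^2 \int_0^\pi J(\theta)\sin\theta\, d\theta + \|J\|_\infty \int_{E_\eps}\sin\theta\, d\mu_1\, d\theta,
\]
where $E_\eps := \{(x,\xi,\theta) \in A_\eps : \rho(\mathrm{x}(x,\xi,\theta)) < \eps\}$ is the ``bad'' set of pairs whose $g_1$-intersection is interior but whose $g_2$-intersection falls into the collar.

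The remaining, decisive step is the estimate $\int_{E_\eps}\sin\theta\, d\mu_1\, d\theta = \mathcal{O}(\eps^{N-1})$: dividing by $V_\eps \sim \eps^{-1}$ and combining with $V_\eps^2/V_\eps = 1 + \mathcal{O}(\eps^\infty)$ then gives the announced bound. For $(x,\xi,\theta) \in E_\eps$, the two $g_2$-geodesics cross at $\mathrm{x}$ with $\rho(\mathrm{x}) < \eps$; by Lemma \ref{lem:est2} and the infinite-order agreement of the metrics near $\partial M$, the two $g_1$-geodesics (with the same ideal endpoints) pass within $\mathcal{O}(\eps^\infty)$ of each other near $\mathrm{x}$, yet cannot intersect there since in negative curvature two distinct geodesics on the universal cover meet at most once, and they already meet at $x \in M_\eps$. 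Denoting by $T(x,\xi,\theta)$ the travel time from $x$ to the near-miss region along a $g_1$-geodesic, Jacobi-field analysis using the exponential divergence of the geodesic flow \eqref{eq:growth} forces $\sin\theta \lesssim \eps^\infty e^{-\nu T}$. Splitting $E_\eps$ by the threshold $T_0 := -C\ln\eps$ with $C$ large: on $\{T \leq T_0\}$ one has $\sin\theta = \mathcal{O}(\eps^\infty)$ so this region contributes $\mathcal{O}(\eps^\infty) V_\eps$; on $\{T > T_0\}$ the Liouville measure of geodesics dwelling in $M_\eps$ for time $> T_0$ is controlled by the non-escaping mass bound of Lemma \ref{lem:escape}, giving $\mathcal{O}(\eps^{-(1+4\delta)}e^{-\delta T_0}) = \mathcal{O}(\eps^{\delta C - 1 - 4\delta})$, which is $\mathcal{O}(\eps^{N-1})$ for $C$ sufficiently large. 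Grazing angles $\omega \leq \eps^{N+1}$ are treated separately via Lemma \ref{lem:est1} and the boundary measure estimate $\int_0^{\eps^{N+1}}\sin\omega\, d\omega = \mathcal{O}(\eps^{2(N+1)})$. Assembling these two bounds, each of which balances the asymptotic regularity of the metrics near the cusp against the exponential behaviour in the compact core, is the main technical obstacle of the proof.
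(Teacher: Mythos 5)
Your skeleton — Jensen, then a global Crofton-type change of variables $\kappa^*(\sin f\,df\,d\mu_2)=\sin\theta\,d\theta\,d\mu_1$ deduced from $\phi_i^*\eta_i=\sin\theta\,d\theta\,d\tau$ and $\eta_1=\eta_2$, then an estimate of the mismatch between $\kappa(A_\eps)$ and $B_\eps$ — is a correct global repackaging of the identity the paper uses geodesic by geodesic. Where you genuinely diverge is in how the mismatch is absorbed: the paper disintegrates via Santal\`o's formula over $\partial_-S^*M^1_\eps$, discards grazing inflow angles $\omega\notin[\omega_\eps,\pi-\omega_\eps]$ (this is the sole source of the $\mathcal{O}(\eps^N)$), and for each remaining geodesic compares $\eta_2(\mathrm{h}(\mathcal{F}_1(x,y)))$ with $\eta_2(\mathcal{F}_2(x',y'))$ through the renormalized-length identities (Lemmas \ref{lem:est3} and \ref{lem:est5}) and $\wt{\psi}_\eps=\text{Id}+\mathcal{O}(\eps^\infty)$ in $\mathcal{C}^1$; no pointwise control of the deviation angle is ever needed. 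Your route instead requires a measure bound on the bad set $E_\eps$, and that is where the proof breaks.

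The decisive step, $\sin\theta\lesssim\eps^\infty e^{-\nu T}$ on $E_\eps$ (hence ``$\sin\theta=\mathcal{O}(\eps^\infty)$ for $T\leq T_0$''), is false. Divergence of two geodesics from a common point is linear, not exponential, for small times: the separation at distance $T$ from $x$ is only bounded below by $c\,T\sin\theta$ when $T\lesssim 1$, so the near-miss condition yields $\sin\theta\lesssim\eps^\infty/T$, which is vacuous once $T\lesssim\eps^K$. Concretely, take $\rho(x)=\eps$ and $\theta=\pi/2$: by Lemma \ref{lem:est2} the two $g_2$-geodesics cross at a point $\mathrm{x}$ within $\mathcal{O}(\eps^\infty)$ of $x$, and nothing prevents $\rho(\mathrm{x})<\eps$; such configurations lie in $E_\eps$ with $\sin\theta=1$. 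So $E_\eps$ contains points with $\theta$ bounded away from $\{0,\pi\}$, and the smallness of its measure in the regime $T$ small must come from the thinness of the collar $\{\eps\leq\rho(x)\leq C\eps e^{T}\}$ to which $x$ is then confined (of $\mu_1$-measure $\mathcal{O}(T/\eps)$), not from the angle; a dyadic decomposition in $T$ combining this with $\sin\theta\lesssim\eps^\infty/\min(T,1)$ would close the estimate, but it is absent from your argument, and the large-$T$ appeal to Lemma \ref{lem:escape} does not address it. Two secondary issues: (i) $V_\eps-V_\eps^2=\mathcal{O}(\eps^\infty)$ does not follow from $g_1=g_2+\mathcal{O}(\rho^\infty)$ — pointwise closeness of the volume densities only shows the difference of volumes converges to some constant, a priori nonzero; one needs the Crofton identity itself with $J\equiv1$, as the paper does, to get agreement to order $\mathcal{O}(\eps^N)$; (ii) your displayed inequality for $\int_{\kappa(A_\eps)}$ silently drops the contribution of $B_\eps\setminus\kappa(A_\eps)$, which is legitimate only after normalizing $J\geq0$.
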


\begin{wrapfigure}[15]{r}{8cm}
\includegraphics[width=7cm]{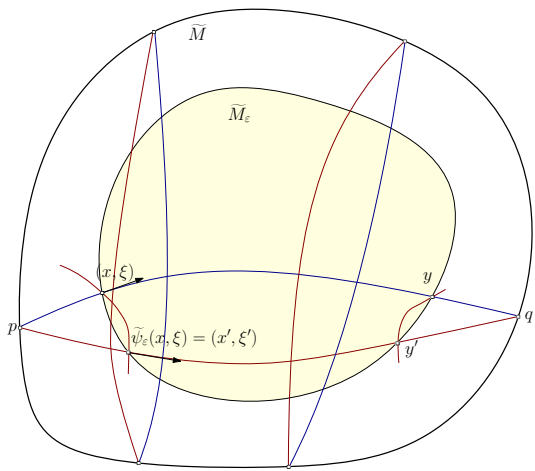}
\caption{A picture of the situation : in red, the $g_2$-geodesics, in blue, the $g_1$-geodesics}
\label{fig:defect}
\end{wrapfigure}
The proof of this lemma relies on the use of Santalo's formula, together with the fact that the Liouville currents coincide. But let us make a preliminary remark. Consider $(x, \xi(\omega)) \in \partial_-S^*\widetilde{M}^1_{\eps}$ with $\omega \in [\omega_\eps,\pi-\omega_\eps]$. It generates the $g_1$-geodesic $\alpha_1(p,q)$ with endpoints $p, q \in \partial \widetilde{M}$ which enters (resp. exits) $\widetilde{M}_\eps$ at $x$ (resp. $y$). We denote by $\alpha_2$ the $g_2$-geodesic joining $p$ and $q$ which enters (resp. exits) $\widetilde{M}_\eps$ at $x'=x'(\wt{\psi}_\eps(x,\xi))$ (resp. $y'$). Let us denote by $\mathcal{F}_1(x,y) \subset \mathcal{G}$ the $g_1$-geodesics which have a positive transverse intersection with the geodesic segment $\alpha^1_\eps := \alpha_1 \cap \widetilde{M}_\eps$. $\mathcal{F}_2(x',y')$ denotes its analogue for the second metric, that is the $g_2$-geodesics having a positive transverse intersection with $\alpha^2_\eps := \alpha_2 \cap \widetilde{M}_\eps$.

Since $\mathrm{h}$ preserves the Liouville measure (that is $\mathrm{h}_*\eta_1=\eta_2$), we have:
\[ \eta_1(\mathcal{F}_1(x,y)) = \eta_2(\mathrm{h}(\mathcal{F}_1(x,y))) \]
We could hope that $\mathrm{h}(\mathcal{F}_1(x,y))=\mathcal{F}_2(x',y')$ but this is not the case (see Figure (\ref{fig:defect})), insofar as there is a slight defect due to the fact that we are not looking at points on the boundary, and this is where the arguments of Otal fail to apply immediately. However, we have:

\begin{lemm}
\label{lem:est5}
\[\eta_1(\mathcal{F}_1(x,y)) = \eta_2(\mathcal{F}_2(x',y')) + \mathcal{O}(\eps^\infty),\]
where the remainder is independent of $(x,\xi)$.
\end{lemm}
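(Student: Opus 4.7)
The plan is to bypass the geometric ``defect'' between $\mathrm{h}(\mathcal{F}_1(x,y))$ and $\mathcal{F}_2(x',y')$ depicted in Figure \ref{fig:defect} by expressing each Liouville current measure directly as twice a Riemannian distance via formula (\ref{eq:d}). The key observation is that formula (\ref{eq:d}) applies verbatim to any pair of interior points of $\widetilde{M}$ (not just boundary points), and by construction $x,y \in \widetilde{M}_\eps$ (respectively $x', y' \in \widetilde{M}_\eps$) are precisely the entry and exit points of the $g_1$-geodesic $\alpha_1$ (respectively the $g_2$-geodesic $\alpha_2$) in $\widetilde{M}_\eps$. In particular, $\mathcal{F}_1(x,y)$ fits exactly the hypothesis of formula (\ref{eq:d}) for the metric $g_1$: it is the set of $g_1$-geodesics transversally crossing the $g_1$-geodesic segment $[x,y]$. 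The same holds for $\mathcal{F}_2(x',y')$ with the metric $g_2$.

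Applying formula (\ref{eq:d}) to each side yields
\[
\eta_1(\mathcal{F}_1(x,y)) = 2\, d_1(x,y) = 2\, l^1_{\eps,+}(x,\xi),
\]
since the $g_1$-distance between $x$ and $y$ equals the length of the geodesic segment joining them inside $\widetilde{M}_\eps$. By the definition of $\wt{\psi}_\eps$ recalled in (\ref{eq:p1p2}), the point $\wt{\psi}_\eps(x,\xi)$ generates the $g_2$-geodesic $\alpha_2(p,q)$, whose entry and exit points in $\widetilde{M}_\eps$ are $x'$ and $y'$; consequently,
\[
\eta_2(\mathcal{F}_2(x',y')) = 2\, d_2(x',y') = 2\, l^2_{\eps,+}(\wt{\psi}_\eps(x,\xi)).
\]
The claim then reduces to the scalar estimate
\[
\bigl|\, l^1_{\eps,+}(x,\xi) - l^2_{\eps,+}(\wt{\psi}_\eps(x,\xi))\,\bigr| = \mathcal{O}(\eps^\infty)
\]
uniformly in $(x,\xi) \in \partial_-S^*\widetilde{M}^1_\eps \setminus \Gamma_-^1$, which is exactly the content of Lemma \ref{lem:est3}.

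The only mild subtlety is checking that formula (\ref{eq:d}) does apply in this slightly extended setting: $\mathcal{F}_1(x,y)$ was initially introduced for $(x,x') \in \partial \widetilde{M}\times\partial\widetilde{M}$, but its definition as ``geodesics having positive transverse intersection with the segment $[x,y]$'' makes sense for any pair of interior points, and the coordinate computation preceding (\ref{eq:d}) goes through unchanged. The set of trapped geodesics does not interfere thanks to Remark \ref{rem:rem}. Thus the main work has already been done in Lemmas \ref{lem:est2} and \ref{lem:est3}, and the present statement is an immediate consequence, without ever needing to control the geometric symmetric difference $\mathrm{h}(\mathcal{F}_1(x,y)) \,\triangle\, \mathcal{F}_2(x',y')$ directly.
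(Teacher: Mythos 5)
Your proposal is correct and is essentially identical to the paper's own (very terse) proof, which simply invokes formula (\ref{eq:d}) to convert each Liouville-current measure into twice the corresponding geodesic length inside $\widetilde{M}_\eps$ and then applies Lemma \ref{lem:est3}. You have merely spelled out the details that the paper leaves implicit.
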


\begin{proof}
It follows from Lemma \ref{lem:est3}, combined with equation (\ref{eq:d}).
\end{proof}

We can now establish the lemma on convexity. We will be careful to use the notation $\wt{\cdot}$ to refer to the objects on the universal cover.

\begin{proof}[Proof]
$d\mu_1/\text{vol}_{g_1}(S^*M^1_\eps)$ is a probability measure on $S^*M^1_\eps$ and by Jensen inequality, we have, for all $\theta \in [0,\pi]$:
\[ J(\Theta_\eps(\theta)) \leq \dfrac{1}{\text{vol}_{g_1}(S^*M^1_\eps)} \int_{S^*M^1_\eps} J(f(x,\xi,\theta)) d\mu_1(x,\xi) \]
Multiplying by $\sin(\theta)$, integrating over $[0,\pi]$ and applying Fubini's Theorem, we obtain:
\[ \int_0^\pi J(\Theta_\eps(\theta)) \sin(\theta) d\theta \leq \dfrac{1}{\text{vol}_{g_1}(S^*M^1_\eps)} \int_{S^*M^1_\eps} \int_0^\pi J(f(x,\xi,\theta)) \sin(\theta) d\theta d\mu_1(x,\xi)   \]
Using Santalò's formula, we obtain for the last integral:
\[ \begin{split}
\int_{S^*M^1_\eps} \int_0^\pi & J(f(x,\xi,\theta)) \sin(\theta) d\theta d\mu_1(x,\xi)  \\ &=  \int_{\partial_- S^*M^1_\eps} \int_{0}^{l_{\eps,+}^1(x,\xi)} \int_0^\pi J(f(\varphi_\tau^1(x,\xi),\theta)) \sin(\theta) d\theta d\tau d\mu_{1,\nu}(x,\xi) \end{split}, \]
where $d\mu_{1,\nu}(x,\xi) = |g_1(\xi,\nu_1)|i^*_{\partial S^*M^1_\eps}(d\mu_1)$, $\nu_1$ is the normal unit outward covector to the boundary, $i^*_{\partial S^*M^1_\eps}(d\mu_1)$ is the restriction of the Liouville measure to the boundary (the measure induced by the Sasaki metric restricted to $\partial S^*M^1_\eps$), and $l^1_{\eps,+}(x,\xi)$ is the length of the geodesic starting from $(x,\xi)$ in $M_\eps$. Note that we would formally need to remove the set of trapped geodesics when applying Santalò's formula. However, as mentionned in the Remark \ref{rem:rem}, they have zero measure and do not influence the computation, so we forget them in order not to complicate the notations. By parametrizing each fiber $\partial_-S^*_xM^1_\eps$ with an angle $\omega \in [0,\pi]$, we can still disintegrate the measure $d\mu_{1,\nu}=\sin(\omega)d\omega dx$, where $dx$ is the measure induced by the metric $g_1$ on $\partial M_\eps$ and $d\omega$ is the measure in the fiber $\partial_-S^*M^1_\eps$, so that:
\[ \begin{split}
& \int_{S^*M^1_\eps}  \int_0^\pi J(f(x,\xi,\theta)) \sin(\theta) d\theta d\mu_1(x,\xi)  \\ &=  \int_{\partial M_\eps} \int_0^\pi \int_{0}^{l_{\eps,+}^1(x,\xi)} \int_0^\pi J(f(\varphi_\tau^1(x,\xi),\theta)) \sin(\theta) d\theta d\tau \sin(\omega) d\omega dx \\
& = \int_{\partial M_\eps} \int_{\omega_\eps}^{\pi-\omega_\eps} \int_{0}^{l_{\eps,+}^1(x,\xi)} \int_0^\pi J(f(\varphi_\tau^1(x,\xi),\theta)) \sin(\theta) d\theta d\tau \sin(\omega) d\omega dx  + ||J||_{L^\infty}\mathcal{O}(\eps^N), \end{split} \]
Recall that we applied Lemma \ref{lem:est1} with $\omega_\eps = \mathcal{O}(\eps^{N+1})$. The loss of $1$ in the exponent is due to the fact that we have to swallow uniformly the lengths $l^{1}_{\eps,+}(x,\xi) = \mathcal{O}(-\ln \eps)$ in the integral.

Let us fix $(x,\xi(\omega)) \in \partial_-S^*M^1_\eps \setminus \Gamma_-$ and consider one of its lift on the universal cover $(\wt{x},\wt{\xi}(\omega)) \in \partial_-S^*\wt{M}^1_\eps \setminus \wt{\Gamma}^1_-$. It generates a geodesic with endpoints $(p,q) \in \partial \wt{M} \times \partial \wt{M}$. We can rewrite the integral
\[ \int_{0}^{l_{\eps,+}^1(x,\xi)} \int_0^\pi J(f(\varphi_\tau^1(x,\xi),\theta)) \sin(\theta) d\theta d\tau  =  \int_{0}^{\wt{l}_{\eps,+}^1(\wt{x},\wt{\xi})} \int_0^\pi J(\wt{f}(\wt{\varphi}_\tau^1(\wt{x},\wt{\xi}),\theta)) \sin(\theta) d\theta d\tau, \]
We will now use the diffeomorphisms $\phi_i : V_i \rightarrow \mathcal{F}(p,q)$ (for $i = 1,2$) introduced in Section \ref{sect:liouv} (see equation (\ref{eq:v})). The $\wt{g}_1$-geodesic joining $p$ to $q$ is denoted by $\alpha_1(p,q)$: we choose a parametrization $\gamma : \R \rightarrow \alpha_1(p,q)$ by arc-length using the middle point (see Section \ref{sect:liouv}). Remark that the composition $\phi_2^{-1} \circ \phi_1 : V_1\rightarrow V_2$ has the form $(\tau,\theta) \mapsto (~\cdot~, \wt{f}(\gamma(\tau),\dot{\gamma}(\tau),\theta))$ (the first coordinate is of no interest to us). Moreover,
\[ \left(\phi_2^{-1} \circ \phi_1\right)^*\sin(\theta)d\theta d\tau = \phi_1^* \eta_2 = \phi_1^* \eta_1 = \sin(\theta)d\theta d\tau, \]
since the two Liouville currents agree according to Lemma \ref{lem:agree}. We have:
\[ \begin{split} \int_{0}^{\wt{l}_{\eps,+}^1(\wt{x},\wt{\xi})} \int_0^\pi  J(\wt{f}(\wt{\varphi}_\tau^1(\wt{x},\wt{\xi}),\theta)) \sin(\theta) d\theta d\tau  &  = \phi_1^* \eta_1 (J \circ \phi_2^{-1} \circ \phi_1 \cdot \mathbf{1}_{[T,T+\wt{l}_{\eps,+}^1(\wt{x},\wt{\xi})]\times [0,\pi]}) \\
& = \eta_1(J \circ \phi_2^{-1} \cdot \mathbf{1}_{\mathcal{F}_1(\wt{x},\wt{y})}) \\
& = \eta_2(J \circ \phi_2^{-1} \cdot \mathbf{1}_{\mathrm{h}(\mathcal{F}_1(\wt{x},\wt{y}))}) \\
& = \eta_2(J \circ \phi_2^{-1} \cdot \mathbf{1}_{\mathcal{F}_2(\wt{x}',\wt{y}')}) + ||J||_{L^\infty} \mathcal{O}(\eps^\infty) \\
& = \int_0^{\wt{l}_{\eps,+}^2(\wt{x}',\wt{\xi}')} \int_0^\pi J(\theta) \sin(\theta) d\theta d\tau + ||J||_{L^\infty} \mathcal{O}(\eps^\infty)  \\
& = \wt{l}_{\eps,+}^2(\wt{x}',\wt{\xi}') \int_0^\pi J(\theta) \sin(\theta) d\theta + ||J||_{L^\infty} \mathcal{O}(\eps^\infty), \end{split} \]
where the fourth equality follows from Lemma \ref{lem:est5}. The constant $T$ on the second line is unknown and appears in the choice of parametrization of the geodesic segment $\alpha_1(\wt{x},\wt{y})$ but does not influence the computation. The point $(\wt{x}',\wt{\xi}')=\wt{\psi}_\eps(\wt{x},\wt{\xi})$ is the image of $(\wt{x},\wt{\xi})$ by the diffeomorphism $\wt{\psi}_\eps$ defined in Section \ref{ssect:psi}. We recall that this diffeomorphism is invariant by the fundamental group and descends on the base as $\psi_\eps$.

Inserting this into the previous integrals, we obtain:
\[ \begin{split} &\int_{S^*M^1_\eps}  \int_0^\pi  J(f(x,\xi,\theta)) \sin(\theta) d\theta d\mu_1(x,\xi) \\
& =\int_0^\pi J(\theta) \sin(\theta) d\theta  \int_{\partial M_\eps} \int_{\omega_\eps}^{\pi-\omega_\eps} l_{\eps,+}^2(\psi_\eps(x,\xi(\omega))) \sin(\omega) d\omega dx + ||J||_{L^\infty} \mathcal{O}(\eps^N) \end{split} \]

According to Lemma \ref{lem:est2}, we know that $\psi_\eps = \text{Id} + \mathcal{O}(\eps^\infty)$ in the $\mathcal{C}^1$ topology. In particular, the Jacobian of $\psi_\eps$ is $1+\mathcal{O}(\eps^\infty)$ and by a change of variable: 
\[  \begin{split} \int_{\partial M_\eps} \int_{\omega_\eps}^{\pi-\omega_\eps} l_{\eps,+}^2(\psi_\eps(x,\xi(\omega))) \sin(\omega) d\omega dx & =  \int_{\partial M_\eps} \int_{0}^{\pi} l_\eps^2(x',\xi') \sin(\omega') d\omega' dx' + \mathcal{O}(\eps^N)\\
& = \text{vol}_{g_2}(S^*M^2_\eps) + \mathcal{O}(\eps^N) \\
& =  \text{vol}_{g_1}(S^*M^1_\eps) + \mathcal{O}(\eps^N),\end{split} \]
where the two volumes agree to order $\mathcal{O}(\eps^N)$ according to the same computation with $J \equiv 1$. Inserting this into the previous integrals, we obtain the sought result.

\end{proof}

Remark that we can actually consider in Lemma \ref{lem:est4} a family of functions $J_\eps$, instead of a single function. We can assume that $||J_\eps||_{L^\infty}=\mathcal{O}(1/\eps^{\alpha})$, for some $\alpha > 0$ which we may take as large as we want. Then, we can always apply the lemma with $N' := N + \floor*{\alpha}+1$, so that in the end, the sup norm $||J_\eps||_{L^\infty}$ is swallowed in the term $\mathcal{O}(\eps^N)$. We actually obtain for free a better version:

\begin{lemm}
\label{lem:convbien}
Let $N \in \N^*$ be an integer and $\alpha > 0$. Let $J_\eps : [0, \pi] \rightarrow \R$ be a family of convex continuous function such that $||J_\eps||_{L^\infty} = \mathcal{O}(\eps^{-\alpha})$. Then:
\be \label{eq:conv} \int_0^\pi J_\eps(\Theta_\eps(\theta)) \sin(\theta) d\theta \leq \int_0^\pi J_\eps(\theta) \sin(\theta) d\theta + \mathcal{O}(\eps^N), \ee
where the remainder only depends on $\eps$.
\end{lemm}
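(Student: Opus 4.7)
My plan is to deduce this from Lemma \ref{lem:est4} by simply choosing the integer in the application of Lemma \ref{lem:est1} large enough to absorb the blowup of $\|J_\eps\|_{L^\infty}$.

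More precisely, I first observe that the integer $N$ appearing in the remainder of Lemma \ref{lem:est4} is not intrinsic to the situation: it is fixed at the outset (after Lemma \ref{lem:est1}) and can be chosen as large as desired, at the cost of taking the cone opening $\omega_\eps = \eps^{N+1}$ in Lemma \ref{lem:est1} correspondingly small. In other words, for every integer $N' \in \N^*$ the conclusion of Lemma \ref{lem:est4} reads
\[ \int_0^\pi J(\Theta_\eps(\theta)) \sin(\theta) d\theta \leq \int_0^\pi J(\theta) \sin(\theta) d\theta + \|J\|_{L^\infty}\, \mathcal{O}(\eps^{N'}), \]
with the implicit constant in $\mathcal{O}(\eps^{N'})$ depending only on $N'$ and the geometry, not on $J$.

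Given $N$ and $\alpha$ as in the statement, I would then set $N' := N + \lfloor \alpha \rfloor + 1$ and apply the previous inequality to each $J_\eps$ (which is convex continuous on $[0,\pi]$, so the hypothesis of Lemma \ref{lem:est4} is satisfied uniformly in $\eps$). This yields
\[ \int_0^\pi J_\eps(\Theta_\eps(\theta)) \sin(\theta) d\theta \leq \int_0^\pi J_\eps(\theta) \sin(\theta) d\theta + \|J_\eps\|_{L^\infty}\, \mathcal{O}(\eps^{N'}). \]
Since by hypothesis $\|J_\eps\|_{L^\infty} = \mathcal{O}(\eps^{-\alpha})$, the remainder is
\[ \|J_\eps\|_{L^\infty}\, \mathcal{O}(\eps^{N'}) = \mathcal{O}(\eps^{-\alpha})\, \mathcal{O}(\eps^{N+\lfloor \alpha \rfloor + 1}) = \mathcal{O}(\eps^{N+\lfloor \alpha \rfloor + 1 - \alpha}) = \mathcal{O}(\eps^N), \]
which is the claimed bound.

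The only subtlety worth checking is the uniformity in $\eps$: one must verify that the implicit constant produced in the proof of Lemma \ref{lem:est4} really only depends on $N'$ (and on the fixed geometric data $g_1,g_2$), not on the particular choice of test function. This is clear from inspection of that proof: the $\|J\|_{L^\infty}\mathcal{O}(\eps^{N'})$ term arises exclusively from (a) truncating the $\omega$-integral to $[\omega_\eps,\pi-\omega_\eps]$, with $\omega_\eps = \eps^{N'+1}$, after swallowing the logarithmic factor $l^1_{\eps,+} = \mathcal{O}(-\ln\eps)$, and (b) from the $\mathcal{O}(\eps^\infty)$ error supplied by Lemma \ref{lem:est5} and the Jacobian estimate for $\psi_\eps$ provided by Lemma \ref{lem:est2}. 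In both cases the dependence on the integrand factors out as a multiplicative $\|J\|_{L^\infty}$, so the estimate is genuinely uniform over convex continuous functions. I do not anticipate any real obstacle; the lemma is essentially a book-keeping refinement of Lemma \ref{lem:est4}, designed precisely to allow later application to functions $J_\eps$ whose sup norm degenerates polynomially in $\eps$.
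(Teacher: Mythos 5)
Your argument is exactly the paper's: the lemma is obtained by re-running Lemma \ref{lem:est4} with the integer of Lemma \ref{lem:est1} enlarged to $N' = N + \lfloor\alpha\rfloor + 1$, so that the factor $\|J_\eps\|_{L^\infty} = \mathcal{O}(\eps^{-\alpha})$ is absorbed into $\mathcal{O}(\eps^{N'})$, leaving $\mathcal{O}(\eps^N)$. The proposal is correct, and your extra check that the implicit constant in Lemma \ref{lem:est4} is uniform over the family $J_\eps$ is the right point to verify.
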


\section{Estimating the average angle of deviation}

\label{sect:trap}

As mentioned previously, we are unable to prove a priori that the $\Theta_\eps$ are uniformly Lipschitz. Nevertheless, we can show that they decompose as a sum $\Theta_\eps^{(a)}+\Theta_\eps^{(b)}$ where the $\Theta_\eps^{(a)}$ are Lipschitz (and their Lipschitz constant is controlled) and the $\Theta_\eps^{(b)}$ have a "small" $\mathcal{C}^0$ norm. This will be sufficient to apply our version of Otal's estimate (see Proposition \ref{prop:ot2}).

Note that we will sometimes drop the notation $C$ for the different constants which may appear at each line of our estimates and rather use the symbol $\lesssim$. By $||A|| \lesssim ||B||$, we mean that there exists a constant $C > 0$, which is independent of the elements $A$ and $B$ considered and such that, $||A|| \leq C ||B||$.

\subsection{Derivative of the angle of deviation}

The purpose of this paragraph is to estimate the derivative (with respect to $\theta$) of the angle of deviation $f$. We recall that
\[ W_1= \left\{(x,\xi,\theta) \in S^*M_1 \times (0,\pi), (x,\xi), (x,R_\theta \xi) \notin (\Gamma^1_- \cup \Gamma^1_+)\right\} \]


\begin{lemm}
\label{lem:df}
There exist constants $C, k > 0$ (independent of $\eps$) such that for all $(x,\xi,\theta) \in S^*M^{1}_\eps \cap W_1$:
\[ \left|\dfrac{\partial f}{\partial \theta}(x,\xi,\theta)\right| \leq C \exp{\left(k\left(l_{\eps, +}^{1}(x,R_\theta \xi) + |l_{\eps, -}^{1}(x,R_\theta \xi)| \right)\right)}   \]
\end{lemm}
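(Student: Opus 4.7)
The plan is to apply the chain rule and then bound each factor using the Anosov exponential growth together with the asymptotically hyperbolic structure near the boundary.

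Fix $(x,\xi)$ and view $f$ as a function of $\theta$. By construction, $f(x,\xi,\theta)$ is the angle at which the $g_2$-geodesic $\alpha_2^\theta$ with endpoints $(y_-(\theta),y_+(\theta))=(B_-^1(x,R_\theta\xi),B_+^1(x,R_\theta\xi))$ meets the fixed $g_2$-geodesic $\alpha_2$ with endpoints $(B_-^1(x,\xi),B_+^1(x,\xi))$. The intersection angle depends smoothly on $(y_-,y_+)$: this uses smoothness of the $g_2$-scattering map and of the $g_2$-renormalized distance $D_2$ on $\wt{\overline M}\times\wt{\overline M}\setminus\text{diag}$ (facts recalled before Proposition~\ref{prop:scat}); non-transversality of the two $g_2$-geodesics is ruled out by negative curvature. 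Writing $f=\Phi(y_-(\theta),y_+(\theta))$ for a smooth function $\Phi$ whose differential is locally bounded by a constant, the problem reduces to estimating $|dy_\pm/d\theta|$.

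Since $y_\pm(\theta)=\pi_0(B_\pm^1(x,R_\theta\xi))$ and $|\partial_\theta(R_\theta\xi)|$ is bounded, it suffices to bound $|dB_\pm^1(x,R_\theta\xi)|$ applied to a fiber direction. The plan is to split the scattering map into two stages. First, flow with $\varphi_t^1$ from $(x,R_\theta\xi)\in S^*M_\eps$ for $\varphi^1$-time $\pm l_{\eps,\pm}^1(x,R_\theta\xi)$ until reaching $\partial_\pm S^*M_\eps$; the Anosov exponential bound \eqref{eq:growth} (applied in positive or negative time) gives
\[ \bigl|d\varphi_{\pm l_{\eps,\pm}^1}^1\bigr| \leq C\exp\bigl(k\cdot|l_{\eps,\pm}^1(x,R_\theta\xi)|\bigr). \]
Second, flow with $\overline{\varphi}^1_\tau$ from $\partial_\pm S^*M_\eps$ to the ideal boundary $\partial_\pm S^*M$. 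The $\overline{\varphi}^1$-time needed to traverse the collar $\{0<\rho<\eps\}$ is $O(\eps)$, as established in the proof of Lemma~\ref{lem:est1}, and since $\overline X$ extends smoothly to $\overline{S^*M}$ by Lemma~\ref{lem:xbar}, the differential of this boundary map is $\mathrm{Id}+O(\eps)$, hence uniformly bounded in $\eps$. Composing the two stages and summing the contributions coming from $y_-$ and $y_+$ yields the desired bound.

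The main obstacle will be handling the transition between the two flows $\varphi^1$ and $\overline\varphi^1$ near the boundary. The $\varphi^1$-time to reach $\partial M$ is infinite, so one cannot directly apply \eqref{eq:growth} on the whole trajectory; instead I must show that the contribution of the collar $\{0<\rho<\eps\}$ to the derivative is absorbed into a uniform constant. This relies crucially on the smooth extension of $\overline X$ up to the boundary and on the precise fact, visible in the proof of Lemma~\ref{lem:est1}, that the $\overline\varphi^1$-time to cross this collar is of order $\eps$, so that the corresponding differential factor is $1+O(\eps)$ independently of the geometry of the trajectory inside $M_\eps$.
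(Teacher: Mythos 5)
There is a genuine gap in the second step, and it is precisely the point where the real work of this lemma lies. You write $f=\Phi(y_-(\theta),y_+(\theta))$ and assert that the differential of $\Phi$ (endpoints at infinity $\mapsto$ intersection angle) is ``locally bounded by a constant.'' Local boundedness is automatic for a smooth map but useless here: the estimate must be uniform over the non-compact family of all configurations in $S^*M^1_\eps\cap W_1$ and over all $\eps$, and no uniform constant bound holds. Already in the hyperbolic disk one checks that if the intersection point $\mathrm{x}$ sits at height $\rho\approx\eps$ and the endpoint $y'$ lies ``directly below'' it, a unit variation of $y'$ in the $h$-metric produces a variation of the angle at $\mathrm{x}$ of order $\eps^{-1}$. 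The correct statement, which the paper proves, is $\left|\partial f/\partial y'\right|\lesssim \eps^{-1}e^{-k\,l^2_{\eps,+}}$, and it is obtained by recognizing that the relevant variation field is an \emph{unstable} Jacobi field (the past endpoint $y$ is frozen) and invoking a \emph{lower} bound on its forward growth (Klingenberg's argument via Gronwall, adapted to the curvature bounds $-1\pm\tfrac{1}{10}e^{-t}$ in the collar). This is an input of a different nature from the upper bound \eqref{eq:growth} and cannot be replaced by ``smooth dependence on the endpoints.''

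Relatedly, your bound on $|dy'/d\theta|$ is missing a factor of $\eps$: composing \eqref{eq:growth} (stated in the $g_1$-Sasaki metric) with the collar-crossing map (whose differential is close to the identity only in the smooth structure of $\overline{S^*M}$) requires converting metrics at $\{\rho=\eps\}$, which produces $|J|_{\overline g_1}=\rho|J|_{g_1}$ and hence $|dy'/d\theta|_h\lesssim\eps\, e^{k\,l^1_{\eps,+}}$. The lemma then follows only because this $\eps$ cancels against the $\eps^{-1}$ in the bound on $\partial f/\partial y'$. Your proposal obtains the right-looking final exponential by two compensating omissions (dropping the $\eps$ in the first factor and the $\eps^{-1}$ in the second), but neither intermediate claim is justified as stated, and the second is false as a uniform bound. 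To repair the argument you would need to reproduce the paper's two-sided Jacobi field analysis: the Bessel-function comparison in the collar for the upper bound on $d\varphi_t$, and the unstable-cone lower bound for $\partial f/\partial y'$.
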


\begin{proof}
We can write the derivative of $f$ as:
\be \label{eq:df} \dfrac{\partial f}{\partial \theta} = \dfrac{\partial f}{\partial y'} \left(\dfrac{\partial y'}{\partial \theta}\right) + \dfrac{\partial f}{\partial y} \left(\dfrac{\partial y}{\partial \theta}\right), \ee
where $y$ and $y'$ are defined in Section \ref{ssect:kappa} and study the different terms separately.

The idea is to study the behaviour (and more precisely the growth) of Jacobi vector fields in a vicinity of the boundary. Given a geodesic which enters the set $\left\{ \rho \geq \eps \right\}$, we will use the bounds (\ref{eq:growth}) to estimate the Jacobi vector fields on the segment contained in $\left\{\rho \geq \eps\right\}$. Then, by convexity, the geodesic exits $\left\{ \rho \geq \eps\right\}$ with a coordinate $\overline{\xi}_0 \leq 0$. On the set $\mathcal{C} = \left\{\rho < \delta \right\} \cap \left\{\overline{\xi}_0\leq 0\right\}$ (for some $\delta > 0$ small enough), we can study the behaviour of the geodesics more explicitly. Namely, given any point $(x,\xi) \in S^*M$ in $\mathcal{C}$, we know that it converges uniformly exponentially fast to the boundary in the sense that there exists $C > 0$ (uniform in $(x,\xi)$) such that if $\rho(t):=\rho(\varphi_t(x,\xi))$, then one has $\rho(0)e^{-t} \leq \rho(t) \leq C \rho(0)e^{-t}$ for $t\geq 0$ (see \cite[Lemma 2.3]{ggsu}). From the expression of the metric (\ref{eq:g}) in local coordinates, one can check that the curvature is given by $\kappa = -1 + \rho \cdot \mathcal{O}(1)$. As a consequence, if $\kappa(t) = \kappa(\pi_0(\varphi_t(x,\xi)))$ and $\delta > 0$ is chosen small enough at the beginning, one has that $-1-\frac{1}{10} e^{-t} \leq \kappa(t) \leq -1+\frac{1}{10} e^{-t}$, for any such $(x,\xi)$. If $t \mapsto \gamma(t)$ denotes the geodesic generated by this point and $J$ is a normal Jacobi vector field along $\gamma$, we write $J(t) = j(t) R_{\pi/2} \dot{\gamma}(t)$, where $j$ satisfies the Jacobi equation $\ddot{j}(t) + \kappa(t)j(t) = 0$. Assume $j(0) = 0,\dot{j}(0)=1$, then $j(t) > 0$ (there are no conjugate points) and thus $\ddot{j}(t) \leq (1+\frac{1}{10} e^{-t})j(t)$. By a comparison argument, $j(t) \leq z(t)$ where $z$ is the solution to $\ddot{z}(t)-(1+\frac{1}{10}e^{-t})z(t)=0$ with $z(0)=j(0), \dot{z}(0) = \dot{j}(0)$.

But making the change of variable $u = 2\sqrt{10}e^{-t/2}$, $\wt{z}(u) = z(t)$, one can prove that $\wt{z}$ solves the modified Bessel equation of parameter $2$ that is
\[  u^2 \dfrac{d^2 \wt{z}}{du^2} + u \dfrac{d\wt{z}}{du} - (u^2+2^2) \wt{z} = 0 \]
and thus $\wt{z}(u) = A \cdot I_2(u) + B \cdot K_2(u)$ for some parameters $A, B \in \R$ depending on $\wt{z}(0)$, $\dot{\wt{z}}(0)$, $I_2$ and $K_2$ being the modified Bessel functions of first and second kind. Thus: $z(t) = A \cdot I_2(2\sqrt{10}e^{-t/2}) + B \cdot K_2(2\sqrt{10}e^{-t/2})$ where $I_2(2\sqrt{10}e^{-t/2}) \sim_{t \rightarrow +\infty} C e^{-t}, K_2(2\sqrt{10}e^{-t/2}) \sim_{t \rightarrow + \infty} C e^{t}$ (see \cite[9.6.7-9.6.9]{as}) For instance, if $j(0)=0, \dot{j}(0) = 1$, which corresponds to a vertical variation of geodesics, then we obtain $|d\pi \circ d\varphi_t (V)| = |J(t)| \leq Ce^{t}$ for some constant $C > 0$ independent of the point. Using this technique of comparison and decomposing any vector by its vertical and horizontal components, one obtains that $||d\varphi_t(x,\xi)|| \leq Ce^{t}$ for $(x,\xi) \in \mathcal{C}$, where the constant $C > 0$ is uniform in $(x,\xi)$.

We fix $(x_0,\xi_0, \theta_0)$ and look at the variation $\theta \mapsto (x_0,R_{\theta_0+\theta} \xi_0)$. For each $\theta$, we thus have a $g_1$-geodesic $t \mapsto \gamma_\theta(t)$ generated by this point and it hits the boundary in the future at $y'(\theta)$. We set $\gamma := \gamma_0$. We denote by $J(t) := \partial_\theta \gamma_\theta(t)$ the Jacobi vector field along $\gamma$. Writing in short $l^{1}_{+,\eps} = l^{1}_{+,\eps}(x_0,R_{\theta_0}\xi_0), V=V(x_0,R_{\theta_0}\xi_0)$, we have for $t=s + l_\eps, s\geq0$:
\[  |J(t)|_{g_1} = \left|d\pi \circ d\varphi_{s+l^{1}_{+,\eps}}(V)\right|  \leq C e^{s} |d \pi \circ d\varphi_{l_\eps}(V)| \leq C e^{s} e^{k l^{1}_{+,\eps}}  \]
The first inequality follows from our previous remarks whereas the second one is a consequence of (\ref{eq:growth}). Now, we know that $\rho(l^1_{+,\eps})e^{-s} = \eps e^{-s} \leq \rho(t) \leq C \eps e^{-s} = C \rho(l^1_{+,\eps})e^{-s}$. As a consequence, for $t$ large enough, we have: $|J(t)|_{\overline{g}_1} = \rho(t) |J(t)|_{g_1} \leq C \cdot \eps e^{k l^{1}_{+,\eps}}$. By making $t \rightarrow +\infty$, we obtain that $\left| \dfrac{\partial y'}{\partial \theta} \right|_{h} \leq C\cdot \eps e^{k l^{1}_{+,\eps}}$.

Conversely, we consider a family of points $y'(u)$ in a vicinity of $y'_0$ on the boundary (such that $\left|\dfrac{\partial y'}{\partial u}\right|_{h} = 1$) and we look at the $g_2$-geodesics joining $y$ to $y'(u)$. They intersect the $g_2$-geodesic joining $z$ to $z'$ (the endpoints of the geodesic genreated by $(x,\xi)$) at some point $\mathrm{x}(u)$, and we obtain $(\mathrm{x}(u),\Xi(u))$ and an angle $f(u)$. From another perspective, we have a family of points $(\mathrm{x}(u),R_{f(u)}\Xi(u))$ which generate geodesics joining $y'(u)$ (in the future) to $y$ (in the past). Like before, we denote by $\gamma$ the geodesic obtained for $u=0$ and by $J$ the Jacobi vector field along $\gamma$. Since the point $y$ joined in the past by the geodesic is fixed (it does not depend on $u$), $J$ (more precisely, its lift in $TS^*M$) lies in the unstable bundle. We write
\[ \partial_u (\mathrm{x}(u),R_{f(u)}\Xi(u)) = d\pi^{-1}(J(0))+\mathcal{K}^{-1}(\nabla_t J(0))=\lambda \cdot \xi_u,\]
where $\xi_u$ is one of the two unit vectors (with respect to the $g_2$-Sasaki metric) generating $E_u$.
Note that the vertical component of this vector is precisely $\dfrac{\partial f}{\partial u} V$ and thus $|\lambda| \geq \left|\dfrac{\partial f}{\partial u}\right|$. We write $l^2_{+,\eps} = l^2_{+,\eps}(\mathrm{x},R_f \Xi)$. For $t=s+l^2_{+,\eps}, s\geq0$: 
\[ \begin{split} |J(t)|_{g_2} = |d\pi \circ d\varphi_t(\lambda \xi_u)| & = |\lambda| \cdot |d\pi \circ d\varphi_s\left(d\varphi_{l^2_{+,\eps}}(\xi_u)\right)| \\ 
& \geq |\lambda| \cdot e^{s} |d\varphi_{l^2_{+,\eps}}(\xi_u)| \\
& \geq C |\lambda| e^{s} e^{k l^2_{+,\eps}}\geq C \left|\dfrac{\partial f}{\partial u}\right| e^s e^{k l^2_{+,\eps}}\end{split}  \]
The term in $e^{k l^2_{+,\eps}}$ follows from (\ref{eq:growth}) whereas the term $e^s$ is a consequence on the bounds of the curvature. More precisely, for fixed bounds, that is $-k_0^2\leq\kappa\leq-k_1^2$, such a lower bound is obtained in \cite[Theorem 3.2.17]{wk2}, and the same proof applies here, except that we have bounds $-1-\frac{1}{10} e^{-t} \leq \kappa(t) \leq -1+\frac{1}{10} e^{-t}$. But the argument of Klingenberg is based on Gronwall lemma and $t \mapsto e^{-t}$ is integrable, so we get the same result in the end. Multiplying by $\rho(t)$ and taking the limit as $t \rightarrow +\infty$, we eventually obtain that $\left| \dfrac{\partial y'}{\partial u} \right|_{h} = 1 \geq C \eps e^{k l^2_{+,\eps}} \left|\dfrac{\partial f}{\partial u}\right|$.

Putting the previous bounds together, and using (\ref{eq:df}), we obtain the sought result.

\end{proof}

\subsection{Derivative of the exit time}

We set $T_\eps = -\wt{N} \ln \eps$ for some integer $\wt{N}$, like in the proof of Lemma \ref{lem:escape}.

\begin{lemm}
\label{lem:dl}
There exist constants $C, k > 0$ (independent of $\eps$) such that for all $(x,\xi,\theta) \in S^*M^1_\eps \cap W_1$ such that 
\[ T_\eps \leq l^{1}_{\eps,+}(x,R_\theta \xi) + |l^{1}_{\eps,-}(x,R_\theta \xi)|, \]
one has:
\[ \partial_\theta \left(l^{1}_{\eps,+}(x,R_\theta \xi) + |l^{1}_{\eps,-}(x,R_\theta \xi)|\right) \leq C \exp{\left(k\left(l^{1}_{\eps,+}(x,R_\theta \xi) + |l^{1}_{\eps,-}(x,R_\theta \xi)|\right)\right)} \]
\end{lemm}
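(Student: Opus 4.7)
The plan is to apply the implicit function theorem to the defining equations $\rho(\varphi^1_{l^1_{\eps,\pm}}(x,R_\theta\xi)) = \eps$. Writing $V_\theta = (0,R_{\pi/2}R_\theta\xi)$ for the vertical variation vector, implicit differentiation produces
\[
\partial_\theta l^1_{\eps,\pm} \;=\; -\frac{d\rho\bigl(d\varphi^1_{l^1_{\eps,\pm}}\cdot V_\theta\bigr)}{d\rho(X_1)\bigr|_{\varphi^1_{l^1_{\eps,\pm}}(x,R_\theta\xi)}}.
\]
Since $|d\rho|_{g_1} = \rho = \eps$ at the exit/entry point, the exponential growth bound (\ref{eq:growth}) majorates the numerator by $C\eps\,e^{k l^1_{\eps,\pm}}$. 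In b-coordinates $d\rho(X_1) = \rho\,\overline{\xi}_0$, so the denominator equals $\eps\sin\omega_\pm$, where $\omega_\pm\in(0,\pi)$ is the angle the geodesic makes with $\partial M_\eps$ at exit/entry. The whole proof thus reduces to showing a uniform lower bound $\sin\omega_\pm \gtrsim 1$ under the standing hypothesis $l^1_{\eps,+} + |l^1_{\eps,-}| \geq T_\eps = -\widetilde{N}\ln\eps$.

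To establish this lower bound I exploit the collar dynamics of $\overline{\xi}_0$. Using Hamilton's equations for $H = \tfrac12|\xi|^2_g$ in b-coordinates, together with the asymptotic form (\ref{eq:g}) of the metric and the energy identity $|\xi|_g = 1$, a direct computation gives along the geodesic flow
\[
\dot{\overline{\xi}}_0 \;=\; \overline{\xi}_0^2 - 1 + \mathcal{E}, \qquad |\mathcal{E}| \lesssim \rho\,(1-\overline{\xi}_0^2).
\]
The unperturbed equation is exactly that of the hyperbolic plane, with solutions $\overline{\xi}_0(t) = \tanh(c-t)$ and $\rho(t) = R/\cosh(c-t)$; the quantity $R := \rho/\sqrt{1-\overline{\xi}_0^2}$ (the would-be apex) is conserved there. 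Using the sharp form of $\mathcal{E}$, one computes $|\dot R/R| \lesssim R|\overline{\xi}_0|\sqrt{1-\overline{\xi}_0^2}$; integrating along one collar traversal after substituting the unperturbed hyperbolic profile yields $|\Delta R|/R \lesssim \eps_0$, so that $R$ is conserved up to a relative $\mathcal{O}(\eps_0)$ error on each collar segment. Two consequences follow for $\eps_0$ small: the time spent in $\{\eps\leq\rho\leq\eps_0\}$ along one collar segment is at most $2\operatorname{arccosh}(\eps_0/\eps) + \mathcal{O}(1) \leq 2|\ln\eps| + \mathcal{O}(1)$, and at each crossing of $\{\rho=\eps\}$ one has $\sin\omega = \sqrt{1-(\eps/R)^2}\,(1+\mathcal{O}(\eps_0))$.

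The proof then concludes by dichotomy. Taking $\widetilde{N}\geq 3$ (which is harmless, since enlarging $\widetilde{N}$ only strengthens Lemma \ref{lem:escape}), if the trajectory $\varphi^1_t(x,R_\theta\xi)$ stayed entirely in $\{\rho\leq\eps_0\}$ on $[l^1_{\eps,-}, l^1_{\eps,+}]$, the collar time bound would force $l^1_{\eps,+}+|l^1_{\eps,-}| \leq 2|\ln\eps| + \mathcal{O}(1)$, contradicting the hypothesis for $\eps$ small. Hence the geodesic enters $\{\rho\geq\eps_0\}$; by strict convexity of every level set $\{\rho\geq c\}$ (Lemma \ref{lem:conv}) its portion inside $M_\eps$ then decomposes as in-coming collar, deep part, out-going collar, and each of the two collar segments has extrapolated apex $R\geq\eps_0$. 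The angle formula above then gives $\sin\omega_\pm \geq \sqrt{1-(\eps/\eps_0)^2}\,(1-\mathcal{O}(\eps_0)) \geq 1/2$ for $\eps$ small, and substituting into the implicit-function formula yields $|\partial_\theta l^1_{\eps,\pm}| \leq 2C\,e^{k l^1_{\eps,\pm}}$. Summing over $\pm$ and slightly enlarging $C,k$ produces the stated inequality.

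The main obstacle is the rigorous implementation of the approximate conservation of $R$. The coarse estimate $|\mathcal{E}|=\mathcal{O}(\rho)$ is insufficient because the collar traversal has hyperbolic length $\mathcal{O}(|\ln\eps|)$ and the corresponding Grönwall-type integral blows up polynomially in $\eps$. The sharper form $|\mathcal{E}|=\mathcal{O}(\rho(1-\overline{\xi}_0^2))$ must be extracted from the precise structure of the dual metric $g^{-1}=\rho^2(\partial_\rho^2 + h_\rho^{-1})$ combined with the energy identity, and the integration of $|\dot R/R|$ against the hyperbolic invariant measure then yields the crucial $\mathcal{O}(\eps_0)$ relative bound that closes the dichotomy uniformly in $\eps$.
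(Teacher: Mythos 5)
Your proof is correct and follows the same route as the paper: differentiate the implicit equation $\rho(\varphi^1_{l^1_{\eps,\pm}}(x,R_\theta\xi))=\eps$, bound the numerator by $\eps\, e^{kl}$ via the growth estimate (\ref{eq:growth}), and bound the denominator below by showing the boundary angle satisfies $\sin\omega_\pm\gtrsim 1$ under the hypothesis $l^1_{\eps,+}+|l^1_{\eps,-}|\geq T_\eps$. The only difference is that the paper disposes of the angle bound in one sentence ("a small exit angle forces a bounded sojourn time, contradicting $T_\eps\leq l^1_{\eps,+}+|l^1_{\eps,-}|$, by Hamilton's equations"), whereas you prove the same fact in detail via the approximately conserved apex quantity $R=\rho/\sqrt{1-\overline{\xi}_0^2}$ and a collar dichotomy — a welcome filling-in of the step the paper leaves to the reader.
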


\begin{proof}
Let us deal with the case of the exit time in the future, the other case being similar. The exit time is defined by the implicit equation:
\[ \rho\left(\varphi^{1}_{l^{1}_{\eps,+}(x,R_\theta \xi)}(x,R_\theta \xi)\right) = \eps \]
Differentiating with respect to $\theta$, we obtain:
\[ \partial_\theta \left(l^{1}_{\eps,+}(x,R_\theta \xi)\right) d\rho\left(X_{1}(\varphi^{1}_{l^{1}_{\eps,+}(x,R_\theta \xi)}(x,R_\theta \xi))\right) + d\rho \left(d\left(\varphi^{1}_{l^{1}_{\eps,+}(x,R_\theta \xi)}\right)_{(x,R_\theta \xi)}V(x,R_\theta \xi) \right) = 0, \]
where $V(x,\xi) \in \mathcal{V}$ is the vertical vector in $(x,\xi)$ (it is unitary with respect to the Sasaki metric $G_1$). But:
\[ \left|d\rho\left(X_{1}(\varphi^1_{l^{1}_{\eps,+}(x,R_\theta \xi)}(x,R_\theta \xi))\right)\right| = \eps |d\rho(\overline{X}_1)|, \]
and $d\rho(\overline{X}_1)$ is the sine of the angle with which the geodesic exits the region $\left\{\rho \geq \eps\right\}$. If this angle is less than $\frac{1}{10}$ (any small constant works as long as the geodesics concerned stay in a region where the metric still has the usual expression (\ref{eq:g})), then the geodesic will spend at most a bounded (independently of $\eps$) amount of time in the region $\left\{\rho \geq \eps\right\}$, thus contradicting the condition:
\[ T_\eps = -\wt{N} \ln(\eps) \leq l^{1}_{\eps,+}(x,R_\theta \xi) + |l^{1}_{\eps,-}(x,R_\theta \xi)| \] This can be proved using the Hamilton's equations, similarly to the proof of Lemma \ref{lem:est1} for instance. Thus $|d\rho(\overline{X}_1)| \geq \frac{1}{10}$.

As to the second term, using the fact that $d\rho/\rho$ is unitary (with respect to the dual metric of $g_1$ on the cotangent space), we obtain that:
\[ \begin{split} \left| \rho \dfrac{d\rho}{\rho} \left(d\left(\varphi^{1}_{l^{1}_{\eps,+}(x,R_\theta \xi)}\right)_{(x,R_\theta \xi)}V(x,R_\theta \xi) )\right) \right|
& \leq \eps \left| d\left(\varphi^{1}_{l^{1}_{\eps,+}(x,R_\theta \xi)}\right)_{(x,R_\theta \xi)}V(x,R_\theta \xi) \right|_{G_1} \\
& \leq \eps e^{k l^{1}_{\eps,+}(x,R_\theta \xi)}, \end{split}\]
for some constant $k$, following (\ref{eq:growth}). This provides the sought result.
\end{proof}

\subsection{An inequality on the average angle of deviation}

We know that $f$ is almost everywhere continuous and bounded, so $\Theta_\eps$ is continuous by Lebesgue theorem. We now prove that the homeomorphism $\Theta_\eps$ satisfies the following estimate:

\begin{lemm}
\label{lem:esttheta}
For any $\delta \in (Q,0)$ (defined in Lemma \ref{lem:escape}), for all $\beta > 0$ small enough, there exists $\beta' > 0$ (depending on $\beta$ and converging towards $0$ as $\beta \rightarrow 0$) such that:
\[ \forall \theta_1,\theta_2 \in [0,\pi], ~~ |\Theta_\eps(\theta_1)-\Theta_\eps(\theta_2)| \lesssim \eps^{-\beta'}|\theta_1-\theta_2|^{\beta} + \eps^{\delta} \]
\end{lemm}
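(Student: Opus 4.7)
The approach is to average the difference $f(x,\xi,\theta_1)-f(x,\xi,\theta_2)$ over $S^*M_\eps^1$, split into a ``good'' set where the sojourn times are controlled by some threshold $T$ and a ``bad'' set whose measure is small by the non-escaping mass bound, and interpolate the resulting Lipschitz-in-$\theta$ estimate against the trivial bound $|f|\le\pi$ to obtain the H\"older exponent $\beta$. Set
\[
U(x,\xi):=l^1_{\eps,+}(x,\xi)+|l^1_{\eps,-}(x,\xi)|,\qquad T:=-s\log\eps\ \ (s>0\ \text{to be chosen}),
\]
and define $B_T:=\{(x,\xi)\in S^*M_\eps^1:\ \sup_{\theta\in[\theta_1,\theta_2]}U(x,R_\theta\xi)>T\}$, $G_T:=S^*M_\eps^1\setminus B_T$. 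On $G_T$, Lemma~\ref{lem:df} yields
\[
|f(x,\xi,\theta_1)-f(x,\xi,\theta_2)|\le\int_{\theta_1}^{\theta_2}Ce^{kU(x,R_\theta\xi)}\,d\theta\le Ce^{kT}|\theta_1-\theta_2|,
\]
and interpolation with $|f|\le\pi$ (geometric mean) gives the bound $\lesssim e^{k\beta T}|\theta_1-\theta_2|^\beta$ for any $\beta\in(0,1)$. On $B_T$ we use the trivial $\pi$-bound.

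The key point is to bound $\mu_1(B_T)$ using Lemma~\ref{lem:dl}. The inequality $|\partial_\theta U|\le Ce^{kU}$ (valid once $U\ge T_\eps$) integrates by separation of variables to $e^{-kU(x,R_\theta\xi)}\ge e^{-kU(x,\xi)}-Ck|\theta|$, so that if $|\theta_1-\theta_2|\le\eta:=(2Ck)^{-1}e^{-kT}$, any $(x,\xi)\in B_T$ must already satisfy $U(x,R_{\theta_1}\xi)\ge T-k^{-1}\log 2$. Rotation invariance of $\mu_1$ in the cosphere fibers together with time reversibility then reduces the estimate to Lemma~\ref{lem:escape}, giving $\mu_1(B_T)\lesssim\eps^{-(1+4\delta)}e^{-\delta T/2}$. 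Normalizing by $\text{vol}_{g_1}(S^*M_\eps^1)\sim\eps^{-1}$ and adding the two contributions produces
\[
|\Theta_\eps(\theta_1)-\Theta_\eps(\theta_2)|\lesssim e^{k\beta T}|\theta_1-\theta_2|^\beta+\eps^{-4\delta}e^{-\delta T/2}.
\]
Choosing $s=10$ makes the second term $\lesssim\eps^{\delta}$ while the first becomes $\eps^{-10k\beta}|\theta_1-\theta_2|^\beta$; hence $\beta':=10k\beta$ works and indeed $\beta'\to 0$ as $\beta\to 0$. The complementary range $|\theta_1-\theta_2|>\eta$ is disposed of by $|\Theta_\eps|\le\pi$, since there $\eps^{-\beta'}|\theta_1-\theta_2|^\beta\gtrsim 1$ (with implicit constant allowed to depend on $\beta$).

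The main obstacle is the estimate on $\mu_1(B_T)$: because Lemma~\ref{lem:df} controls $|\partial_\theta f|$ only pointwise in $\theta$ and through the $\theta$-dependent quantity $U(x,R_\theta\xi)$, one cannot directly identify $B_T$ with the simpler set $\{U(x,\xi)>T\}$ (whose measure is handled by rotation invariance plus Lemma~\ref{lem:escape}). The differential inequality on $\partial_\theta U$ in Lemma~\ref{lem:dl} is precisely what powers the Gronwall bootstrap above, converting ``the sojourn $U(x,R_\theta\xi)$ is large for some $\theta\in[\theta_1,\theta_2]$'' into ``$U(x,R_{\theta_1}\xi)$ is itself large, up to an $O(1)$ loss''. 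The cost is that $|\theta_1-\theta_2|$ must be exponentially small in $T$, i.e.\ polynomially small in $\eps$, which is exactly consistent with the claimed H\"older structure $\eps^{-\beta'}|\theta_1-\theta_2|^\beta$ modulo the $\eps^{\delta}$ error coming from the escape-rate term.
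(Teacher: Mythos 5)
Your argument is correct and uses the same three ingredients as the paper (Lemmas \ref{lem:df}, \ref{lem:dl} and \ref{lem:escape}, a sojourn-time threshold $T\sim -s\ln\eps$, and interpolation of a Lipschitz bound against $|f|\le\pi$ to get the H\"older exponent $\beta$), but the mechanism of the decomposition is genuinely different. The paper inserts a $\theta$-dependent smooth cutoff $\chi_T\bigl(l^1_{\eps,+}(x,R_\theta\xi)+|l^1_{\eps,-}(x,R_\theta\xi)|\bigr)$ into the integrand, so that $\Theta_\eps^{(a),T}$ is honestly $\mathcal{C}^1$ in $\theta$; Lemma \ref{lem:dl} then enters through the derivative of the cutoff, and the remainder $\Theta_\eps^{(b),T}$ is controlled for each fixed $\theta$ by Lemma \ref{lem:escape}. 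You instead use a sharp good/bad splitting depending on the pair $(\theta_1,\theta_2)$ through $\sup_{\theta\in[\theta_1,\theta_2]}U(x,R_\theta\xi)$, and Lemma \ref{lem:dl} enters through a Gronwall bootstrap ($v=e^{-kU}$ satisfies $v'\ge -Ck$) showing that the bad set is contained, up to an $O(1)$ loss in $T$, in the $\theta_1$-slice superlevel set $\{U(x,R_{\theta_1}\xi)\ge T-k^{-1}\ln 2\}$, which rotation invariance and time reversal reduce to $V_\eps$. Your route buys a conceptually cleaner reason why Lemma \ref{lem:dl} is needed, at the price of two extra verifications that you should make explicit: (i) the bootstrap requires $\theta\mapsto U(x,R_\theta\xi)$ to be continuous with values in $[0,+\infty]$ (true by strict convexity of $\{\rho\ge\eps\}$) so that the differential inequality can be propagated across the closed null set of trapped directions where $U=+\infty$ and below the threshold $T_\eps$ where Lemma \ref{lem:dl} is silent — a short continuity/largest-crossing-time argument as you sketch; and (ii) a separate treatment of $|\theta_1-\theta_2|>\eta$, which you do supply. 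Two harmless bookkeeping points: the choice $s=10$ must also satisfy $s/2\ge\wt N$ so that Lemma \ref{lem:escape} applies to $V_\eps(T'/2)$ (taking $s=\max(10,2\wt N+1)$ changes nothing except the value of $\beta'=sk\beta$, which still tends to $0$ with $\beta$), and on the good set the passage from $|\partial_\theta f|$ to $|f(\theta_1)-f(\theta_2)|$ is legitimate because all directions in the interval are untrapped there, so $f$ is $\mathcal{C}^1$ on the whole segment.
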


\begin{proof}
First, remark that it is sufficient to prove the lemma for $\theta_1,\theta_2 \in [0,\pi/2]$, since the result will follow from the $\pi$-symmetry of the homeomorphism $\Theta_\eps$. We fix $\eps > 0$. We introduce the smooth cutoff function $\chi_T$ (for some $T > 0$ which will be chosen to depend on $\eps$ later) such that $\chi_T(s) \equiv 1$ on $[0,T]$ and $\chi_T(s) \equiv 0$ on $[2T,+\infty)$. Note that we can always construct such a $\chi_T$ so that $||\partial_s \chi_T||_{L^{\infty}} \leq 1$ (as long as $T > 1$, which we can assume since it will be chosen growing to infinity as $\eps \rightarrow 0$). We write $\Theta_\eps = \Theta_\eps^{(a), T} + \Theta_\eps^{(b),T}$, where:
\[ \begin{split} \Theta_\eps^{(a),T}(\theta) & := \dfrac{1}{\text{vol}_{g_1}(S^*M^{1}_\eps)} \int_{S^*M^{1}_\eps}  \chi_T\left(l^{1}_{\eps,+}(x,R_\theta\xi) + |l^{1}_{\eps,-}(x,R_\theta\xi)|\right)  f(x,\xi,\theta) d\mu_1(x,\xi) \\
& = \dfrac{1}{\text{vol}_{g_1}(S^*M^{1}_\eps)} \int_{S^*M^{1}_\eps} \psi_T(x,\xi,\theta)  \end{split} \]
where $\psi_T$ is defined to be the integrand and
\[ \Theta_\eps^{(b),T}(\theta) := \Theta_\eps - \Theta_\eps^{(a),T} \]
Morally, the cutoff function mean that we integrate over the compact region
\[ \left\{l^{1}_{\eps,+}(x,R_\theta\xi) + |l^{1}_{\eps,-}(x,R_\theta\xi)| \leq T\right\}\]
By Lebesgue theorem, $\Theta_\eps^{(a),T}$ is $\mathcal{C}^1$ on $[0,\pi/2]$. For $\beta > 0$, $\theta_1, \theta_2 \in [0,\pi/2]$, one has:
\[ |\Theta^{(a),T}_\eps(\theta_1) - \Theta^{(a),T}_\eps(\theta_2)| \lesssim \sup_{\theta \in [0,\pi/2]} \left|\partial_\theta \Theta^{(a),T}_\eps(\theta)\right|^{\beta} |\theta_1-\theta_2|^{\beta} \]
Let us estimate the former derivative. We have:
\[ \partial_\theta \Theta^{(a),T}_\eps(\theta) = \dfrac{1}{\text{vol}_{g_1}(S^*M^{1}_\eps)} \int_{S^*M^{1}_\eps} \partial_\theta \psi_T(x,\xi,\theta) d\mu_1(x,\xi), \]
and the derivative under the integral is composed of a sum of two terms which we now estimate separately.

\begin{enumerate}
\item By Lemma \ref{lem:df}, the first term is bounded by:
\[  \left|\chi_T\left(l^{1}_{\eps,+}(x,R_\theta\xi) + |l^{1}_{\eps,-}(x,R_\theta\xi)|\right) \partial_\theta f(x,\xi,\theta)\right| \lesssim \exp{\left(k(l^{1}_{\eps,+}(x,R_\theta\xi) + |l^{1}_{\eps,-}(x,R_\theta\xi)|)\right)} \lesssim  e^{2kT} \]

\item And the second term is bounded by Lemma \ref{lem:dl}:
\[  \left| \partial_\theta \left(l^{1}_{\eps,+}(x,R_\theta\xi) + |l^{1}_{\eps,-}(x,R_\theta\xi)\right) \partial_s \chi_T\left(l^{1}_{\eps,+}(x,R_\theta\xi) + |l^{1}_{\eps,-}(x,R_\theta\xi)|\right)  f(x,\xi,\theta)\right| \lesssim e^{2kT}\]

\end{enumerate}

Note that the constant $k >0$ may be different from one line to another. Gathering everything, we obtain that for all $\theta \in [0,\pi/2]$, $|\partial_\theta \Theta^{(a),T}_\eps(\theta)| \lesssim e^{2kT}$ and thus:
\[ |\Theta^{(a),T}_\eps(\theta_1) - \Theta^{(a),T}_\eps(\theta_2)| \lesssim e^{2k\beta T} |\theta_1-\theta_2|^{\beta} \]

As to $\Theta^{(b),T}_\eps$, we can write:
\[ \begin{split} \Theta^{(b),T}_\eps(\theta) & \leq \dfrac{1}{\text{vol}_{g_1}(S^*M^{1}_\eps)}\left( \int_{S^*M^{1}_\eps \cap \left\{l^{1,+}_\eps(x,R_\theta \xi) > T\right\}} f d\mu_1 + \int_{S^*M^{1}_\eps \cap \left\{|l^{1,-}_\eps(x,R_\theta \xi)| > T\right\}} f d\mu_1 \right) \end{split} \]
If $T \geq -\wt{N}\ln(\eps)$ ($\wt{N}$ is a large integer defined in Lemma \ref{lem:escape}, independent of $\eps$), then the two integrals can be estimated by Lemma \ref{lem:escape} (note that we here divide by the volume which is bounded by $\mathcal{O}(\eps)$). We obtain:
\[ |\Theta^{(b),T}_\eps(\theta)| \lesssim e^{-\delta T} \eps^{-4\delta} \]
We choose $T := T_\eps = -\wt{N}\ln(\eps)$ and set $\Theta_\eps^{(a)}:=\Theta_\eps^{(a),T_\eps},\Theta_\eps^{(b)}:=\Theta_\eps^{(b),T_\eps}$. Since $\wt{N}$ is taken large enough (greater than $5$ at least to swallow the $\eps^{-4\delta}$), we obtain $||\Theta^{(b)}_\eps||_{L^\infty} \lesssim \eps^{\delta}$. And :
\[ |\Theta^{(a),T}_\eps(\theta_1) - \Theta^{(a),T}_\eps(\theta_2)| \leq \eps^{-2\beta k \wt{N}} |\theta_1-\theta_2|^{\beta}, \]
which provides the sought result by going back to $\Theta_\eps$.
\end{proof}

\subsection{Otal's lemma revisited}

In the spirit of Otal's lemma (see \cite[Lemma 8]{jo}), we prove:

\begin{prop}
\label{prop:ot2}
Assume $\Theta_\eps : [0,\pi] \rightarrow [0,\pi]$ is a family of increasing homeomorphisms for $\eps \in (0,\delta)$ such that:
\begin{enumerate}
\item $\Theta_\eps(0)=0, \Theta_\eps(\pi)=\pi$,
\item For all $\theta \in [0,\pi], \Theta_\eps(\pi-\theta) = \pi-\Theta_\eps(\theta)$,
\item For all $\theta_1,\theta_2\in[0,\pi]$ such that $\theta_1+\theta_2\in[0,\pi]$, 
\[\Theta_\eps(\theta_1)+\Theta_\eps(\theta_2) \leq \Theta_\eps(\theta_1+\theta_2)\]
\item There exists $\alpha > 2\beta'/\beta-1$ ($\beta$ and $\beta'$ are defined afterwards) such that for all family of continuous convex functions $J_\eps : [0,\pi] \rightarrow \R$ such that $||J_\eps||_{L^\infty} = \mathcal{O}(1/\eps^\alpha)$,
\[ \int_0^\pi J_\eps(\Theta_\eps(\theta))\sin(\theta)d\theta \leq \int_0^\pi J_\eps(\theta)\sin(\theta)d\theta + \mathcal{O}(\eps) \]
\item There exists constants $C, \beta, \beta' > 0$ and $\delta > 0$ (independent of $\eps$), such that for all $\theta_1,\theta_2 \in [0,\pi]$:
\[ |\Theta_\eps(\theta_1)-\Theta_\eps(\theta_2)| \leq C\left(\eps^{\delta} + \eps^{-\beta'}|\theta_1-\theta_2|^{\beta}\right) \]
\end{enumerate}
Then $\Theta_\eps = \text{Id} + \mathcal{O}(\eps^{\gamma})$, where we can take any $\gamma$ up to the critical exponent
\[ \hat{\gamma} := \dfrac{1+\alpha-2 \beta'/\beta}{1+2/\beta}, \]
as long as $\gamma < \delta$.
\end{prop}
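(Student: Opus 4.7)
The strategy is a quantitative refinement of Otal's classical argument \cite{jo}, where the limiting case in which (4) is an equality forces $\Theta_\eps=\mathrm{Id}$; here we carefully track the $\mathcal{O}(\eps)$ defect and exploit the Hölder regularity in (5) to obtain a stability bound.

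\textbf{Step 1 (integrated estimates).} For each $s\in(0,\pi)$, apply hypothesis (4) to the convex functions $J_s^{\pm}(\theta):=(\pm(\theta-s))^{+}$, each of sup norm bounded by $\pi$ (so the restriction $\|J_\eps\|_{L^\infty}\lesssim \eps^{-\alpha}$ is trivially satisfied). Compute both $\int_0^\pi J_s^{\pm}(\Theta_\eps(\theta))\sin\theta\,d\theta$ and $\int_0^\pi J_s^{\pm}(\theta)\sin\theta\,d\theta$ by an integration by parts combined with the change of variables $u=\Theta_\eps(\theta)$, using $\Theta_\eps(0)=0$, $\Theta_\eps(\pi)=\pi$. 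The $\pi$-symmetry (2) implies $\int_0^\pi [\cos\theta-\cos\Theta_\eps(\theta)]\,d\Theta_\eps(\theta)=0$. Combining the two one-sided inequalities obtained from $J_s^{+}$ and $J_s^{-}$ with this global identity yields the two-sided estimate
\[
\Bigl|\int_0^{s}\bigl[\cos\theta-\cos\Theta_\eps(\theta)\bigr]\,d\Theta_\eps(\theta)\Bigr|\leq C\eps, \qquad s\in[0,\pi].
\]
A Taylor expansion of $\cos(\theta+R(\theta))$ around $\theta$, with $R(\theta):=\Theta_\eps(\theta)-\theta$, reduces this, up to a quadratic correction, to
\[
\Bigl|\int_0^{s}R(\theta)\sin\theta\,d\theta\Bigr|\leq C\eps+C\|R\|_{L^\infty}^{2},\qquad s\in[0,\pi].
\]

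\textbf{Step 2 (from integrated to pointwise).} By hypothesis (5), $R$ is Hölder of exponent $\beta$ with Hölder constant $\lesssim\eps^{-\beta'}$ modulo an additive $\eps^{\delta}$ error, and the same is true for $\theta\mapsto R(\theta)\sin\theta$. Applying the standard lemma ``a Hölder function with $L^\infty$-small antiderivative is pointwise small,'' i.e.\ writing $R(\theta_0)\sin\theta_0\cdot h$ as an increment of the antiderivative plus a Hölder remainder of size $\lesssim\eps^{-\beta'}h^{1+\beta}$, and optimizing $h$ in the inequality $|R\sin|\lesssim\eps/h+\eps^{-\beta'}h^{\beta}+\eps^\delta$, furnishes an initial pointwise bound $\|R\sin\|_{L^\infty}\lesssim \eps^{\gamma_0}$ for some $\gamma_0>0$.

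\textbf{Step 3 (boundary layer).} On the bulk $\{\sin\theta\geq c\}$, division by $\sin\theta$ gives $|R(\theta)|\lesssim \eps^{\gamma_0}/c$. Near $\theta\in\{0,\pi\}$, where $\sin$ vanishes, use $R(0)=R(\pi)=0$ together with the Hölder estimate (5) directly: $|R(\theta)|\leq C(\eps^{\delta}+\eps^{-\beta'}\theta^{\beta})$ near $\theta=0$, and symmetrically near $\theta=\pi$. Balancing the bulk and boundary-layer estimates provides a first uniform bound $\|R\|_{L^\infty}\lesssim\eps^{\gamma_1}$.

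\textbf{Step 4 (bootstrap to the critical exponent $\hat\gamma$).} To reach $\hat\gamma$, iterate the argument with a sharper family of convex test functions $J_{s,\tau,\eps}$ of sup norm $\sim\eps^{-\alpha}$, permitted by (4). A natural choice is a convex combination of translates of $(\theta-s')^{+}$ weighted by a smooth kernel of width $\tau=\tau(\eps)$, with $\tau$ controlling both the localization near $s$ and the sup norm. Inserting the previously obtained bound on $\|R\|_{L^\infty}$ back into the Taylor expansion (so that the quadratic remainder in Step 1 is upgraded from $\|R\|_{L^\infty}^{2}$ to a term of higher order in $\eps$), and optimizing $\tau$ against the Hölder constant $\eps^{-\beta'}$ and the allowed growth $\eps^{-\alpha}$ of $\|J_\eps\|_{L^\infty}$, yields the critical exponent
\[
\hat\gamma=\frac{1+\alpha-2\beta'/\beta}{1+2/\beta}.
\]
The hypothesis $\alpha>2\beta'/\beta-1$ is exactly the threshold at which this optimization gives $\hat\gamma>0$; and the side condition $\gamma<\delta$ comes from absorbing the additive $\eps^\delta$ Hölder error.

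\textbf{Main obstacle.} The delicate point is Step 4: choosing a family of convex test functions that localizes sharply enough to produce pointwise information, while staying within the sup-norm budget $\eps^{-\alpha}$ and interacting correctly with the one-sided (rather than bilateral) inequality (4). A secondary subtlety is handling the vanishing of $\sin\theta$ at the endpoints, which forces one to split the estimate into a bulk region (controlled via the integrated inequality) and a boundary layer (controlled directly by (5) and the normalizations $\Theta_\eps(0)=0$, $\Theta_\eps(\pi)=\pi$), and to balance the two.
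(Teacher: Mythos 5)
There is a genuine gap, and it is structural: nowhere in Steps 1--4 do you use the superadditivity hypothesis (3), and the conclusion is false without it. Take $\Theta_\eps(\theta) := \theta + c\sin(2\theta)$ for a fixed $0<c<1/2$: this is an increasing, $\pi$-symmetric homeomorphism fixing $0$ and $\pi$, it is $\geq \mathrm{Id}$ on $[0,\pi/2]$ and $\leq\mathrm{Id}$ on $[\pi/2,\pi]$, hence pushes the measure $\sin\theta\,d\theta$ toward its barycenter $\pi/2$; a direct check on hinge functions $(\,\cdot-t)^{+}$ and $(t-\cdot)^{+}$ (the means being equal by symmetry) shows $\int_0^\pi J(\Theta_\eps(\theta))\sin\theta\,d\theta\leq\int_0^\pi J(\theta)\sin\theta\,d\theta$ for \emph{every} convex $J$, so (4) holds with zero error, and (5) holds with $\beta=1$, $\beta'=0$. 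Yet $\Theta_\eps\not\to\mathrm{Id}$ (and indeed (3) fails for it at $\theta_1=\theta_2=\pi/4$). The same defect surfaces concretely in your Step 1: hypothesis (4) is one-sided, and after integration by parts the two test functions $J_s^{+}$ and $J_s^{-}$, once combined with the global identity $\int_0^\pi\cos\theta\,d\Theta_\eps(\theta)=0$ furnished by $\pi$-symmetry, produce the \emph{same} inequality $\int_0^{s}\bigl[\cos(\Theta_\eps^{-1}(u))-\cos u\bigr]du\geq -C\eps$, not its reverse. So the claimed two-sided integrated estimate is not established, and Steps 2--4 have nothing to stand on. The missing direction of that inequality is exactly what superadditivity must supply. (Step 4 is in any case only a sketch: the localization/sup-norm optimization that is supposed to produce the precise exponent $\hat\gamma$ is not carried out.)

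For comparison, the paper argues by contradiction: assuming $\|\Theta_{\eps_n}-\mathrm{Id}\|_{L^\infty}>n\eps_n^{\gamma}$, it uses $\pi$-symmetry to place a maximal interval $[c_n,C_n]$ on which $\Theta_n<\mathrm{Id}$ inside $[0,\pi/2]$, with nested subintervals where the defect exceeds $\eps_n^{\gamma}$ and $n\eps_n^{\gamma}$; superadditivity translates these bounds to the interval $[0,C_n-c_n]$ anchored at the origin; the H\"older hypothesis (5) gives lower bounds on the lengths of the subintervals; and a \emph{single} scaled hinge function $J_n=\eps_n^{-\alpha}\sup(C_n-c_n-\cdot,0)$ in (4) --- where the sup-norm budget is spent on shrinking the additive error to $\eps_n^{1+\alpha}$, not on localization --- yields the contradiction and the exponent $\hat\gamma$. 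To salvage a direct argument along your lines you would at a minimum have to inject superadditivity into Step 1 to obtain the reverse integrated inequality; as written, the proof does not go through.
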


\begin{proof}[Proof]
We argue by contradiction. Assume there there exists a sequence $\eps_n \rightarrow 0$ such that $||\Theta_n-\text{Id}||_{L^\infty} > n \eps_n^{\gamma}$ (where $\Theta_n := \Theta_{\eps_n}$). By $\pi$-symmetry, there exists an interval $[a_n, A_n]$ such that for all $\theta \in (a_n, A_n)$, $\Theta_n(\theta) < \theta - n\eps_n^{\gamma}$ and we can choose $\Theta_n(a_n) = a_n - n\eps_n^{\gamma}, \Theta_n(A_n)=A_n-n\eps_n^{\gamma}$.

We also construct the largest interval $[b_n,B_n] \supset [a_n,A_n]$ such that for all $\theta \in (b_n,B_n)$, $\Theta_n(\theta) < \theta - \eps_n^{\gamma}$ and $\Theta_n(b_n) = b_n-\eps_n^{\gamma}, \Theta_n(B_n)=B_n-\eps_n^{\gamma}$. Eventually, we define the largest interval $[c_n,C_n] \supset [b_n,B_n]$ such that for all $\theta \in (c_n,C_n)$, $\Theta_n(\theta) < \theta$ and $\Theta_n(c_n)=c_n, \Theta_n(C_n)=C_n$. The $\pi$-symmetry implies that $\Theta(\pi/2) = \pi/2$ and since $\Theta(0)=0, \Theta(\pi)=\pi$, we know that the points $c_n < b_n < a_n < A_n < B_n < C_n$ all lie either in $[0,\pi/2]$ or in $[\pi/2,\pi]$.

\begin{figure}[h!]
\begin{center}

\includegraphics[scale=0.8]{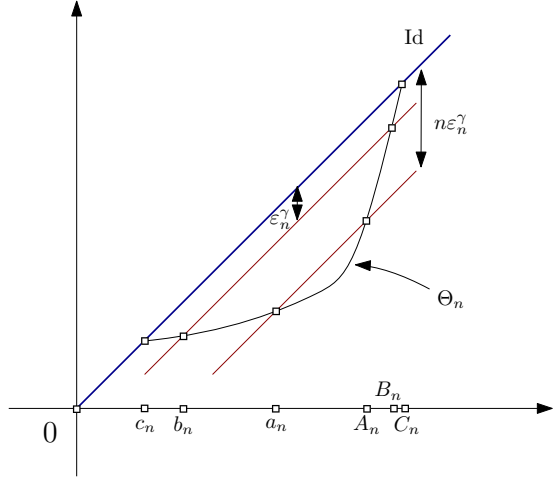} 
\caption{The points $c_n < b_n < a_n < A_n < B_n < C_n$}

\end{center}
\end{figure}

Remark that $\Theta_n-\text{Id}$ also satisfies the fifth item, namely:
\[ \begin{split} |(\Theta_n-\text{Id})(\theta_1) - (\Theta_n-\text{Id})(\theta_2)| & \lesssim |\Theta_n(\theta_1)-\Theta_n(\theta_2)| + |\theta_1-\theta_2| \\
& \lesssim \left(\eps_n^{\delta} + \dfrac{1}{\eps_n^{\beta'}}|\theta_1-\theta_2|^{\beta}\right) + (2\pi)^{1-\beta}|\theta_1-\theta_2|^{\beta} \\
& \lesssim \eps_n^{\delta} + \dfrac{1}{\eps_n^{\beta'}}|\theta_1-\theta_2|^{\beta}
\end{split} \]
This implies that:
\[ |(\Theta_n-\text{Id})(a_n)-(\Theta_n-\text{Id})(b_n)| = (n-1)\eps_n^{\gamma} \lesssim \eps_n^{\delta} + \dfrac{1}{\eps_n^{\beta'}}(a_n-b_n)^{\beta} \]
Thus:
\[ (a_n-b_n)^{\beta} \gtrsim (n-1)\eps_n^{\gamma+\beta'}-\eps_n^{\delta+\beta'} \gtrsim (n-1)\eps_n^{\gamma+\beta'}, \]
for $n$ large enough since $\delta > \gamma$. The same inequalities hold for the other points and we get, for $n$ large enough:
\[ a_n-b_n \gtrsim (n-1)^{1/\beta}\eps^{(\gamma+\beta')/\beta}_n, ~~~~ B_n - A_n \gtrsim (n-1)^{1/\beta}\eps^{(\gamma+\beta')/\beta}_n\]
\[ b_n-c_n \gtrsim \eps^{(\gamma+\beta')/\beta}_n, ~~~~C_n-B_n \gtrsim \eps^{(\gamma+\beta')/\beta}_n \]
Now, for $h \in (0,C_n-c_n)$, by superadditivity:
\[ c_n + h > \Theta_n(c_n+h) \geq \Theta_n(c_n) + \Theta_n(h) = c_n + \Theta_n(h), \]
that is $\Theta_n(h) < h$. In the same fashion, we have for $h \in (b_n-c_n,B_n-c_n), \Theta_n(h) < h-\eps_n^{\gamma}$.

Let us now consider the continuous convex functions $J_n(x):=\eps_n^{-\alpha} \sup(C_n-c_n-x,0)=\eps_n^{-\alpha}\wt{J}_n(x)$ on $[0,\pi]$. Using:
\[ \int_0^\pi \wt{J}_n(\Theta_n(\theta))\sin(\theta)d\theta \leq \int_0^\pi \wt{J}_n(\theta)\sin(\theta)d\theta + C\eps^{1+\alpha}_n, \]
where $C > 0$ is a constant independent of $n$, we obtain:
\[ \begin{split} 0 & \leq \int_0^{C_n-c_n} (\Theta_n(\theta)-\theta)\sin(\theta)d\theta + C \eps^{1+\alpha}_n \\
& = \int_0^{b_n-c_n} (\Theta(\theta)-\theta)\sin(\theta)d\theta + \int_{b_n-c_n}^{B_n-c_n} " + \int_{B_n-c_n}^{C_n-c_n} " + C\eps^{1+\alpha}_n  \\ 
& < C\eps^{1+\alpha}_n - \eps_n^{\gamma} \int_{b_n-c_n}^{B_n-c_n} \sin(\theta)d\theta, \end{split} \]
where we used the bounds stated above and the fact that both $b_n-c_n$ and $B_n-c_n$ are in $[0,\pi/2]$. But remark that:
\[\begin{split} \int_{b_n-c_n}^{B_n-c_n} \sin(\theta)d\theta & \geq  \left((B_n-c_n)-(b_n-c_n)\right)\sin(b_n-c_n) \\
& \geq C'(n-1)^{1/\beta}\eps_n^{2(\gamma+\beta')/\beta},\end{split} \]
for some constant $C' > 0$, by inserting the previous bounds and using the inequality $\sin(x)\geq 2x/\pi$ on $[0,\pi/2]$. Thus, we obtain:
\[ 0 < \eps_n^{1+\alpha}\left(C-C'(n-1)^{1/\beta}\eps_n^{(2/\beta +1)\gamma + 2\beta'/\beta-1-\alpha}\right), \]
and $(2/\beta +1)\gamma + 2\beta'/\beta-1-\alpha \leq 0$ by the definition of $\gamma$, so the right-hand side is negative as $n$ goes to infinity.
\end{proof}

\begin{rema} Let us mention that the result is still valid in the limit $\delta = +\infty, \beta=1,\beta'=0$ (the $\Theta_\eps$ are uniformly Lipschitz) and $\alpha=0$. It provides an exponent $\gamma=1/3$. Had we been able to prove a priori that the family $\Theta_\eps$ was uniformly Lipschitz, this would have been enough to conclude. \end{rema}

\section{End of the proof}

\label{sect:end}

We can now conclude the proof.

\begin{proof}[Proof] 

Combining Lemmas \ref{lem:convbien}, \ref{lem:esttheta} and Proposition \ref{prop:ot2}, we conclude that $\Theta_\eps = \text{Id}+\mathcal{O}(\eps^N)$, for some $N$ which we can choose large enough. Thus for $\theta_1,\theta_2 \in [0,\pi]$ such that $\theta_1+\theta_2\in[0,\pi]$:
\[ \begin{split} 0  \leq \dfrac{1}{\text{vol}(S^*\wt{M}^1_\eps)} & \int_{S^*\wt{M}^1_\eps} f(x,\xi,\theta_1+\theta_2) - f(x,\xi,\theta_1) - f(x,R_{\theta_1}\xi,\theta_2) ~ d\mu_1(x,\xi)  \\
&= \Theta_\eps(\theta_1+\theta_2)-\Theta_\eps(\theta_1)-\Theta_\eps(\theta_2) \\
& = \mathcal{O}(\eps^N) \end{split} \]
Since the integrand is positive and the inverse of the volume can be estimated by $\mathcal{O}(\eps)$, this implies by taking $\eps\rightarrow 0$ that 
\[ f(x,\xi,\theta_1+\theta_2) - f(x,\xi,\theta_1) - f(x,R_{\theta_1}\xi,\theta_2) = 0 \]
so the inequality is saturated in Gauss-Bonnet formula. As a consequence, three intersecting $g_1$-geodesics correspond to three intersecting $g_2$-geodesics with same endpoints.

We can now construct the isometry $\Phi$ between $(M,g_1)$ and $(M,g_2)$. We will use in this paragraph the notation $\wt{\cdot}$ to refer to objects considered on the universal cover $\wt{M}$. Given $p \in \wt{\overline{M}}$, we choose three $g_1$-geodesics $\alpha$, $\beta$ and $\gamma$ passing through $p$ with respective endpoints $(x,x')$, $(y,y')$ and $(z,z')$ in $\partial \wt{M} \times \partial \wt{M}$. By the previous section, we know that the $g_2$-geodesics with same endpoints meet in a single point which we define to be $\wt{\Phi}(p)$. Now, $\wt{\Phi}(p)$ is well-defined (it does not depend on the choice of the geodesics) and remark that for $(x,\xi) \notin \wt{\overline{\Gamma}}_- \cup \wt{\overline{\Gamma}}_+$ (such a covector always exist) and $\theta$ such that $(x,R_\theta \xi) \notin \wt{\overline{\Gamma}}_- \cup \wt{\overline{\Gamma}}_+$, we have $\wt{\Phi}(p) = \mathrm{x}(x,\xi,\theta)$, where $\mathrm{x}$ is defined in (\ref{eq:kappa}) (in other words, $\kappa$ maps fibers to fibers). Thus $\wt{\Phi}$ is $\mathcal{C}^\infty$ in the interior (see Section \ref{ssect:kappa}) and extends continuously down to the boundary as $\wt{\Phi}|_{\partial \wt{M}}=\text{Id}$.

Moreover, $\wt{\Phi}^*(\wt{g}_2)=\wt{g}_1$. Indeed, it is sufficient to prove that $\wt{\Phi}$ preserves the distance. Given $p,q \in \wt{M}$, we have $\wt{\mathcal{F}}_1(p,q) = \wt{\mathcal{F}}_2(\wt{\Phi}(p),\wt{\Phi}(q))$ and thus:
\[ d_{\wt{g}_1}(p,q)=\dfrac{1}{2}\eta_{\wt{g}_1}\left(\wt{\mathcal{F}}_1(p,q)\right) = \dfrac{1}{2}\eta_{\wt{g}_2}\left(\wt{\mathcal{F}}_2(\wt{\Phi}(p),\wt{\Phi}(q))\right)=d_{\wt{g}_2}(\wt{\Phi}(p),\wt{\Phi}(q)) \]

Now, observe that $\wt{\Phi}$ is invariant by the action of the fundamental group: it thus descends to a smooth diffeomorphism $\Phi : M \rightarrow M$ which extends continuously down to the boundary and satisfies $\Phi^*g_2=g_1$.

We now conclude the argument by proving that $\Phi$ is actually smooth on $\overline{M}$. In the compact setting, it is a classical fact that an isometry which is at least differentiable is actually smooth and our argument somehow follows the idea of proof of this statement. More precisely, we show that a smooth isometry on an asymptotically hyperbolic manifold actually extends as a smooth application on the compactification $\overline{M}$. The proof does not rely on the dimension two. Note that another proof could be given in this case using the fact that $\Phi$ is a conformal map.

Consider a fixed point $p \in M$ in a vicinity of the boundary. For any point $q \in \overline{M}$ in a vicinity of $p$, we denote by $\xi(q)$ the unique covector such that $w(q):=(p,\xi(q))$ generates the geodesic joining $p$ to $q$. The map $q \mapsto \xi(q)$ is smooth down to the boundary by \cite[Proposition 5.13]{ggsu}. Let us denote by $\tau_1(q)$ the time such that $q=\pi_0\left(\overline{\varphi}^1_{\tau_1(q)}(w(q))\right)$. It is smooth down to the boundary too. Since $\Phi$ conjugates the two geodesic flows, we can write:
\[ \Phi(q) = \pi_0\left(\overline{\varphi}^2_{\tau_2(q)}(z(q))\right), \]
where $z(q):=(\Phi(p),d\Phi_p(\xi(q)))$, for some time $\tau_2(q)$. All is left to prove, is thus that $\tau_2$ is smooth down to the boundary. If $t(q)$ denotes the $g_1$-geodesic distance between $p$ and $q$ (which is also that between $\Phi(p)$ and $\Phi(q)$ for $g_2$), one has:
\[ t(q) = \int_0^{\tau_1(q)} \dfrac{ds}{\rho(\overline{\varphi}^1_s(p,\xi(q)))} = -\ln\left(1-\dfrac{\tau_1(q)}{\tau^1_+(w(q))}\right) + G(\tau_1(q),w(q)), \]
for some smooth function $(\tau,z) \mapsto G(\tau,z)$ down to the boundary (this is a computation similar to the one carried out in Section \ref{sssect:def}, see also \cite[Lemma 2.7]{ggsu}). And:
\[ \tau_2(q) = \tau^{2}_+(z(q))-e^{-t(q)}\tau^{2}_+(z(q))H(e^{-t},z(q)), \]
for some smooth positive function $H$ on $[0,1) \times \overline{S^*M} \setminus \left(\partial_-S^*M\cup\Gamma_-\right)$ (this stems from the previous equality, or see also \cite[Lemma 2.7]{ggsu}). As a consequence:
\[ \tau_2(q) = \tau^{2}_+(z(q)) - \left(1-\dfrac{\tau_1(q)}{\tau^1_+(w(q))}\right)I(q), \]
for some smooth function $I$ down to the boundary, which can be expressed in terms of $H$ and $G$. This concludes the proof.

\end{proof}

\end{document}